\title{The Combinatorial Game Theory of Well-tempered Scoring Games}
\author{Will Johnson}
\newcommand{\lout}{\operatorname{L}}
\newcommand{\rout}{\operatorname{R}}
\newcommand{\lfout}{\operatorname{Lf}}
\newcommand{\rfout}{\operatorname{Rf}}
\newcommand{\gap}{\operatorname{gap}}
\newcommand{\even}{\operatorname{even}}
\newcommand{\modforreal}{\operatorname{mod}}
\newtheorem{theorem}{Theorem}[section] 
\newtheorem{lemma}[theorem]{Lemma}
\newtheorem{claim}[theorem]{Claim}
\newtheorem{definition}[theorem]{Definition}
\newtheorem{corollary}[theorem]{Corollary}
\newtheorem{fact}[theorem]{Fact}
\begin{document}
\maketitle

\begin{abstract}
We consider the class of ``well-tempered'' integer-valued scoring games, which have the property that the parity of the length of the game is independent of the line of play.  We consider disjunctive sums of these games, and develop a theory for them analogous to the standard theory of disjunctive sums of normal-play partizan games.  We show that the monoid of well-tempered scoring games modulo indistinguishability is cancellative but not a group, and we describe its structure in terms of the group of normal-play partizan games.  We also classify Boolean-valued well-tempered scoring games, showing that there are exactly seventy, up to equivalence.
\end{abstract}

\section{Introduction}
The standard theory of combinatorial game theory focuses on partizan games played under the \emph{normal play rule}.  In these games, two players
take turns moving until one player is unable to.  This player loses.  A substantial amount of theoretical work has gone into these games, as evidenced by
the books \cite{ONAG}, \cite{WinWays}, \cite{GONC} and \cite{MGONC}.  In contrast, the earliest papers on partizan combinatorial games, by Milnor~\cite{Milnor}
and Hanner~\cite{Hanner}, focused on what are now called \emph{scoring games}, which include games like \textsc{Go}
and \textsc{Dots-and-Boxes} to varying degrees.
A scoring game specifies an arbitrary numerical score depending on the ending position of the game, rather than on the identity of the final player.
One player seeks to maximize this score while the other seeks to minimize it.

The papers of Milnor and Hanner focused on scoring games having the property that there is always an incentive to move.
In other words, players would always prefer moving to passing.  With this assumption,
games modulo equivalence form a nice abelian group, one that is closely related to the normal-play partizan games.  This assumption may seem limiting,
but it is natural in games like Go, where there is no compulsion to move.

Later work by Ettinger~\cite{Ettinger1} and \cite{Ettinger2}
considered the general class of scoring games.  In this case, games modulo equivalence form a commutative monoid,
but not a group.  Ettinger was interested in the question of whether this monoid was cancellative, but he seems to have reached no definite conclusion.

The present paper focuses on a limited class of scoring games, which we call \emph{well-tempered} because they are somewhat analogous to the well-tempered
games of Grossman and Siegel~\cite{GrossmanSiegel}.
A well-tempered scoring game is one in which the last player to move is predetermined, and does not depend
on the course of the game.  In an odd-tempered game, the first player to move will also be the last, while in an even-tempered game,
the first player will never be the last.  These are exactly the scoring games which are trivial when played as partizan games
by the normal-play rule. Nevertheless, we show that our class of games,
like the class considered by Milnor, is closely related to the standard theory of partizan normal-play games.

Our original motivation was the game \textsc{To Knot or Not to Knot} of \cite{KnotGames}.
Few other games seem to fit into this framework, so we have invented a few.
We refer the reader to \S \ref{sec:Examples} for examples.

The outline of this paper is as follows.  In Section \ref{sec:Definitions},
we define precisely what we mean by a well-tempered scoring game, how we add them, and what it means for two to be equivalent.
In \S \ref{sec:basic}, we prove basic facts with these notions, showing that a certain class of special games (those having the property
of Milnor and Hanner at even levels) form a well-behaved abelian group.  To a general game $G$, we can associate two special games $G^+$ and
$G^-$, which characterize $G$.  In fact, we show that a general game $G$ has a double identity, acting as $G^+$ when Left is the final player,
and as $G^-$ when Right is the final player.  In Section \ref{sec:distortions}, we consider variants of disjunctive addition.  Specifically, we vary
the manner in which the final scores of the summands are combined into a total score.  We provide additional motivation for our earlier definition
of equivalence, generalize the results of \S\ref{sec:basic}, and characterize the pairs $(G^+,G^-)$ which can occur.  In \S\ref{sec:psi}, we show
how our class of games is closely related to and characterized by the standard theory of normal-play partizan games.  We use this correspondence
to give canonical forms for (special) games in \S\ref{sec:canonical}.  In \S\ref{sec:boolean} we discuss the theory of $\{0,1\}$-valued games,
which is necessary to analyze games like \textsc{To Knot or Not to Knot}.  We show that there are seventy $\{0,1\}$-valued games modulo equivalence,
but infinitely many three-valued games.  We close in \S~\ref{sec:Examples} with some examples of well-tempered scoring games, including small dictionaries for them.

We assume the reader is familiar with the basic notations and conventions of \emph{Winning Ways}~\cite{WinWays} chapters 1-2,
or \emph{On Numbers and Games}~\cite{ONAG} chapters 7-10.
We assume that the reader is capable of making basic inductive arguments of the sort found in chapter 1 of \emph{ONAG}.  We do not assume
that the reader is familiar with thermography, atomic weights, Norton multiplication, or the theory of loopy games, though we mention them occasionally in passing.
We will refer to the normal-play partizan games of the standard theory as ``partizan games.''  We will use $\lhd$ and $\rhd$ for the relations of ``less than
or fuzzy to'' and ``greater than or fuzzy to.''  For partizan games, $=$ and $\equiv$ will denote equivalence and identity, but for scoring games we will
use $\approx$ and $=$, respectively.

\section{Definitions}\label{sec:Definitions}
We are solely concerned with the following class of scoring games:
\begin{definition}\label{def:wtsg}
An \emph{even-tempered game} is an integer or a pair $\{G^L|G^R\}$ where $\{G^L\}$ and $\{G^R\}$ are finite nonempty sets
of odd-tempered games.\footnote{We are using the standard notational abbreviations of combinatorial game theory, where
$G^L$ and $G^R$ are variables ranging over the left and right options of a game $G$.  We will use this kind of abbreviation later
in Definitions~\ref{def:disj} and \ref{def:neg} to save space.}  An \emph{odd-tempered game} is a pair $\{G^L|G^R\}$ where $\{G^L\}$ and $\{G^R\}$ are finite nonempty
sets of even-tempered games.  A \emph{well-tempered game} is an even-tempered game or an odd-tempered game.
\end{definition}
We call the elements of $\{G^L\}$ and $\{G^R\}$ the \emph{left and right options} of $G$.  If $G$ is an integer, we define its options
to be the elements of $\emptyset$.  If $G$ is a well-tempered game, we let $\pi(G) = 0$ if $G$ is even-tempered and $\pi(G) = 1$ if $G$ odd-tempered.


In what follows, all ``games'' will be well-tempered scoring games, unless specified otherwise.  
We view $\{G^L|G^R\}$ as a game between two players, Left and Right.  Left can move to any left option $G^L$ and Right can move to any right
option $G^R$.  Once the game reaches an integer, this value becomes the final score.  We assume the Left is maximizing the score while Right is minimizing it.

For example, the game $\{0|1\}$ is an odd-tempered game that will last exactly one turn.  If Left goes first, she will get 0 points, but if Right goes first,
Left will get 1 point.  On the other hand, in $\{2|-2\}$, whichever player goes first will get two points.  The odd-tempered game $\{2,3|\{1|1||2|2\}\}
 = \{2,3|\{\{1|1\}|\{2|2\}\}\}$ will last one
turn if Left goes first, or three turns if Right goes first.  Left can choose to either receive 2 or 3 points.  If Right goes first, the final score will
necessarily be 1, because play will proceed
\[ \{2,3|\{1|1||2|2\}\} \to \{1|1||2|2\} \to \{1|1\} \to 1\]
since the players move in alternation.

With these conventions, it is natural to define
\begin{definition}
The \emph{left outcome} and \emph{right outcome} of $G$, denoted $\lout(G)$ and $\rout(G)$, respectively, are recursively defined as follows:
\[
\lout(G) = \begin{cases}
G & \text{ if $G$ is an integer} \\
\max_{G^L} \rout(G^L) \text{ otherwise }
\end{cases}\]
\[
\rout(G) = \begin{cases}
G & \text{ if $G$ is an integer} \\
\min_{G^R} \lout(G^R) \text{ otherwise }
\end{cases}\]
Here $\max_{G^L} \rout(G^L)$ denotes the maximum value of $\rout(G^L)$ as $G^L$ ranges over the left options of $G$, and similarly
$\min_{G^R} \lout(G^R)$ is the minimum as $G^R$ ranges over the right options of $G$.

We define the \emph{outcome} of $G$ to be the pair $(\lout(G),\rout(G))$.
\end{definition}

The left and right outcomes of $G$ are the scores that result under perfect play when Left or Right moves \emph{first}.
In a well-tempered game, it also makes sense to discuss the player who moves \emph{last}, so we can make the following definition:
\begin{definition}
The \emph{left final outcome} of $G$, denoted $\lfout(G)$, is $\lout(G)$ when $G$ is odd and $\rout(G)$ when $G$ is even.  Similarly,
the \emph{right final outcome} of $G$, denoted $\rfout(G)$, is $\rout(G)$ when $G$ is odd and $\lout(G)$ when $G$ is even.
\end{definition}
In other words, the left or right final outcome of $G$ is the outcome when Left or Right is the \emph{final} player, rather than the \emph{first} player
for ordinary outcomes.  Note that $(\lout(G),\rout(G),\pi(G))$ carries the same information as $(\lfout(G),\rfout(G),\pi(G))$.
\begin{definition}
Let $G$ be a well-tempered scoring game.  A \emph{subgame of $G$} is $G$ or a subgame of an option of $G$.
If $S$ is a set of numbers, we say that $G$ \emph{takes values in $S$} or is \emph{$S$-valued} if every subgame of $G$
that is an integer is an element of $S$.  We denote the class of $S$-valued well-tempered scoring games $\mathcal{W}_S$.
\end{definition}
Note that $\mathcal{W}_\mathbb{Z}$ is the class of all well-tempered scoring games.  Also, every game has finitely many subgames,
and in particular is in $\mathcal{W}_S$ for some finite $S$.

The basic operation we would like to apply to our games is the disjunctive sum, in which we play two games in parallel, and add the final scores.
We will first define a slightly more general operation.
\begin{definition}\label{def:disj}
Let $S_1, S_2 \subseteq f$, and let $f : S_1 \times S_2 \to \mathbb{Z}$ be a function.  Then let \emph{the extension of $f$ to games} be the
function $\tilde{f} : \mathcal{W}_{S_1} \times \mathcal{W}_{S_2} \to \mathcal{W}_{\mathbb{Z}}$ defined recursively by
\[ \tilde{f}(G_1,G_2) =
\begin{cases}
f(G_1,G_2) & \text{ if $G_1$ and $G_2$ are integers} \\
\{\tilde{f}(G_1^L,G_2),\tilde{f}(G_1,G_2^L) | \tilde{f}(G_1^R,G_2), \tilde{f}(G_1,G_2^R)\} & \text{ otherwise}
\end{cases}\]
\end{definition}

The game $\tilde{f}(G,H)$ is obtained by playing $G$ and $H$ in parallel, and combining the final scores using $f$.  At each turn, the current player
chooses one of $G$ or $H$ to move in.  So in some sense, $\tilde{f}$ is like the disjunctive sum of normal-play partizan games.
In the case where $S_1, S_2 = \mathbb{Z}$ and $f(x,y) = x+y$, we denote $\tilde{f}(G,H)$ by $G + H$.
We call this the \emph{disjunctive sum} of the scoring games $G$ and $H$.

Similarly, if $f : S_1 \times S_2 \times \cdots S_n \to \mathbb{Z}$ is a function, then there is an extension to games
\[ \tilde{f} : \mathcal{W}_{S_1} \times \cdots \times \mathcal{W}_{S_n} \to \mathcal{W}_{\mathbb{Z}}\]
Note that $\tilde{f}(G_1,\ldots,G_n)$ is indeed a well-tempered game, with parity given as follows:
\[ \pi(\tilde{f}(G_1,\ldots,G_n)) \equiv \pi(G_1) + \cdots + \pi(G_n) \quad (\mod 2).\]
So in particular, $\pi(G + H) \equiv \pi(G) + \pi(H)$ (mod 2).

We mainly care about $f$ which are order-preserving in the following sense:
\begin{definition}
Let $f : S_1 \times \cdots \times S_n \to \mathbb{Z}$ be a function.  Then we say that $f$ is \emph{order-preserving} if
\[ f(x_1',\ldots,x_n') \ge f(x_1,\ldots,x_n)\]
whenever $x_i' \ge x_i$ for all $i$.
If $f$ is order-preserving, we call the extension to games $\tilde{f}$ a \emph{generalized disjunctive operation}.
\end{definition}

Another basic operation on games is negation, which reverses the scores and interchanges the roles of the players:
\begin{definition}\label{def:neg}
If $G$ is an integer, let $-G$ be the negative of $G$ in the usual sense.  Otherwise, define $-G$ by
\[ -G = \{-G^R|-G^L\}.\]
\end{definition}
We let $G - H$ denote $G + (-H)$.

The main obstacle to directly applying classic combinatorial game theory to scoring games is that a game like $\{-1|1\}$ would be
equivalent to 0 in the standard theory, but this fails in the theory of scoring games.  The problem centers around games that are ``cold'' in the sense
that there is no incentive to move.  We can measure this coldness as follows:
\begin{definition}
Let $G$ be a well-tempered game.  For $i = 0,1$, let $\gap_i(G)$ be the supremum
of $\rout(H) - \lout(H)$ as $H$ ranges over the subgames of $G$ with $\pi(H) = i$.
We call $\gap_0(G)$ the \emph{even gap} of $G$ and $\gap_1(G)$ the \emph{odd gap} of $G$.
\end{definition}
Note that if $G$ is an integer, $\gap_0(G) = 0$ and $\gap_1(G) = -\infty$.  Since every game has at least one subgame that is an integer,
$\gap_0(G) \ge 0$ for every $G$.  Moreover, every game that isn't an integer satisfies $\gap_1(G) > -\infty$. Also note that if
$H$ is a subgame of $G$, then $\gap_i(H) \le \gap_i(G)$.

Another useful operation on games is heating.
\begin{definition}
If $G$ is a game and $t$ is an integer, then $\int^t G$ (\emph{$G$ heated by $t$}) is the game defined inductively by
\[ \int^t G = \begin{cases}
G & \text{ if } G \in \mathbb{Z} \\
\left\{t + \int^t G^L | -t + \int^t G^R \right\} & \text{ otherwise}
\end{cases}\]
\end{definition}
The effect of this operation is to give every player a bonus of $t$ points for each move she makes.  We allow $t$ to be negative,
in which case we refer to this operation as \emph{cooling}.  Note that our cooling operation is much simpler than the cooling
operation of the standard theory.

One of the central ideas behind combinatorial game theory is to consider games modulo equivalence.  For us, the appropriate definition
of equivalence is the following:
\begin{definition}
Let $G$, $H$ be games.  We say that $G$ and $H$ are \emph{equivalent}, denoted $G \approx H$, if $\lout(G + X) = \lout(H + X)$
and $\rout(G + X) = \rout(H + X)$ for every game $X$.  Similarly, we say that $G \gtrsim H$ if $\lout(G + X) \ge \lout(H + X)$
and $\rout(G + X) \ge \rout(H + X)$ for every game $X$.  We define $G \lesssim H \iff H \gtrsim G$.
\end{definition}
The relation $\approx$ is clearly an equivalence relation, and $\gtrsim$ is a preorder which induces a partial order on $\mathcal{W}_\mathbb{Z}/\approx$.



To motivate Definition~\ref{def:half-eq}, we need a basic fact:
\begin{theorem}\label{parity-incomparable}
Let $G, H$ be games.  If $G \lesssim H$, then $\pi(G) = \pi(H)$.
\end{theorem}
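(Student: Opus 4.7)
The plan is to prove the contrapositive: assuming $\pi(G) \neq \pi(H)$, construct a single game $X$ witnessing $G \not\lesssim H$. Without loss of generality, take $\pi(G) = 0$ and $\pi(H) = 1$.

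For a large integer $M$, let $X_M = \{-M \mid M\}$, an odd-tempered ``cold switch''. In $X_M$ both players would rather not move: Left's only move yields $-M$ and Right's yields $+M$. The intuition is that in a disjunctive sum $G + X_M$, optimal play exhausts the $G$-component before anyone moves in $X_M$; once $G$ has been reduced to an integer, the player whose turn it is then is stuck taking the $\pm M$ penalty. Since any line of play in a well-tempered $G$ terminates after a number of moves of parity $\pi(G)$, the identity of that unlucky player is pinned down by $\pi(G)$.

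The technical content is an inductive lemma: for every $G$ and all sufficiently large $M$,
\[
\lout(G + X_M) = f_a(G) + \varepsilon(G)\, M, \qquad \rout(G + X_M) = f_b(G) - \varepsilon(G)\, M,
\]
where $\varepsilon(G) = -1$ for $\pi(G) = 0$ and $\varepsilon(G) = +1$ for $\pi(G) = 1$, and $f_a(G), f_b(G)$ are independent of $M$. The base case ($G$ an integer $n$) is immediate from $n + X_M = \{n - M \mid n + M\}$. The inductive step unfolds the recursive definitions of $\lout, \rout$ at $G + X_M$: each term $\rout(G^L + X_M)$ or $\lout(G^R + X_M)$ has the opposite $M$-sign from $G$ by the hypothesis (since $\pi(G^L) = \pi(G^R) = 1 - \pi(G)$), while the ``cowardly'' options $G + X_M^L = G - M$ and $G + X_M^R = G + M$ contribute $\rout(G) - M$ and $\lout(G) + M$ respectively; for $M$ large, these either combine coherently with the induction-provided terms or are strictly dominated by them, so the $\max$/$\min$ over options collapses into the predicted affine function of $M$.

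Given the lemma, the proof finishes immediately: since $\varepsilon(G) = -1$ and $\varepsilon(H) = +1$,
\[
\rout(G + X_M) - \rout(H + X_M) = f_b(G) - f_b(H) + 2M,
\]
which is positive for $M$ sufficiently large, contradicting $\rout(G + X_M) \leq \rout(H + X_M)$ and hence $G \lesssim H$. The main technical obstacle is the simultaneous induction on $\lout$ and $\rout$ in the lemma: each recursion invokes the other at options, so both quantities must be tracked in tandem, and $M$ must be chosen uniformly larger than the (finitely many) thresholds required for subgames of $G$ and $H$.
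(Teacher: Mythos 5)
Your proposal is correct and uses the same test game $\{-M\mid M\}$ and the same core idea as the paper — namely, that in $G + \{-M\mid M\}$ the player who moves last is forced to take the $\pm M$ penalty, so the sign of the $M$-dependence of the outcome is determined by $\pi(G)$. The one place you diverge is in rigor: the paper simply asserts the two limits $\lfout(K + \{-N\mid N\}) \to -\infty$ and $\rfout(K + \{-N\mid N\}) \to +\infty$ with an informal strategic justification, whereas you prove the needed growth precisely by a simultaneous induction on $\lout$ and $\rout$, tracking the affine dependence on $M$. That inductive lemma is exactly what is hiding behind the paper's ``clearly''; your version is a fully rigorous rendering of the same argument rather than a genuinely different route.
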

\begin{proof}
Let $K$ be any game, and consider the game $K + \{-N|N\}$ for $N$ a large integer.  Clearly, neither player wants
to move in $\{-N|N\}$, and if $N \gg 0$, both players will make it their top priority to \emph{not} move in $\{-N|N\}$.  Consequently,
the final player will be forced to move in $\{-N|N\}$ and will pay a heavy penalty. So for $K$ arbitrary,
\[ \lim_{N \to +\infty} \lfout(K + \{-N|N\}) = -\infty\]
\[ \lim_{N \to +\infty} \rfout(K + \{-N|N\}) = +\infty.\]
Now suppose that $G \lesssim H$ and $\pi(G) \ne \pi(H)$.  If $G$ is even and $H$ is odd, then
\[ \rfout(G + \{-N|N\}) = \rout(G + \{-N|N\}) \le \rout(H + \{-N|N\}) = \lfout(G + \{-N|N\})\]
for all $N$, contradicting the fact that the left hand side goes to $+\infty$ while the right hand side goes to $-\infty$.
If $G$ is odd and $H$ is even, then
\[ \rfout(G + \{-N|N\}) = \lout(G + \{-N|N\}) \le \lout(H + \{-N|N\}) = \lfout(H + \{-N|N\})\]
which again contradicts the limiting behavior as $N \to +\infty$.
\end{proof}

\begin{definition}\label{def:half-eq}
If $G, H$ are games, we let $G \approx_+ H$ if $\pi(G) = \pi(H)$ and $\lfout(G + X) = \lfout(H + X)$ for every $X$.  Similarly,
we let $G \gtrsim_+ H$ if $\pi(G) = \pi(H)$ and $\lfout(G + X) \ge \lfout(H + X)$ for every $H$.  Define $G \lesssim_+ H$ if $H \gtrsim_+ G$.

Define $\approx_-$, $\lesssim_-$, and $\gtrsim_-$ analogously, using $\rfout(G + X)$ and $\rfout(H +X)$ instead of $\lfout(G + X)$ and $\lfout(H + X)$.
We call $\approx_+$ and $\approx_-$ \emph{left-equivalence} and \emph{right-equivalence}, respectively.
\end{definition}
Note that if $G \lesssim_+ H$ and $G \lesssim_- H$, then $G \lesssim H$.  The converse follows by Theorem~\ref{parity-incomparable},
and analogous statements hold for $\approx$ and $\approx_\pm$.

\section{Games modulo equivalence}\label{sec:basic}
\subsection{Basic Facts}\label{subsec:basic}
\begin{lemma}
Let $G, H, K \in \mathcal{W}_\mathbb{Z}$, and $s, t \in \mathbb{Z}$.  Then we have the following identities
(not equivalences):
\begin{align}
G + H &= H + G \\
(G + H) + K &= G + (H + K) \label{add-assoc}\\
-(G + H) &= (-G) + (-H) \label{add-neg}\\
-(-G) &= G \\
G + 0 &= G \\
\int^0 G &= G \\
\int^t 0 &= 0 \\
\int^t(G + H) &= \int^ G + \int^t H \label{heat-hom}\\
\int^t (-G) &= -\int^t G \label{heat-neg}\\
\int^t \int^s G &= \int^{t + s} G \label{double-heat}
\end{align}
\end{lemma}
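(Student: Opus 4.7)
The plan is to prove each identity by a routine structural induction on the well-tempered games involved, using the recursive definitions of $+$, $-$, and $\int^t$ from Definitions~\ref{def:disj} and~\ref{def:neg} and the definition of heating. Because the statement asks for literal identity of games (not equivalence), at each step of the induction I must verify that both sides unfold to the same expression $\{\cdots\mid\cdots\}$, with matching multisets (in fact the same syntactic sets) of left and right options. The base cases are always the cases where the relevant games are integers, and they all reduce to elementary arithmetic identities on $\mathbb{Z}$, e.g.\ $G+H=H+G$ and $-(-G)=G$ when $G,H\in\mathbb{Z}$, or the observation that $\int^t G = G$ when $G\in\mathbb{Z}$ (which immediately gives identities (6), (7), and the base cases for (8)--(10)).

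For commutativity and associativity (identities (1), (2)), the inductive step writes out the options of both sides and pairs them off; the symmetry of the defining formula makes this automatic. For (3) and (4) I combine the recursive clause of $-$ with that of $+$: both $-(G+H)$ and $(-G)+(-H)$ have left options $\{-G^R-H,\,-G-H^R\}$ and right options $\{-G^L-H,\,-G-H^L\}$ (once the case where one summand is an integer is handled separately, using that negation of an integer is just the usual negative). Identity (5) follows by induction on $G$ together with the fact that $0$ is a pure integer with no options, so $G+0$ unfolds to $\{G^L+0\mid G^R+0\}$; the inductive hypothesis converts this back to $\{G^L\mid G^R\}=G$.

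The slightly delicate identity is (8), $\int^t(G+H)=\int^t G+\int^t H$, because the $t$-bonus is applied asymmetrically to left and right options. Assuming WLOG that $G$ is not an integer (if both are, both sides equal $f(G,H)=G+H$; if one is, the argument is similar but shorter), the left-hand side unfolds as
\[
\int^t(G+H)=\bigl\{t+\int^t(G^L+H),\; t+\int^t(G+H^L)\,\big|\,-t+\int^t(G^R+H),\; -t+\int^t(G+H^R)\bigr\},
\]
and the right-hand side, using that $(\int^t G)^L = t+\int^t G^L$ and $(\int^t G)^R = -t+\int^t G^R$, unfolds to exactly the same expression after invoking the inductive hypothesis on each option. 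Identities (9) and (10) are analogous: for (9) one checks that the $\pm t$ bonuses swap correctly with the swap of left and right options under negation, and for (10) one verifies that applying the bonus twice contributes $t+s$ at each move.

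The main obstacle is not conceptual but bookkeeping: one must handle the ``one (or both) summand is an integer'' cases carefully at each inductive step, because the recursion in Definition~\ref{def:disj} branches on whether both games are integers. Once these base cases are dispatched, the inductive clauses match term-for-term after a single application of the inductive hypothesis. I would therefore prove identities (1)--(5) first, then derive (6) and (7) as immediate base cases, and finally tackle (8)--(10) where the care about $\pm t$ bonuses is needed.
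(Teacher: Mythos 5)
Your proposal is correct and takes essentially the same approach as the paper, which states that all of these identities follow by easy structural induction akin to Chapter~1 of \emph{ONAG} and works out only identity~(10) as a representative example (explicitly citing its reliance on (2), (3), (8), and (9)). One small gloss worth noting: in your verification of (8), the two sides do not unfold to \emph{literally} the same expression but rather to $t + \bigl(\int^t G^L + \int^t H\bigr)$ versus $\bigl(t + \int^t G^L\bigr) + \int^t H$, which coincide only after invoking associativity~(2) — a dependency your stated plan of proving (1)--(5) first correctly accommodates, but which should be acknowledged rather than described as term-for-term matching.
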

\begin{proof}
All of these identities have easy inductive proofs akin to those found in Chapter 1 of \cite{ONAG}.  To give an example, we prove (\ref{double-heat}).
In the base case where $G$ is an integer, (\ref{double-heat}) is trivial.  Otherwise,
\begin{align*} \int^t \int^s G &= \left\{ t + \int^t\left(\left(\int^s G\right)^L\right) | -t + \int^t\left(\left(\int^s G\right)^R\right)\right\} \\
&= \left\{t + \int^t\left(s + \int^s G^L\right)|-t + \int^t\left(-s + \int^s G^R\right)\right\} \\
&= \left\{t + \left(\int^t s + \int^t \int^s G^L\right) | -t + \left(-\int^t s + \int^t \int^s G^R\right)\right\} \\
&= \left\{(t + s) + \int^t \int^s G^L | -(t + s) + \int^t \int^s G^R\right\} \\
&= \left\{(t + s) + \int^{t+s} G^L | -(t + s) + \int^{t + s} G^R\right\} = \int^{t + s} G\end{align*}
where the penultimate equality follows by induction, and we have also used (\ref{add-assoc}), (\ref{add-neg}), (\ref{heat-hom}), and (\ref{heat-neg}).
\end{proof}

\begin{lemma}
If $G_1, G_2,\ldots$ are games, $t \in \mathbb{Z}$, and $\tilde{f}$ is a generalized disjunctive operation, then
\begin{align}
\pi(G_1 + G_2) &\equiv \pi(G_1) + \pi(H_2) \quad (\modforreal 2) \\
\pi(-G_1) &= \pi(G_1) \\
\pi\left(\int^t G_1\right) &= \pi(G_1) \\
\pi(\tilde{f}(G_1,G_2,\ldots,G_n)) &\equiv \sum_{i = 1}^n \pi(G_i) \quad (\modforreal 2)
\end{align}
\end{lemma}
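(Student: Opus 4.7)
The plan is to prove identity (4) first, since it subsumes (1), and then to handle (2) and (3) by their own straightforward inductions. The whole proof is in the spirit of Chapter 1 of \cite{ONAG}: nothing more than a case split on whether each argument is an integer, plus induction on rank.

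For (4), I would induct on the sum of ranks of the $G_i$. In the base case every $G_i$ is an integer, so by Definition~\ref{def:disj} the value $\tilde{f}(G_1,\ldots,G_n) = f(G_1,\ldots,G_n)$ is itself an integer, hence even-tempered, and the congruence reads $0 \equiv 0$. In the inductive step at least one $G_i$ is not an integer. By Definition~\ref{def:disj}, every left and every right option of $\tilde{f}(G_1,\ldots,G_n)$ has the form
\[ \tilde{f}(G_1,\ldots,G_{i-1},G_i',G_{i+1},\ldots,G_n) \]
where $G_i'$ is a left or right option of some non-integer $G_i$. Since $G_i$ is well-tempered, $\pi(G_i') = 1 - \pi(G_i)$, and the inductive hypothesis gives
\[ \pi\bigl(\tilde{f}(\ldots,G_i',\ldots)\bigr) \equiv \sum_{j \ne i} \pi(G_j) + (1 - \pi(G_i)) \equiv 1 + \sum_{j=1}^n \pi(G_j) \pmod 2. \]
Thus every option of $\tilde{f}(G_1,\ldots,G_n)$ has the same parity $1 + \sum_j \pi(G_j) \pmod 2$, which is precisely what is needed for $\tilde{f}(G_1,\ldots,G_n)$ to be a well-tempered game of parity $\sum_j \pi(G_j) \pmod 2$. (Nonemptiness of the option sets is inherited from the non-integer $G_i$.) Identity (1) is then the special case $n = 2$ with $f(x,y) = x+y$.

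For (2), I would induct on the rank of $G_1$. If $G_1$ is an integer then $-G_1$ is as well and both have parity $0$. Otherwise Definition~\ref{def:neg} says the left and right options of $-G_1$ are $\{-G_1^R\}$ and $\{-G_1^L\}$; by induction each has parity $1 - \pi(G_1)$, so $\pi(-G_1) = \pi(G_1)$. For (3), induct again on the rank of $G_1$. The integer case is trivial, and otherwise the options of $\int^t G_1$ are $t + \int^t G_1^L$ and $-t + \int^t G_1^R$; since $\pm t$ is even-tempered and by induction $\pi(\int^t G_1^{L,R}) = \pi(G_1^{L,R}) = 1 - \pi(G_1)$, identity (1) gives each option parity $1 - \pi(G_1)$, so $\pi(\int^t G_1) = \pi(G_1)$.

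I do not anticipate any genuine obstacle; the main point that needs a moment of attention is verifying in the inductive step of (4) that all options of $\tilde{f}(G_1,\ldots,G_n)$ share a common parity. This is what makes $\tilde f$ a bona fide well-tempered game rather than a malformed object, and it is essentially the content of the lemma.
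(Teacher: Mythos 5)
Your proof is correct and is exactly the ``easy induction'' that the paper omits: a rank induction splitting on whether the arguments are integers, with the key observation that all options of $\tilde{f}(G_1,\ldots,G_n)$ share the parity $1+\sum_j\pi(G_j)$, so the compound is well-formed and well-tempered of the claimed parity. Nothing further is needed.
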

Again the proofs are easy inductions, so we omit them.

\begin{lemma}\label{early-lemma}
Let $G$ be a game, $n, t \in \mathbb{Z}$, $i \in \{0,1\}$, and $f, g$ be order-preserving.  Then
\begin{align}
\lout(-G) &= -\rout(G) \label{lkjfirst} \\
\rout(-G) &= -\lout(G) \\
\lout(G + n) &= \lout(G) + n \label{lkjmiddle}\\
\lout(\tilde{f}(G)) &= f(\lout(G)) \\
\lout(\tilde{g}(n,G)) &= g(n,\lout(G)) \label{lkjlast}\\
\lout\left(\int^t G\right) &= \lout(G) + t \cdot \pi(G) \label{heat-out-one}\\
\rout\left(\int^t G\right) &= \rout(G) - t \cdot \pi(G) \label{heat-out-two}\\
\gap_i\left(\int^t G\right) &= \gap_i(G) - 2t \cdot i \label{heat-gap}\\
\gap_i(-G) &= \gap_i(G) \label{gap-neg}\\
\end{align}
Also, (\ref{lkjmiddle}-\ref{lkjlast}) hold with $\rout$ instead of $\lout$, and (\ref{lkjfirst}-\ref{lkjlast}) hold
with $\lfout$ and $\rfout$ in place of $\lout$ and $\rout$.
\end{lemma}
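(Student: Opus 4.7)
The plan is to prove the identities in the listed order, each by a straightforward induction on the recursive depth of $G$, with the base case $G \in \mathbb{Z}$ handled separately and the inductive step obtained by unfolding one level of definitions. Since each identity has the same flavor as the sample proof of (\ref{double-heat}) given just above, I would treat them as a single sweep rather than write them all out in detail.

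For the negation identities, the inductive step uses that the left options of $-G$ are the negations of the right options of $G$: so $\lout(-G) = \max_{G^R} \rout(-G^R) = -\min_{G^R}\lout(G^R) = -\rout(G)$ by the inductive hypothesis. For (\ref{lkjmiddle}), I would exploit that $n \in \mathbb{Z}$ has no options, so the options of $G + n$ are exactly $G^L + n$ and $G^R + n$, and the constant $n$ pulls out of the $\max$ by the inductive hypothesis. The extension identities (\ref{lkjfirst})--(\ref{lkjlast}) run the same way, with the order-preservation of $f$ (resp.\ $g$) used to commute the function past the $\max$ and $\min$.

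The heating identities (\ref{heat-out-one})--(\ref{heat-out-two}) require a case split on $\pi(G)$. If $G$ is even-tempered, then every $G^L$ is odd-tempered, so the inductive hypothesis gives $\rout(\int^t G^L) = \rout(G^L) - t$, which exactly cancels the $+t$ bonus in the option $t + \int^t G^L$; hence $\lout(\int^t G) = \lout(G) = \lout(G) + t \cdot 0$. If $G$ is odd-tempered, the options are even-tempered, the correction term vanishes, and the $+t$ survives, giving $\lout(\int^t G) = \lout(G) + t$. The $\rout$ case is symmetric.

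For the gap identities, I would use the recursive description $\gap_i(G) = \max\bigl(\sup_{G^L}\gap_i(G^L),\,\sup_{G^R}\gap_i(G^R),\,[\pi(G) = i](\rout(G) - \lout(G))\bigr)$ when $G$ is not an integer, with base values $\gap_0(n) = 0$ and $\gap_1(n) = -\infty$. Identity (\ref{gap-neg}) is immediate because $\rout - \lout$ is invariant under negation by (\ref{lkjfirst}). For (\ref{heat-gap}), applied to the options of $\int^t G$, the integer shifts $\pm t$ do not affect $\gap_i$ (by (\ref{lkjmiddle}) applied to both $\lout$ and $\rout$), and $\rout(\int^t G) - \lout(\int^t G) = \rout(G) - \lout(G) - 2t\pi(G)$ by the heating outcome identities; the constant $-2ti$ then factors out of the overall $\max$. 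The main bookkeeping obstacle is handling $\gap_1$ at integer leaves, where one uses the convention $-\infty - 2t = -\infty$. The $\rout$, $\lfout$, and $\rfout$ versions of the remaining identities follow from the $\lout$ and $\rout$ versions together with the fact that negation, integer shift, and extension by an order-preserving function all preserve $\pi(G)$ (since integers have parity $0$). None of this is a serious obstacle; the whole argument is exactly the sort of routine induction the authors are alluding to.
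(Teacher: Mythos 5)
The paper simply states that ``all the proofs are easy inductions'' and omits them; your proposal is an accurate sketch of exactly those inductions, with the right case-split on $\pi(G)$ for the heating identities, the right use of translation-invariance and the $-\infty$ convention for $\gap_i$, and the right parity-preservation observation to transfer the $\lout/\rout$ identities to $\lfout/\rfout$. Essentially the same approach as the paper.

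One small caution: your recursive formula for $\gap_i$ writes the self-contribution as $[\pi(G)=i](\rout(G)-\lout(G))$. If $[\cdot]$ is read as an Iverson bracket (a literal $0$/$1$ factor) this is wrong when $\pi(G)\ne i$ and the odd subgames of $G$ all have $\rout-\lout<0$, since the spurious $0$ would then win the $\max$; you want the term to be $-\infty$ (i.e.\ absent) when $\pi(G)\ne i$, not $0$. Given your explicit handling of the $-\infty$ conventions elsewhere I take this to be a notational shorthand rather than an actual error, but it is worth making the masking semantics explicit if you write this out.
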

Again, all the proofs are easy inductions, so we omit them.

\begin{lemma}\label{obvious-compatibles}
Let $\Box$ be any of the following operations $\approx, \gtrsim, \lesssim, \approx_\pm, \gtrsim_\pm, \lesssim_\pm$,
and let $\tilde{\Box}$ denote $\approx, \gtrsim, \lesssim, \approx_\mp, \gtrsim_\mp, \lesssim_\mp$, respectively.  Let $G, H, K$
be games and suppose that $G \Box H$.  Then
\begin{align}
G + K &\Box H + K \label{add-compat}\\
-H &\tilde{\Box} -G \label{neg-compat}\\
\int^t G &\Box \int^t H  \label{heat-compat}
\end{align}
Also, if $G_i \Box H_i$ for every $i$, then
\begin{equation} \{G_1,\ldots,G_n|G_{n+1},\ldots,G_m\} \Box \{H_1,\ldots,H_n|H_{n+1},\ldots,H_m\}\label{build-compat}\end{equation}
\end{lemma}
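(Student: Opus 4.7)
The plan is to prove each of (\ref{add-compat})--(\ref{build-compat}) in turn, handling the six relations $\approx, \gtrsim, \lesssim, \approx_\pm, \gtrsim_\pm, \lesssim_\pm$ uniformly by recalling that each is defined by an outcome inequality (or equality) on $G + X$ versus $H + X$ for arbitrary test $X$, plus a parity condition in the $\pm$ versions. In every case the parity condition propagates trivially from hypothesis to conclusion---either by Theorem~\ref{parity-incomparable}, or from the $\pm$ hypothesis itself combined with the parity identities $\pi(-G) = \pi(G)$, $\pi(\int^t G) = \pi(G)$, $\pi(G + K) \equiv \pi(G) + \pi(K) \pmod 2$. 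So the real work lies in the outcome comparisons.

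For (\ref{add-compat}) the defining inequality for $G \Box H$, applied to the test game $K + Y$ for arbitrary $Y$, immediately gives the required inequality for $G + K$ versus $H + K$ with test $Y$, once one invokes associativity $(G + K) + Y = G + (K + Y)$. For (\ref{neg-compat}) I would rewrite $\lout(-G + Y) = -\rout(G + (-Y))$ and $\rout(-G + Y) = -\lout(G + (-Y))$ using Lemma~\ref{early-lemma}, so that the hypothesis $G \Box H$ applied with test game $-Y$ translates directly into the claim for $-H$ vs $-G$; the sign flip explains why $\Box$ becomes $\tilde{\Box}$ (swapping $\gtrsim$ and $\lesssim$ but fixing $\approx$, and swapping the $\pm$ subscript because negation exchanges final-Left with final-Right).

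For (\ref{heat-compat}) the key trick is the identity $X = \int^t \int^{-t} X$ from (\ref{double-heat}), which combined with (\ref{heat-hom}) gives $\int^t G + X = \int^t(G + \int^{-t} X)$. Then by (\ref{heat-out-one}),
\[ \lout\bigl(\int^t G + X\bigr) = \lout\bigl(G + \int^{-t} X\bigr) + t \cdot \pi\bigl(G + \int^{-t} X\bigr),\]
and since $\pi(G) = \pi(H)$ the additive correction is the same on both sides. The hypothesis $G \Box H$ applied to the test game $\int^{-t} X$ yields the desired inequality, and analogous manipulations using (\ref{heat-out-two}) handle $\rout$, $\lfout$, and $\rfout$.

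Finally, (\ref{build-compat}) is the one requiring genuine induction, which I would perform on the structure of the test game $X$. Let $G$ and $H$ abbreviate the two compound games; since both have nonempty option sets, neither $G + X$ nor $H + X$ is ever an integer, and one has the recursive formula
\[ \lout(G + X) = \max\Bigl( \max_{1 \le i \le n} \rout(G_i + X),\ \max_{X^L} \rout(G + X^L) \Bigr),\]
with an analogous formula for $H$. The first maximum is bounded termwise using the hypothesis $G_i \Box H_i$ (with test $X$); the second by the inductive hypothesis applied to the simpler test games $X^L$. Symmetric formulas handle $\rout$, and the $\lfout/\rfout$ cases follow by substituting these for $\lout$ and $\rout$ according to the parity of $G + X$. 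I expect the main obstacle to be the bookkeeping in (\ref{build-compat}): one must verify that when $\Box$ is $\approx_+$ or one of its relatives, the parity of $G_i + X$ (which is opposite to that of $G + X$) still lines up correctly so that the $\lfout$-based hypothesis on $G_i$ yields the needed inequality on $\rout(G_i + X)$ or $\lout(G_i + X)$. This is a finite case analysis on $\pi(G)$ and $\pi(X)$, but it is where the temptation to hand-wave is greatest.
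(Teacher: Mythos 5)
Your proposal is correct and follows essentially the same approach as the paper's proof: for (\ref{add-compat}) and (\ref{neg-compat}) you absorb the extra piece into the test game (using associativity or negation), for (\ref{heat-compat}) you rewrite $\int^t G + X = \int^t(G + \int^{-t}X)$ and observe the heat-induced offset depends only on parity (which agrees for $G$ and $H$), and for (\ref{build-compat}) you do an induction on the test game $X$, which is exactly what the paper leaves as an exercise. The only loose end is a small one: your displayed formula for $\lout(G+X)$ omits the base case where $X$ is an integer (so the inner $\max_{X^L}$ is over the empty set and should be dropped), but since $G + X$ is still a non-integer compound game this does not affect the argument, and the parity bookkeeping you flag as the trickiest part does indeed work out when checked case by case.
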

\begin{proof}
For example, suppose that $\Box$ is $\gtrsim_+$.  If $G \gtrsim_+ H$, then for any game $X$,
\[ \lfout((G + K) + X) = \lfout(G + (K + X)) \ge \lfout(H + (K + X)) = \lfout((H + K) + X),\]
so (\ref{add-compat}) holds.  Similarly,
\[ \rfout(-H + X) = \rfout(-(H + (-X))) = -\lfout(H + (-X)) \ge -\lfout(G + (-X)) = \rfout(-G + X),\]
so $-H \gtrsim_- -G$ and (\ref{neg-compat}) holds.  For (\ref{heat-compat}), note that
\begin{align*} \lfout\left(\int^t G + X\right) - \lfout\left(\int^t H + X\right) &= \lfout\left(\int^t\left(G + \int^{-t} X\right)\right)
- \lfout\left(\int^t\left(H + \int^{-t}X\right)\right) \\
&= \lfout\left(G + \int^{-t} X\right) - \lfout\left(H + \int^{-t} X\right) \ge 0\end{align*}
using either (\ref{heat-out-one}) or (\ref{heat-out-two}) and the fact that $G$ and $H$ have the same parity.  Thus (\ref{heat-compat}) holds.

For the final claim, one can show by an easy induction on $X$ that
\[ \lfout(\{G_1,\ldots,G_n|G_{n+1},\ldots,G_m\} + X) \ge \lfout(\{H_1,\ldots,H_n|H_{n+1},\ldots,H_m\} + X).\]
We leave this as an exercise to the reader.  Having shown the lemma for $\gtrsim_+$, the case of $\lesssim_+$ follows immediately,
$\gtrsim_-$ and $\lesssim_-$ follow by symmetry, and the remaining five relations follow because they are logical conjunctions of these four.
\end{proof}
An immediate consequence of this is that $\mathcal{W}_{\mathbb{Z}}/\approx$, $\mathcal{W}_{\mathbb{Z}}/\approx_+$,
and $\mathcal{W}_{\mathbb{Z}}/\approx_-$ are partially ordered monoids with addition induced by $+$ and partial orders induced
by $\gtrsim$, $\gtrsim_+$, or $\gtrsim_-$ respectively.  Also, the game-building operation $\{\cdots|\cdots\}$ is well-defined on
each of these monoids, as are heating, cooling, and negation.  We will see in Theorem~\ref{generalized-compatibility} that generalized disjunctive operations
are also compatible with the nine relations.

\subsection{Playing two games at once}
\begin{lemma}\label{mirrors}
If $G$ is a game, then
\[ \rout(G - G) \ge 0\]
\[ \lout(G - G) \le 0\]
\end{lemma}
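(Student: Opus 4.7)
The plan is a standard mirror-strategy (Tweedledum-Tweedledee) argument, carried out by induction on the number of subgames of $G$ (equivalently, induction on the formation of $G$).

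For the base case, if $G$ is an integer $n$, then by Definition~\ref{def:disj} the sum $G + (-G)$ is the integer $n + (-n) = 0$, so both outcomes are $0$ and both inequalities hold. For the inductive step with $G$ non-integer, I would focus on $\rout(G - G) \ge 0$ first. Since $\rout$ of a non-integer position is the minimum over Right's options of $\lout$, I need to check every right option of $G + (-G)$. By Definitions~\ref{def:disj} and~\ref{def:neg}, these right options are of two shapes: $G^R + (-G) = G^R - G$ (Right moves in the first summand) and $G + (-(G^L)) = G - G^L$ (Right moves in $-G$, whose right options come from the left options of $G$).

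In each case Left responds with the mirror move, restoring a position of the form $H - H$ with $H$ a proper subgame of $G$. Concretely, in $G^R - G$ Left moves $-G \to -G^R$, so $G^R - G^R$ is a left option, giving
\[ \lout(G^R - G) \ge \rout(G^R - G^R) \ge 0\]
by the induction hypothesis applied to $G^R$. Symmetrically, in $G - G^L$ Left moves $G \to G^L$, reaching $G^L - G^L$, so $\lout(G - G^L) \ge \rout(G^L - G^L) \ge 0$ by induction on $G^L$. Taking the minimum over all right options yields $\rout(G - G) \ge 0$.

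For $\lout(G - G) \le 0$, the slickest route is to exploit the symmetry of $G - G$ under negation. Using the identities $-(A + B) = (-A) + (-B)$, $-(-A) = A$, and commutativity of $+$ from the previous lemma, one has $-(G - G) = (-G) + G = G - G$. Then Lemma~\ref{early-lemma}, line~(\ref{lkjfirst}), gives $\lout(G - G) = \lout(-(G - G)) = -\rout(G - G) \le 0$. (Alternatively, one could rerun the mirror argument with Left and Right swapped, which is equally easy but longer.) The main thing to get right is just the bookkeeping of what the right options of $-G$ look like, so that the mirror move genuinely produces a position of the form "game minus itself"; once that is clear, the induction is immediate.
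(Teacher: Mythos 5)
Your proof is correct and formalizes exactly the mirror-strategy argument the paper gives informally ("respond to any move in $G$ with the corresponding move in $-G$"), making the induction and the bookkeeping of options explicit. Deriving $\lout(G-G)\le 0$ from $\rout(G-G)\ge 0$ via $-(G-G)=G-G$ and $\lout(-H)=-\rout(H)$ is a tidy shortcut, but it is the same argument the paper invokes with "a similar argument shows."
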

\begin{proof}
If Left goes second, she can respond to any move in $G$ with the corresponding move in $-G$, following this strategy till the game ends.
This ensures her a score of at least 0, though she could possibly do better.  So $\rout(G - G) \ge 0$.  A similar argument shows
$\lout(G - G) \le 0$.
\end{proof}

In classic combinatorial game theory, a simple argument shows that $G + H \ge 0$ if $G \ge 0$ and $H \ge 0$.  The analogous result for scoring games
is the following confusing lemma, which encompasses about a dozen inequalities all proven in analogous ways:
\begin{lemma}\label{combo-strat}
Let $G, H$ be well-tempered games.  Then
\begin{align} \rout(G + H) &\ge \rout(G) + \rout(H) - \max(\pi(H) \cdot \gap_{\pi(G)}(G),\pi(G) \cdot \gap_{\pi(H)}(H))\label{first} \\
\lout(G + H) &\ge \lout(G) + \rout(H) - \max(\pi(H) \cdot \gap_{1-\pi(G)}(G),(1-\pi(G)) \cdot \gap_{\pi(H)}(H))\label{second}\end{align}
where $i \cdot n$ equals $n$ if $i = 1$ and $0$ if $i = 0$, even if $n = -\infty$.
\end{lemma}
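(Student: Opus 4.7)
The plan is to prove the two inequalities simultaneously by strong induction on a well-founded measure of the pair $(G,H)$, such as the sum of the game-tree depths.

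For the base case, suppose at least one of $G, H$ is an integer. By (\ref{lkjmiddle}), an integer summand merely translates $\lout$ and $\rout$ by its value, so each inequality reduces to a one-game statement. The nontrivial content is then $\gap_i(H) \ge \rout(H) - \lout(H)$, which is immediate from the definition of $\gap_i$. One checks the subcases $\pi(H) \in \{0,1\}$ using the conventions $0 \cdot (-\infty) = 0$ and $1 \cdot (-\infty) = -\infty$, since $\gap_1$ of an integer is $-\infty$ while $\gap_0$ is $0$.

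For the inductive step of the first inequality, I would bound $\lout(R^R)$ for each right option $R^R$ of $G+H$. For $R^R = G^R + H$, I apply the inductive hypothesis for the second inequality to the pair $(G^R, H)$. Because $1 - \pi(G^R) = \pi(G)$ and $\gap_{\pi(G)}(G^R) \le \gap_{\pi(G)}(G)$ by subgame-monotonicity of the gap, the penalty in the inductive bound is at most $M_1 := \max(\pi(H)\cdot\gap_{\pi(G)}(G), \pi(G)\cdot\gap_{\pi(H)}(H))$; combined with $\lout(G^R) \ge \rout(G)$ (which holds by definition of $\rout$), this yields $\lout(G^R + H) \ge \rout(G) + \rout(H) - M_1$. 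The option $G + H^R$ is handled symmetrically, applying the inductive second inequality to $(H^R, G)$ via the commutativity $G + H^R = H^R + G$ and the parity identity $1 - \pi(H^R) = \pi(H)$. Taking the minimum over all right options gives the first inequality.

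For the inductive step of the second inequality, it suffices to exhibit a single favorable left option. I would pick $G^L$ with $\rout(G^L) = \lout(G)$, which exists whenever $G$ is not an integer. Applying the inductive hypothesis for the first inequality to $(G^L, H)$, and using $\pi(G^L) = 1 - \pi(G)$ together with gap monotonicity, the resulting penalty for $(G^L, H)$ is bounded by $M_2 := \max(\pi(H)\cdot\gap_{1-\pi(G)}(G), (1-\pi(G))\cdot\gap_{\pi(H)}(H))$. This gives $\rout(G^L + H) \ge \lout(G) + \rout(H) - M_2$, and since $\lout(G+H) \ge \rout(G^L + H)$, the second inequality follows.

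The main obstacle will be the parity bookkeeping: verifying in each subcase that the parity flip $\pi(X^L) = \pi(X^R) = 1 - \pi(X)$, combined with subgame-monotonicity of the gap, makes every inductive penalty fit inside the target $M_1$ or $M_2$. Additional care is needed around the $-\infty$ values that arise when a subgame is an integer, in particular to confirm that the maxima defining $M_1$ and $M_2$ behave correctly under the $i \cdot n$ convention stipulated in the lemma.
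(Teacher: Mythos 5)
Your proposal is correct and follows essentially the same route as the paper: a joint induction proving (\ref{first}) and (\ref{second}) together, with (\ref{first}) reduced to (\ref{second}) on right options and (\ref{second}) reduced to (\ref{first}) on a $\lout$-realizing left option, using the parity flip $\pi(X') = 1-\pi(X)$ and subgame-monotonicity of $\gap_i$ to fit the penalties, and treating integer summands via translation in the base case. The only cosmetic difference is that you organize the base case as ``at least one summand is an integer'' while the paper's argument for (\ref{second}) folds the $H$-integer subcase into the inductive step; both are fine.
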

We list the four cases of these equations for future reference:
If $G, H$ are even, then
\begin{align} \rout(G + H) &\ge \rout(G) + \rout(H) \label{eer} \\
 \lout(G + H) &\ge \lout(G) + \rout(H) - \gap_0(H).\label{eel}\end{align}
If $G, H$ are odd, then
\begin{align} \rout(G + H) &\ge \rout(G) + \rout(H) - \max(\gap_1(G),\gap_1(H)) \label{oor}\\
 \lout(G + H) &\ge \lout(G) + \rout(H) - \gap_0(G)\label{ool}\end{align}
If $G$ is odd and $H$ is even, then
\begin{align} \rout(G + H) &\ge  \rout(G) + \rout(H) - \gap_0(H)\label{oer}\\
 \lout(G + H) &\ge  \lout(G) + \rout(H) \label{oel}\end{align}
Finally, if $G$ is even and $H$ is odd, then
\begin{align} \rout(G + H) &\ge  \rout(G) + \rout(H) - \gap_0(G)\label{eor}\\
 \lout(G + H) &\ge \lout(G) + \rout(H) - \max(\gap_1(G),\gap_1(H))\label{eol}\end{align}
Here we have used the fact that $\max(0,\gap_0(H)) = \gap_0(H)$ because $\gap_0(H) \ge 0$.
\begin{proof}
We prove (\ref{first}-\ref{second}) together by induction on the complexity of $G$ and $H$.  First suppose that $G$ is an integer.  Then
\begin{align*} \max(\pi(H)\gap_{\pi(G)}(G),\pi(G)\gap_{\pi(H)}(H)) &= \max(\pi(H)\gap_0(G),0\gap_{\pi(H)}(H)) \\ &= \max(\pi(H)0,0) = 0.\end{align*}
So (\ref{first}) says
\[ \rout(G + H) \ge \rout(G) + \rout(H),\]
which holds because $\rout(n+H) = n + \rout(H) = \rout(n) + \rout(H)$ for any integer $n$, by a variant of (\ref{lkjmiddle}).  So (\ref{first}) holds in this case.
A similar argument shows that (\ref{first}) holds when $H$ is an integer.  Suppose neither $G$ nor $H$ is an integer.  Then
\[ \rout(G +H) = \lout((G+H)^R)\]
for some right option $(G+H)^R$ of $G+H$.  The option $(G+H)^R$ is either of the form $G^R + H$ or $G + H^R$.  By symmetry, we can assume
the former.  Then
\[ \rout(G + H) = \lout(G^R + H) \ge \lout(G^R) + \rout(H) - \max(\pi(H)\gap_{1-\pi(G^R)}(G^R),(1-\pi(G^R))\gap_{\pi(H)}(H))\]
by induction.  But $\pi(G^R) = 1 - \pi(G)$, and $\gap_i(G^R) \le \gap_i(G)$, so
\begin{align*} \rout(G + H) &\ge \lout(G^R) + \rout(H) - \max(\pi(H)\gap_{\pi(G)}(G^R),\pi(G)\gap_{\pi(H)}(H)) \\ &\ge \lout(G^R) + \rout(H) - \max(\pi(H)\gap_{\pi(G)}(G),
\pi(G)\gap_{\pi(H)}(H)).\end{align*}
But since $\rout(G) = \min_{G^R} \lout(G^R)$, we must have $\lout(G^R) \ge \rout(G)$.  So
\[ \rout(G + H) \ge \rout(G) + \rout(H) - max(\pi(H)\gap_{\pi(G)}(G),
\pi(G)\gap_{\pi(H)}(H))\]
as desired.  Thus (\ref{first}) holds.

Next, we show that (\ref{second}) holds.  First suppose that $G$ is an integer.  Then
\[ \max(\pi(H)\gap_{1 - \pi(G)}(G),(1-\pi(G))\gap_{\pi(H)}(H)) = \max(\pi(H)\gap_1(G),\gap_{\pi(H)}(H)).\]
If $H$ is odd-tempered, this equals $\max(-\infty,\gap_1(H)) = \gap_1(H)$.  If $H$ is even-tempered, this equals $\max(0,\gap_0(H)) = \gap_0(H)$.  Either way,
it equals $\gap_{\pi(H)}(H)$. So we want to show that
\[ \lout(G + H) \ge \lout(G) + \rout(H) - \gap_{\pi(H)}(H).\]
But since $G$ is an integer, $\lout(G + H) = G + \lout(H) = \lout(G) + \lout(H)$.  So it remains to show that
\[ \lout(H) - \rout(H) \ge -\gap_{\pi(H)}(H)\]
which is true by definition of $\gap_i(H)$.

Finally, suppose that $G$ is not an integer.  Then $\lout(G) = \rout(G^L)$ for some left option $G^L$ of $G$.  As $G^L + H$ is a left option
of $G + H$,
\[ \lout(G + H) \ge \rout(G^L + H).\]
By induction, it follows that
\begin{align*} \lout(G + H) &\ge \rout(G^L) + \rout(H) - \max(\pi(H)\gap_{\pi(G^L)}(G^L),\pi(G^L)\gap_{\pi(H)}(H))
 \\ &= \lout(G) + \rout(H) - \max(\pi(H)\gap_{\pi(G^L)}(G^L),\pi(G^L)\gap_{\pi(H)}(H))\end{align*}
Now $\pi(G^L) = 1 - \pi(G)$ and $\gap_i(G^L) \le \gap_i(G)$, so
\begin{align*} \lout(G + H) &\ge \lout(G) + \rout(H) - \max(\pi(H)\gap_{1 - \pi(G)}(G^L),(1-\pi(G))\gap_{\pi(H)}(H)) \\ &\ge
\lout(G) + \rout(H) - \max(\pi(H)\gap_{1 - \pi(G)}(G),(1-\pi(G))\gap_{\pi(H)}(H))\end{align*}
as desired.
\end{proof}

By symmetry, we also get the following:
\begin{align*} \lout(G + H) &\le \lout(G) + \lout(H) + \max(\pi(H)\gap_{\pi(G)}(G),\pi(G)\gap_{\pi(H)}(H))
\\ \rout(G + H) &\le \rout(G) + \lout(H) + \max(\pi(H)\gap_{1-\pi(G)}(G),(1-\pi(G))\gap_{\pi(H)}(H))\end{align*}
There are corresponding duals of (\ref{eer}-\ref{eol}).  We will refer to these dual inequalities using a $\star$.  For example, (\ref{eor}$\star$) says
that if $G$ is even and $H$ is odd, then
\[ \lout(G + H) \le \lout(G) + \lout(H) + \gap_0(G) \]

\subsection{Games with small gaps}
Using the inequalities of the previous section, we can examine the interaction of the gap functions with addition:
\begin{theorem}\label{gap-sums}
If $G, H$ are games, then $\gap_0(G + H) \le \gap_0(G) + \gap_0(H)$ and $\gap_1(G + H) \le \max(\gap_1(G), \gap_1(H))$.
\end{theorem}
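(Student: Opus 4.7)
The plan is to bound $\rout(K) - \lout(K)$ directly for each subgame $K$ of $G + H$ and then take a supremum. A routine induction shows that every subgame of $G + H$ is of the form $G' + H'$ for some subgame $G'$ of $G$ and some subgame $H'$ of $H$, so by the monotonicity $\gap_i(G') \le \gap_i(G)$ it is enough to prove, for every such $K' = G' + H'$, that $\rout(K') - \lout(K') \le \gap_0(G') + \gap_0(H')$ when $K'$ is even, and $\rout(K') - \lout(K') \le \max(\gap_1(G'), \gap_1(H'))$ when $K'$ is odd.

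For the first inequality the parities of $G'$ and $H'$ agree. In the even--even case I would pair (\ref{eel}$\star$), which yields $\rout(K') \le \rout(G') + \lout(H') + \gap_0(H')$, with the $(G',H')$-swap of (\ref{eel}), which yields $\lout(K') \ge \rout(G') + \lout(H') - \gap_0(G')$; subtracting gives $\rout(K') - \lout(K') \le \gap_0(G') + \gap_0(H')$, since the common term $\rout(G') + \lout(H')$ cancels. The odd--odd case is entirely analogous, pairing (\ref{ool}$\star$) with the swap of (\ref{ool}).

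For the second inequality $K'$ is odd, and by commutativity of $+$ I may assume $G'$ is odd and $H'$ is even. If $H'$ is an integer then $\rout(K') - \lout(K') = \rout(G') - \lout(G') \le \gap_1(G')$ and we are done. Otherwise, the crucial move is to choose a Left-optimal option $(H')^L_*$ of $H'$---one satisfying $\rout((H')^L_*) = \lout(H')$---and to consider the left option $G' + (H')^L_*$ of $K'$. Since both summands are odd, (\ref{oor}) applies, and together with $\gap_1((H')^L_*) \le \gap_1(H')$ yields $\lout(K') \ge \rout(G' + (H')^L_*) \ge \rout(G') + \lout(H') - \max(\gap_1(G'), \gap_1(H'))$. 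Combining with the bound $\rout(K') \le \rout(G') + \lout(H')$ coming from (\ref{oel}$\star$) and subtracting gives the desired inequality.

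The hard part is the odd--even case: the naive pairing of (\ref{oel}) with (\ref{oel}$\star$) only yields $\rout(K') - \lout(K') \le (\rout(G') - \lout(G')) + (\lout(H') - \rout(H'))$, and the ``backward gap'' $\lout(H') - \rout(H')$ of an even game is \emph{not} controlled by $\gap_1(H')$ in general; already for $H' = \{\{0|5\}|\{-5|0\}\}$ one has $\lout(H') - \rout(H') = 10$ while $\gap_1(H') = 5$. The strategic detour through $(H')^L_*$ replaces this unbounded slack with the odd--odd sum $G' + (H')^L_*$, whose $\rout$ is properly bounded below by $\gap_1$ via (\ref{oor}).
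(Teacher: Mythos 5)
Your proof is correct, and for the $\gap_0$ half it is essentially the paper's argument: reduce to bounding $\rout(K')-\lout(K')$ for each subgame $K'=G'+H'$, then pair (\ref{eel}) with (\ref{eel}$\star$) (and (\ref{ool}) with (\ref{ool}$\star$)) and subtract. The one real divergence is in the odd case, where you treat the failure of the naive pairing as a genuine obstacle and build a workaround. It is not an obstacle in the paper: Lemma~\ref{combo-strat} already supplies, as instance (\ref{eol}) read with the even summand written first, the bound $\lout(H'+G') \ge \lout(H') + \rout(G') - \max(\gap_1(H'),\gap_1(G'))$ for $H'$ even and $G'$ odd, and subtracting this from (\ref{oel}$\star$) finishes the odd case in one line. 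Your detour --- descending to a Left-optimal option $(H')^L_*$ of $H'$ with $\rout\bigl((H')^L_*\bigr)=\lout(H')$ and applying (\ref{oor}) to the odd--odd sum $G'+(H')^L_*$ --- is precisely the inductive step by which the paper proves inequality (\ref{second}) of Lemma~\ref{combo-strat} in the first place, so you have in effect re-derived the special case of (\ref{eol}) you needed rather than quoting it. The argument is sound (and your example showing that $\lout(H')-\rout(H')$ is not controlled by $\gap_1(H')$ is a fair diagnosis of why the naive pairing fails), just longer than necessary.
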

\begin{proof}
We prove each statement separately by induction.  First consider $\gap_0(G + H)$.  If $g = \gap_0(G)$ and $h = \gap_0(H)$,
then every option $G'$ of $G$ has $\gap_0(G') \le g$ and every option $H'$ of $H$ has $\gap_0(H') \le h$.  So by induction,
every option of $G + H$ has $\gap_0 \le g + h$, and it remains to show that if $G + H$ is even-tempered, then $\lout(G + H) - \rout(G + H) \ge -(g + h)$.

If $G$ and $H$ are both even, then by (\ref{eel})
\[ \lout(G + H) \ge \lout(G) + \rout(H) - h\]
and by (\ref{eel}$\star$)
\[ \rout(H + G) \le \rout(H) + \lout(G) + g.\]
Thus
\[ \lout(G + H) - \rout(G + H) \ge -(h + g)\]
as desired.  Similarly, if $G$ and $H$ are both odd, then by (\ref{ool}) and (\ref{ool}$\star$),
\[ \lout(G + H) \ge \lout(G) + \rout(H) - g\]
\[ \rout(H + G) \le \rout(H) + \lout(G) + h\]
so
\[ \lout(G + H) - \rout(G + H) \ge -(h+g)\]
as desired.  If $G$ and $H$ have different parity, then $G + H$ is not even and there is nothing to show.

Next, suppose that $\gap_1(G), \gap_1(H) \le m$.  We want to show that $\gap_1(G + H) \le m$.  Every option of $G$ and $H$ also has
$\gap_1 \le m$, so by induction, every option of $G + H$ has $\gap_1 \le m$.  So it remains to check
that if $G + H$ is odd, then $\lout(G + H) - \rout(G + H) \ge -m$.  Without loss of generality, $G$ is odd and $H$ is even.  Then
by (\ref{eol}) and (\ref{oel}$\star$),
\[ \lout(H + G) \ge \lout(H) + \rout(G) - \max(\gap_1(H),\gap_1(G)) \ge \lout(H) + \rout(G) - m,\]
\[ \rout(G+H) \le \rout(G) + \lout(H)\]
Then
\[ \lout(G + H) - \rout(G + H) \ge -m\]
as desired.
\end{proof}

Let $\mathcal{I}$ be the class of games with $\gap_0(G) = 0$, and $\mathcal{J}$ be the class of games with
$\gap_0(G) = 0$ and $\gap_1(G) \le 2$.  By the previous theorem, $\mathcal{I}$ and $\mathcal{J}$ are closed under addition.
They are also closed under negation, by (\ref{gap-neg}).  Note that if $G \in \mathcal{I}$,
then $\int^t G \in \mathcal{J}$ for $t \gg 0$, by (\ref{heat-gap}).

The class $\mathcal{J}$ will be important in \S \ref{sec:psi}, but for now, we focus on the nice properties of $\mathcal{I}$.
\begin{lemma}\label{i-games-nice}
If $G \in \mathcal{I}$ is an even game, then $G \gtrsim 0$ if and only if $\rout(G) \ge 0$.
\end{lemma}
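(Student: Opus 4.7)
The plan is to handle the two directions separately, with most of the work in the ``if'' direction. For the ``only if'' direction, I specialize the definition of $\gtrsim$ at $X = 0$, which immediately yields $\rout(G) \ge \rout(0) = 0$.

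For the converse, fix $G \in \mathcal{I}$ even with $\rout(G) \ge 0$. First note that since $G$ itself is an even subgame of $G$ and $\gap_0(G) = 0$, we automatically have $\lout(G) \ge \rout(G) \ge 0$. The goal is then to establish $\lout(G + X) \ge \lout(X)$ and $\rout(G + X) \ge \rout(X)$ for every game $X$, by simultaneous induction on the complexity of $X$ (with $G$ held fixed). The base case $X \in \mathbb{Z}$ is immediate from Lemma~\ref{early-lemma}. For the inductive step with $X \notin \mathbb{Z}$, the bound $\lout(G + X) \ge \lout(X)$ follows by choosing an optimal left option $X^L$ with $\rout(X^L) = \lout(X)$ and using $\lout(G + X) \ge \rout(G + X^L) \ge \rout(X^L)$, where the last step is the inductive hypothesis. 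For $\rout(G + X) \ge \rout(X)$, I check each right option of $G + X$: those of the form $G + X^R$ satisfy $\lout(G + X^R) \ge \lout(X^R) \ge \rout(X)$ by induction and the definition of $\rout(X)$.

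The main obstacle is the right options of $G + X$ of the form $G^R + X$, because $G^R$ is odd-tempered and the induction hypothesis does not apply to it directly. My plan is to invoke Lemma~\ref{combo-strat} instead. Since $G^R$ is a subgame of $G$, we have $\gap_0(G^R) \le \gap_0(G) = 0$, and since $\rout(G) = \min_{G^R} \lout(G^R) \ge 0$, we get $\lout(G^R) \ge 0$ for each particular right option. If $X$ is even, inequality (\ref{oel}) gives $\lout(G^R + X) \ge \lout(G^R) + \rout(X) \ge \rout(X)$; if $X$ is odd, inequality (\ref{ool}) gives $\lout(G^R + X) \ge \lout(G^R) + \rout(X) - \gap_0(G^R) = \lout(G^R) + \rout(X) \ge \rout(X)$. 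In either case $\lout(G^R + X) \ge \rout(X)$, which finishes the induction and the proof.
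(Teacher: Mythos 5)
Your proof is correct, but it takes a more roundabout route than the paper's. The paper's proof of the nontrivial direction is a single-step application of Lemma~\ref{combo-strat}: for $X$ even it applies (\ref{eel}) to $(X,G)$ and (\ref{eer}) to $(G,X)$, and for $X$ odd it applies (\ref{oel}) and (\ref{oer}) similarly; in each case $\gap_0(G) = 0$ and $\rout(G) \ge 0$ make the correction terms vanish and the required bounds $\lout(G+X) \ge \lout(X)$, $\rout(G+X) \ge \rout(X)$ drop out immediately. You instead run a fresh induction on $X$, handling moves in $X$ by the inductive hypothesis and then falling back on Lemma~\ref{combo-strat} (via (\ref{oel}) or (\ref{ool}) applied to $G^R + X$, together with $\lout(G^R) \ge \rout(G) \ge 0$ and $\gap_0(G^R) = 0$) for the moves in $G$. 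In effect your induction re-derives the special cases of Lemma~\ref{combo-strat} for the fixed summand $G$, so you end up leaning on the same key lemma while duplicating part of its inductive work. The net trade-off is that you only need (\ref{oel}) and (\ref{ool}) from Lemma~\ref{combo-strat} rather than all four of (\ref{eel}), (\ref{eer}), (\ref{oel}), (\ref{oer}), at the cost of an extra layer of induction that the paper avoids.
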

\begin{proof}
If $G \gtrsim 0$ then $\rout(G) \ge 0$ trivially.  Conversely, suppose $\rout(G) \ge 0$.
We need to show that $\lout(G + X) \ge \lout(X)$ and $\rout(G + X) \ge \rout(X)$ for every $X$.  If $X$ is even, these follow
by (\ref{eel}) applied to $X$ and $G$, and (\ref{eer}) applied to $G$ and $X$, respectively.  If $X$ is odd, these follow from (\ref{oel}) applied to $X$ and $G$,
and (\ref{oer}) applied to $G$ and $X$, respectively.
\end{proof}

\begin{theorem}\label{i-games-nice-2}
If $G, H \in \mathcal{I}$, then $G - G \approx 0$ and the following are equivalent:
\begin{description}
\item[(a)] $G \gtrsim H$
\item[(b)] $G \gtrsim_- H$
\item[(c)] $\rout(G - H) \ge 0$ and $\pi(G) = \pi(H)$
\item[(d)] $G \gtrsim_+ H$
\item[(e)] $\lout(H - G) \le 0$ and $\pi(G) = \pi(H)$.
\end{description}
\end{theorem}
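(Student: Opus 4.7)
The first step is to prove $G - G \approx 0$. Because $\mathcal{I}$ is closed under addition (Theorem~\ref{gap-sums}) and under negation (by (\ref{gap-neg})), the game $G - G$ lies in $\mathcal{I}$ and is even-tempered. Lemma~\ref{mirrors} gives $\rout(G - G) \ge 0$ and $\lout(G - G) \le 0$, so Lemma~\ref{i-games-nice} applied to $G - G$ yields $G - G \gtrsim 0$; applied to $-(G - G)$, whose right outcome is $-\lout(G - G) \ge 0$ by (\ref{lkjfirst}), it yields $-(G - G) \gtrsim 0$, i.e., $G - G \lesssim 0$. Hence $G - G \approx 0$.

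For the equivalence of (a)--(e), first note that (c) $\iff$ (e), since $\lout(H - G) = -\rout(G - H)$ by (\ref{lkjfirst}). Next, (a) implies both (b) and (d): by Theorem~\ref{parity-incomparable}, any $G \gtrsim H$ already forces $\pi(G) = \pi(H)$, so the final outcomes $\lfout$ and $\rfout$ each coincide with one of $\lout, \rout$ depending on this common parity, and the outcome inequalities in (a) translate mechanically to the final-outcome inequalities in (b) and (d).

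It remains to verify three implications. For (b) $\Rightarrow$ (e), specialize $X = -G$ in the defining inequality of $\gtrsim_-$, obtaining $\rfout(G - G) \ge \rfout(H - G)$. Both $G - G$ and $H - G$ are even-tempered, so $\rfout(G - G) = \lout(G - G) = 0$ (using $G - G \approx 0$) and $\rfout(H - G) = \lout(H - G)$, whence $\lout(H - G) \le 0$. For (d) $\Rightarrow$ (c), specialize $X = -H$ in the defining inequality of $\gtrsim_+$: both $H - H$ and $G - H$ are even-tempered, so $\lfout(H - H) = 0$ and $\lfout(G - H) = \rout(G - H)$, whence $\rout(G - H) \ge 0$. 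Finally, for (c) $\Rightarrow$ (a), the game $G - H$ lies in $\mathcal{I}$, is even-tempered because $\pi(G) = \pi(H)$, and has $\rout(G - H) \ge 0$, so Lemma~\ref{i-games-nice} gives $G - H \gtrsim 0$; adding $H$ via the compatibility statement in Lemma~\ref{obvious-compatibles} yields $G \gtrsim H$.

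There is no deep obstacle; the only care required is to verify that the relevant differences ($G - G$, $H - G$, $G - H$, $H - H$) are all even-tempered, which lets us convert each $\lfout$ or $\rfout$ into the corresponding $\rout$ or $\lout$ so that Lemma~\ref{i-games-nice} becomes applicable.
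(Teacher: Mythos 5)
Your proof is correct and follows essentially the same route as the paper: establish $G-G\approx 0$ from Lemma~\ref{mirrors} and Lemma~\ref{i-games-nice}, then close the cycle of implications by specializing the test game $X$ to $-G$ or $-H$ and finishing with Lemma~\ref{i-games-nice} plus the compatibility of addition. Your bookkeeping is in fact slightly more careful than the paper's: by pairing (b) with (e) and (d) with (c), you correctly account for the fact that $\rfout$ and $\lfout$ of an even-tempered game are $\lout$ and $\rout$ respectively, which is exactly the point where the paper's one-line derivation of (b)$\Rightarrow$(c) is terse.
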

\begin{proof}
Since $G$ and $-G$ are in $\mathcal{I}$ and $\pi(G) = \pi(-G)$, $G - G$ is also in $\mathcal{I}$ and is even-tempered.  By Lemma~\ref{mirrors},
$\rout(G - G) \ge 0$.  So by Lemma~\ref{i-games-nice}, $G - G \gtrsim 0$.  By symmetry, $G - G \lesssim 0$, so $G - G \approx 0$.

Next we show that the five statements are equivalent.  $(a) \implies (b)$ is easy, using Theorem~\ref{parity-incomparable}.  For $(b) \implies (c)$,
note that if $G \gtrsim_- H$ then
\[ \rout(G - H) \gtrsim \rout(H - H) = 0,\]
since $H - H \approx 0$.  Also, $\pi(G) = \pi(H)$ by definition of $\gtrsim_-$.  For $(c) \implies (a)$, note that by Lemma~\ref{i-games-nice},
if $\rout(G - H) \ge 0$ and $\pi(G - H) = 0$, then $G - H \gtrsim 0$.  But then by Lemma~\ref{obvious-compatibles} and the fact that $H - H \approx 0$,
\[ G = G + 0 \approx G + (H - H) = H + (G - H) \gtrsim H + 0 = H,\] so $(a)$ holds.

We have shown that $(a)$, $(b)$, and $(c)$ are equivalent.  The equivalence of $(d)$ and $(e)$ follows by symmetry.
\end{proof}
In particular, we see that every element of $\mathcal{I}$ is an invertible element of the monoid $\mathcal{W}_{\mathbb{Z}}/\approx$.  Also,
the relations $\approx$ and $\approx_\pm$ are all equivalent for games in $\mathcal{I}$, as are the relations $\gtrsim$ and $\gtrsim_\pm$.

\subsection{Upsides and Downsides}
Next, we turn to a characterization of the structure of $\mathcal{W}_{\mathbb{Z}}/\approx$ and $\mathcal{W}_{\mathbb{Z}}/\approx_\pm$ in terms
of the group $\mathcal{I}/\approx$.  The key step is the following lemma:
\begin{lemma}\label{gap-rule}
If $G$ is even, and every option of $G$ is in $\mathcal{I}$, and $\lout(G) \le \rout(G)$, then
\[ G \approx_+ \rout(G)\]
\[ G \approx_- \lout(G)\]
\end{lemma}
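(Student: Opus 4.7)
The plan is to prove $G \approx_+ \rout(G)$ by induction on the complexity of $X$; the sister statement $G \approx_- \lout(G)$ then follows by applying the result to $-G$, which shares both hypotheses and has $\rout(-G) = -\lout(G)$, and invoking the negation-compatibility of Lemma~\ref{obvious-compatibles}. Unraveling Definition~\ref{def:half-eq}, what I need is $\lfout(G + X) = \rout(G) + \lfout(X)$ for every game $X$.

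The base case $X \in \mathbb{Z}$ is immediate from (\ref{lkjmiddle}), since $G + X$ is then even. For $X$ not an integer I split on parity. When $X$ is even, $G + X$ is even and the goal is $\rout(G + X) = \rout(G) + \rout(X)$: the $\ge$ direction is (\ref{eer}), and the $\le$ direction is witnessed by the single right option $G + X^R$ where $X^R$ realizes $\rout(X)$, after applying the inductive hypothesis at the odd game $X^R$. When $X$ is odd, the goal is $\lout(G + X) = \rout(G) + \lout(X)$: the $\ge$ direction is witnessed by Left's strategy of moving in $X$ to an $X^L$ realizing $\lout(X)$ and applying the inductive hypothesis to the even game $G + X^L$, while the $\le$ direction requires checking each left option of $G + X$.

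The only delicate step is the upper bound on left options of the form $G^L + X$ when $X$ is odd. Here $G^L$ and $X$ are both odd, so $G^L + X$ is even, and the dual of Lemma~\ref{combo-strat} yields
\[ \rout(G^L + X) \le \rout(G^L) + \lout(X) + \gap_0(G^L). \]
The hypothesis $G^L \in \mathcal{I}$ annihilates $\gap_0(G^L)$, and then $\rout(G^L) \le \lout(G) \le \rout(G)$ — the first inequality always holds, the second is the coldness hypothesis — closes the bound. I expect this to be the main obstacle, as it is the only step where the two nontrivial hypotheses both come into play: the $\mathcal{I}$ condition to sharpen Lemma~\ref{combo-strat}, and $\lout(G) \le \rout(G)$ to rule out any benefit for Left in moving into the cold game $G$.
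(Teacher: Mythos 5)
Your proof is correct, but it is organized differently from the paper's. The paper argues non-inductively: it first records the identity $\gap_0(G) = \rout(G) - \lout(G)$ (valid because all options of $G$ lie in $\mathcal{I}$), and then for arbitrary $X$ sandwiches $\lfout(G+X)$ between $\rout(G)+\lfout(X)$ and itself using one inequality of Lemma~\ref{combo-strat} and one of its duals -- (\ref{eer}) against (\ref{eel}$\star$) when $X$ is even, (\ref{oel}) against (\ref{oer}$\star$) when $X$ is odd -- the point being that the gap term in each dual inequality is exactly cancelled by $\rout(G)-\lout(G)$. You instead induct on $X$, which lets you replace both dual applications at the level of $G$ by the inductive hypothesis at an option of $X$, so that Lemma~\ref{combo-strat} is only ever invoked where the gap term vanishes outright: in (\ref{eer}) for the lower bound in the even case, and in (\ref{ool}$\star$) for the option $G^L+X$, where $\gap_0(G^L)=0$ because $G^L\in\mathcal{I}$ and the chain $\rout(G^L)\le\lout(G)\le\rout(G)$ does the rest. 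Your identification of $G^L+X$ as the one place where both hypotheses interact is exactly right, and your derivation of the $\approx_-$ half from the $\approx_+$ half via $-G$ and Lemma~\ref{obvious-compatibles} matches the paper's appeal to symmetry. What the paper's route buys is brevity (no induction, four cited inequalities); what yours buys is that the hypotheses are used only in their sharpest, most local form, at the cost of carrying the inductive scaffolding and checking each type of option of $G+X$ separately.
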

\begin{proof}
Note that
\begin{equation}
\gap_0(G) = \rout(G) - \lout(G) \label{gap-of-G}
\end{equation}
because the only subgame $G'$ of $G$ for which $\rout(G') - \lout(G')$ could be positive is $G$ itself, as all options of $G$ are in $\mathcal{I}$.

By symmetry, it suffices to prove $G \approx_+ \rout(G)$.
We need to show that if $X$ is any game, then
\[ \lfout(G + X) \stackrel{?}{=} \lfout(\rout(G) + X) = \rout(G) + \lfout(X).\]
If $X$ is even, then by (\ref{eer})
\begin{equation} \lfout(G + X) = \rout(G + X) \ge \rout(G) + \rout(X) = \rout(G) + \lfout(X).\label{eq:temp}\end{equation}
But by (\ref{eel}$\star$) and (\ref{gap-of-G}),
\[ \rout(X + G) \le \rout(X) + \lout(G) + \gap_0(G) = \rout(X) + \lout(G) + \rout(G) - \lout(G) = \rout(X) + \rout(G).\]
So equality holds in (\ref{eq:temp}), and $\lfout(G + X) = \rout(G) + \lfout(X)$ as desired.

Otherwise, $X$ is odd.  Then by (\ref{oel})
\begin{equation} \lfout(G + X) = \lout(X + G) \ge \lout(X) + \rout(G) = \rout(G) + \lfout(X).\label{eq:temp2}\end{equation}
But by (\ref{oer}$\star$) and (\ref{gap-of-G}),
\[ \lout(X + G) \le \lout(X) + \lout(G) + \gap_0(G) = \lout(X) + \lout(G) + (\rout(G) - \lout(G)) = \lout(X) + \rout(G).\]
So equality holds in (\ref{eq:temp2}), and $\lfout(G + X) = \rout(G) + \lfout(X)$ as desired.
\end{proof}
This yields the following key result:
\begin{theorem}\label{sidling}
If $G$ is any game, then there exist games $G^+, G^- \in \mathcal{I}$ with $G^+ \approx_+ G \approx_- G^-$.
If $S \subseteq \mathbb{Z}$ and $G$ is $S$-valued, then $G^+$ and $G^-$ can be chosen to be $S$-valued.
Additionally, $G^+ \gtrsim G^-$.
\end{theorem}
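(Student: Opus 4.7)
The plan is to induct on the game tree of $G$. For the base case, where $G$ is an integer, set $G^+ = G^- = G$: this is already in $\mathcal{I}$, is trivially $S$-valued when $G \in S$, and satisfies all three claims immediately.

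For the inductive step, I split on the parity of $G$. If $G$ is odd-tempered, I take $G^\pm := \{(G^L)^\pm \mid (G^R)^\pm\}$, where the options are supplied by the inductive hypothesis. Because $G^\pm$ is odd, every even subgame of $G^\pm$ is a subgame of one of its (even) options, each of which is in $\mathcal{I}$; hence $\gap_0(G^\pm) = 0$ and $G^\pm \in \mathcal{I}$. The half-equivalences $G^+ \approx_+ G$ and $G^- \approx_- G$ follow from the build-compatibility provided by Lemma~\ref{obvious-compatibles}, and $G^+ \gtrsim G^-$ follows from the analogous build-compatibility of $\gtrsim$ applied to the inductive hypothesis $(G^L)^+ \gtrsim (G^L)^-$ (and similarly for right options). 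The $S$-valued property is obviously preserved.

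The even-tempered case is the heart of the proof. First form the preliminary candidates $G^+_0 := \{(G^L)^+ \mid (G^R)^+\}$ and $G^-_0 := \{(G^L)^- \mid (G^R)^-\}$. Build-compatibility gives $G^+_0 \approx_+ G$, $G^-_0 \approx_- G$, and $G^+_0 \gtrsim G^-_0$; their options lie in $\mathcal{I}$ by induction. However, $G^\pm_0$ itself may fail to lie in $\mathcal{I}$, because its own contribution $\rout - \lout$ could be positive. The key dichotomy is this: if $\lout(G^+_0) > \rout(G^+_0)$, then this contribution is negative while the options' contributions are zero, so $\gap_0(G^+_0) = 0$ already and I set $G^+ := G^+_0$; otherwise $\lout(G^+_0) \le \rout(G^+_0)$, and Lemma~\ref{gap-rule} yields $G^+_0 \approx_+ \rout(G^+_0)$, so I set $G^+$ to be that integer, which lies in $\mathcal{I}$ and is $S$-valued because $\rout$ of any game is attained at some integer leaf. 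A symmetric construction, collapsing to $\lout(G^-_0)$ in the cold case, produces $G^-$.

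For the final claim $G^+ \gtrsim G^-$, note that $G^+_0 \gtrsim G^-_0$ is already in hand. Whenever the collapse applies to $G^+_0$, Lemma~\ref{gap-rule} gives $G^+_0 \approx_+ \rout(G^+_0)$; but both games lie in $\mathcal{I}$ with matching parities, so Theorem~\ref{i-games-nice-2} upgrades this to $G^+_0 \approx \rout(G^+_0)$, and similarly for $G^-_0 \approx \lout(G^-_0)$ whenever that collapse applies. In all four collapse-or-not combinations the chain $G^+ \approx G^+_0 \gtrsim G^-_0 \approx G^-$ yields $G^+ \gtrsim G^-$. The principal obstacle is the even case: one has to recognise the cold/hot dichotomy and observe that in the hot case the pre-candidate already lies in $\mathcal{I}$, since $\gap_0$ is a supremum and only sees nonnegative contributions.
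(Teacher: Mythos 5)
Your existence argument is essentially the paper's: the same induction, the same replacement of options by invertible left/right-equivalents via Lemma~\ref{obvious-compatibles}, and the same collapse to $\rout$ (resp.\ $\lout$) via Lemma~\ref{gap-rule} when the even-tempered candidate is cold. That part is fine, including the observation that $\rout(H)$ of an $S$-valued game lies in $S$.

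The gap is in your proof of $G^+ \gtrsim G^-$. In the collapse case you assert that $G^+_0 \approx \rout(G^+_0)$ because ``both games lie in $\mathcal{I}$'' and Theorem~\ref{i-games-nice-2} upgrades $\approx_+$ to $\approx$. But $G^+_0$ is \emph{not} in $\mathcal{I}$ in the collapse case --- that is exactly why you collapse --- so the theorem does not apply; and the conclusion is genuinely false whenever $\lout(G^+_0) < \rout(G^+_0)$, since Lemma~\ref{gap-rule} also gives $G^+_0 \approx_- \lout(G^+_0) \ne \rout(G^+_0)$, so $G^+_0$ cannot be fully equivalent to the single integer $\rout(G^+_0)$. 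Hence your chain $G^+ \approx G^+_0 \gtrsim G^-_0 \approx G^-$ breaks at both ends. The repair within your inductive framework is to run the chain in $\gtrsim_+$ only: $G^+ \approx_+ G^+_0 \gtrsim_+ G^-_0 \approx_+ \rout(G^-_0) \gtrsim_+ \lout(G^-_0) = G^-$ (using that $m \gtrsim_+ n$ for integers $m \ge n$), and then, since $G^+$ and $G^-$ both lie in $\mathcal{I}$, invoke Theorem~\ref{i-games-nice-2} to upgrade $\gtrsim_+$ to $\gtrsim$. The paper avoids the case analysis entirely with a different argument: from $G \approx_+ G^+$ and $G \approx_- G^-$ one gets $G - G \approx_+ G^+ - G^-$, so $\rout(G^+ - G^-) = \lfout(G - G) = \rout(G-G) \ge 0$ by Lemma~\ref{mirrors}, and Theorem~\ref{i-games-nice-2} concludes. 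You may want to adopt that route, as it needs no tracking of which candidates collapsed.
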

\begin{proof}
By symmetry, we only show the existence of $G^+$.  We proceed by induction on $G^+$.  If $G$ is an integer,
then we can take $G^+ = G$.  Otherwise, by induction every option of $G$ is left-equivalent to an $S$-valued game in $\mathcal{I}$.
Let $H$ be the game obtained from $G$ by replacing every option of $G$ with such a game.  Then by Lemma~\ref{obvious-compatibles}
applied to $\approx_+$, $G$ is left-equivalent to $H$.  Since $H$ is an $S$-valued game, we are done unless $H$ fails to be in $\mathcal{I}$.
But every option of $H$ is in $\mathcal{I}$, so this can only occur if $H$ is even and $\lout(H) < \rout(H)$.  In this case, we have
\[ G \approx_+ H \approx_+ \rout(H)\]
by the previous lemma.  Since $\rout(H) \in S$, we can take the integer $\rout(H)$ to be $G^+$.

For the final claim, note that $G \approx_- G^-$.  Therefore, by Lemma~\ref{obvious-compatibles}
$-G \approx_+ -(G^-)$ and so $G - G \approx_+ (G^+ - G^-)$.  But by Lemma~\ref{mirrors},
\[ \rout(G^+ - G^-) = \lfout(G^+ - G^-) = \lfout(G - G) = \rout(G - G) \ge 0,\]
so $G^+ \gtrsim G^-$ by Theorem~\ref{i-games-nice-2}.
\end{proof}
Note that if $H, K$ are two games in $\mathcal{I}$, both left-equivalent to $G$, then $H \approx_+ K$ so $H \approx K$ by Theorem~\ref{i-games-nice-2}.
Therefore the $G^\pm$ of the previous theorem are determined uniquely up to equivalence.
\begin{definition}
If $G$ is a well-tempered game, we let $G^+$ and $G^-$ be the games of the previous theorem.  These are defined only up to equivalence.
We call $G^+$ the \emph{upside} of $G$ and $G^-$ the \emph{downside} of $G$, by analogy with the theory of loopy games.
\end{definition}
Note that $G^+$ and $G^-$ depend only on $G$ up to $\approx$, i.e., if $G \approx H$ then $G^\pm \approx H^\pm$.  Also, if $G \in \mathcal{I}$,
then $G^+$ and $G^-$ can be taken to be $G$.

Let $\mathcal{W} = \mathcal{W}_\mathbb{Z}$.  For $\mathcal{G}$ a set of games, we use $\mathbf{G}$, $\mathbf{G}^+$, and $\mathbf{G}^-$
to denote the quotients $\mathcal{G}/\approx$, $\mathcal{G}/\approx_+$ and $\mathcal{G}/\approx_-$.  These have natural structures as posets,
coming from the relations $\lesssim$ and $\lesssim_\pm$.  If $\mathcal{G}$ is closed under addition,
then these three quotients will also be monoids.

Using upsides and downsides,
we get a fairly complete description of $\mathbf{W}$ and $\mathbf{W}^\pm$ in terms of $\mathbf{I}$.
\begin{theorem}\label{structure}
The posets $\mathbf{I}$, $\mathbf{I}^+$, and $\mathbf{I}^-$ are all equal, $\mathbf{I}$ is a partially-ordered abelian group, and the inclusions
\begin{equation} \mathbf{I} = \mathbf{I}^\pm \hookrightarrow \mathbf{W}^\pm \end{equation}
are isomorphisms of partially ordered monoids.  In particular, $\mathbf{W}^+$ and $\mathbf{W}^-$ are groups.  Also, let
\[ \mathbf{I}_2 = \{(x,y) \in \mathbf{I} \times \mathbf{I}~:~ x \ge y\}.\]
Make $\mathbf{I}_2$ into a partially ordered monoid by taking the product ordering and componentwise addition.  Then the map
\[ \mathbf{W} \to \mathbf{I}_2\]
induced by $G \mapsto (G^+,G^-)$ is an isomorphism of partially ordered monoids.  Under this identification,
the negative of $(x,y)$ is $(-y,-x)$, and the image of
\[ \mathbf{I} \hookrightarrow \mathbf{W} \to \mathbf{I}_2\]
is the diagonal $\{(x,x)~:~ x \in \mathbf{I}\}$.
\end{theorem}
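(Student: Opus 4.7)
The theorem bundles several assertions, and I would prove them in a cascade: first establish $\mathbf{I}$ as a partially ordered abelian group, then upgrade the inclusions $\mathbf{I}\hookrightarrow\mathbf{W}^\pm$ to isomorphisms via the sidling theorem, and finally build the pair-map $\Phi\colon \mathbf{W}\to\mathbf{I}_2$ by $G\mapsto (G^+,G^-)$, with the deepest step being the surjectivity of $\Phi$.

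The equalities $\mathbf{I}=\mathbf{I}^+=\mathbf{I}^-$ of posets are immediate from Theorem~\ref{i-games-nice-2}, which collapses $\approx,\approx_+,\approx_-$ and $\gtrsim,\gtrsim_+,\gtrsim_-$ on $\mathcal{I}$. That $\mathbf{I}$ is a partially ordered abelian group follows because Theorem~\ref{gap-sums} gives closure of $\mathcal{I}$ under $+$, identity~(\ref{gap-neg}) gives closure under negation, Theorem~\ref{i-games-nice-2} supplies inverses via $G+(-G)\approx 0$, and Lemma~\ref{obvious-compatibles} handles order-compatibility. The inclusions $\mathbf{I}\hookrightarrow\mathbf{W}^\pm$ are surjective by Theorem~\ref{sidling} and injective because $\approx_\pm$ coincides with $\approx$ on $\mathcal{I}$; hence they are isomorphisms of partially ordered monoids, transferring the group structure to each $\mathbf{W}^\pm$.

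For $\Phi$, well-definedness follows from uniqueness of $G^\pm$ up to $\approx$ (noted just after Theorem~\ref{sidling}); it lands in $\mathbf{I}_2$ because $G^+\gtrsim G^-$; it is a monoid homomorphism since $G+H\approx_\pm G^\pm+H^\pm$ by Lemma~\ref{obvious-compatibles}, forcing $(G+H)^\pm\approx G^\pm+H^\pm$ by uniqueness; it is injective because $\Phi(G)=\Phi(H)$ yields $G\approx_+ H$ and $G\approx_- H$, hence $G\approx H$ by the remark after Definition~\ref{def:half-eq}; and it preserves and reflects the order by the analogous argument applied to $\gtrsim_\pm$. The negation formula $\Phi(-G)=(-G^-,-G^+)$ drops out from
\[ \lfout(-G+X)=-\rfout(G+(-X))=-\rfout(G^-+(-X))=\lfout(-G^-+X), \]
which shows $-G\approx_+ -G^-$ and hence $(-G)^+\approx -G^-$; the diagonal claim is then immediate, since $G\in\mathcal{I}$ forces $G^+\approx G\approx G^-$.

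The main obstacle is surjectivity of $\Phi$ onto $\mathbf{I}_2$: given $(x,y)\in\mathbf{I}_2$, I must produce a game $G$ with $G^+\approx x$ and $G^-\approx y$. The pure integer case $x=m\ge n=y$ is dispatched by $G=\{\{m|n\}\,|\,\{m|n\}\}$: the odd option $\{m|n\}$ lies in $\mathcal{I}$, while $G$ is even with $\lout(G)=n<m=\rout(G)$, so Lemma~\ref{gap-rule} yields $G^+\approx m$ and $G^-\approx n$. For general $(x,y)$, I would first reduce to the case $(z,0)$ with $z\gtrsim 0$ by writing $(x,y)=(y,y)+(x-y,0)$ and using that $(y,y)$ comes from $G=y\in\mathcal{I}$. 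For $(z,0)$ with $z$ genuinely non-integer I would build $G$ inductively, so that one option-structure encodes $z$'s left behavior (forcing the game to play out like $z$ when Left is the last mover) and the other encodes a vanishing right behavior (forcing it to contribute nothing when Right is last). Verifying that the resulting $G$ actually realizes $(z,0)$ is the technical heart, and I expect it to require a delicate induction on the complexity of $z$ together with careful bookkeeping using Lemma~\ref{gap-rule} and the inequalities of Lemma~\ref{combo-strat}.
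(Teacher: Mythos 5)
Your decomposition mirrors the paper's: the equalities of posets and group structure from Theorem~\ref{i-games-nice-2}, the isomorphism $\mathbf{I}\cong\mathbf{W}^\pm$ from Theorem~\ref{sidling}, and the bundle of claims about the pair map $\Phi$ verified one by one. The well-definedness, homomorphism property, injectivity, order-preservation and -reflection, the negation formula, and the diagonal claim are all correct and match the paper, which handles them in a bulleted list citing the same lemmas. Your negation calculation $\lfout(-G+X)=-\rfout(G-X)=-\rfout(G^- - X)=\lfout(-G^-+X)$ is the right chain and is slightly more explicit than what the paper writes down.

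The genuine issue is surjectivity. You correctly identify it as the hard step and correctly reduce to realizing $(z,0)$ with $z\gtrsim 0$ via $(x,y)=(y,y)+(x-y,0)$ --- this reduction is implicitly what the paper does by passing to $\Delta=G-H$. You also produce the right kind of base construction: your $\{\{m|n\}\,|\,\{m|n\}\}$ and the paper's $Q_n=\{\{0|0\}|\{n|n\}\}$ are essentially the same trick of forcing an even game whose left and right outcomes differ so Lemma~\ref{gap-rule} decouples its upside and downside. But for non-integer $z$ you gesture at a ``delicate induction'' without giving a construction, and in fact a naive induction runs into real trouble: the options of $z$ are odd-tempered, there is no analogue of Lemma~\ref{gap-rule} at odd levels, and you need to simultaneously control $\rout$, $\lout$, and membership in $\mathcal{I}$ across all subgames. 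The paper's actual resolution (deferred to Theorem~\ref{pairs-occur}) is not an induction on $z$ at all: it substitutes $Q_n$ for every positive integer leaf of $\Delta$ to build $\Delta'$, then identifies $\Delta'\approx_+\Delta$ and $\Delta'\approx_-\tilde f(\Delta)$ for $f(n)=\min(0,n)$, and shows $\tilde f(\Delta)\approx 0$ using the theory of generalized disjunctive operations (Theorem~\ref{generalized-compatibility}) from the next section. That machinery --- in particular, that $\tilde f$ descends to $\approx$ and sandwiching $\tilde f(\Delta)$ between $\tilde g(\Delta)\approx 0$ and $\tilde f(0)=0$ --- is the missing idea. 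Without it, or some substitute, your surjectivity argument remains a program rather than a proof, so the proposal has a real gap at exactly the point you flagged as the technical heart.
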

\begin{proof}
The fact that $\mathbf{I}$, $\mathbf{I}^+$, and $\mathbf{I}^-$ are equal stems from the fact that $\lesssim_-$, $\lesssim_+$, and $\lesssim$
are all the same relation on $\mathcal{I}$, by Theorem~\ref{i-games-nice-2}.  The fact that $\mathcal{I}$ is a group follows from the fact
that if $G \in \mathcal{I}$, then $-G \in \mathcal{I}$ and $G + (-G) \approx 0$ by Theorem~\ref{i-games-nice-2}.  The inclusion
$\mathbf{I}^+ \to \mathbf{W}^+$ (say) is surjective by Theorem~\ref{sidling}.  It is injective and strictly order-preserving, trivially.

The remaining statements are equivalent to the following claims:
\begin{itemize}
\item If $G \in \mathcal{W}$, then $G^+ \gtrsim G^-$.  This is part of Theorem~\ref{sidling}.
\item If $G \approx H$, then $G^+ \approx H^+$ and $G^- \approx H^-$.  More generally,
\[ G \gtrsim H \iff G^+ \gtrsim H^+ \wedge G^- \gtrsim H^-.\]
This follows from the facts that
\[ G \gtrsim_\pm H \iff G^\pm \gtrsim_\pm H^\pm \iff G^\pm \gtrsim H^pm.\]
The first $\iff$ is trivial and the second $\iff$ follows from Theorem~\ref{i-games-nice-2}.
\item If $G, H \in \mathcal{I}$ and $G \gtrsim H$, then there exists a game $K$ with $K^+ \approx G$ and $K^- \approx H$, i.e., $K \approx_+ G$
and $K \approx_- H$.  We defer the proof of this to Theorem~\ref{pairs-occur} in the next section.
\item If $G, H$ are games, then
\[ (G + H)^\pm \approx G^\pm + H^\pm.\]
Since $\mathcal{I}$ is closed under addition, this amounts to showing that if $G \approx_\pm G^\pm$ and $H \approx_\pm H^\pm$, then $G + H \approx_\pm G^\pm + H^\pm$,
which follows from Theorem~\ref{obvious-compatibles}.
\item If $G$ is a game, then $-G^\pm = (-G)^\mp$.  This similarly follows from Theorem~\ref{obvious-compatibles}.
\item If $G \in \mathcal{I}$, then $G^+ \approx G^-$, which is clear by taking $G^+ = G^- = G$.
\item If $G^+ \approx G^-$, then $G$ is equivalent to a game in $\mathcal{I}$.  But if $G^+ \approx G^-$, then $G \approx_- G^- \approx_- G^+$, so
$G \approx_- G^+$.  Since $G \approx_+ G^+$ by definition of $G^+$, we have $G \approx G^+ \in \mathcal{I}$.
\end{itemize}
\end{proof}

\begin{corollary}\label{i-stands-for-invertible}
If $G$ is a game, the following are equivalent:
\begin{description}
\item[(a)] $G$ is invertible, in the sense that $G + H \approx 0$ for some $H \in \mathcal{W}$.
\item[(b)] $G \approx K$ for some $K \in \mathcal{I}$.
\item[(c)] $G - G \approx 0$
\end{description}
\end{corollary}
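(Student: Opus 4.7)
The plan is to establish the cycle $(b) \Rightarrow (c) \Rightarrow (a) \Rightarrow (b)$, with only the last implication being substantive; the first two fall straight out of results already in hand.

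For $(b) \Rightarrow (c)$: if $G \approx K$ with $K \in \mathcal{I}$, then Theorem~\ref{i-games-nice-2} applied to $K$ yields $K - K \approx 0$, and compatibility of $\approx$ with addition and negation (Lemma~\ref{obvious-compatibles}) propagates this to $G - G \approx K - K \approx 0$. For $(c) \Rightarrow (a)$: simply take $H = -G$.

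The real work is in $(a) \Rightarrow (b)$. Here I would push the hypothesis $G + H \approx 0$ through the isomorphism $\mathbf{W} \to \mathbf{I}_2$ sending $G \mapsto (G^+, G^-)$ from Theorem~\ref{structure}, which is additive. This yields $G^+ + H^+ \approx 0 \approx G^- + H^-$ in the group $\mathbf{I}$, so $H^\pm \approx -G^\pm$ there. Now invoke the constraint $H^+ \gtrsim H^-$ from Theorem~\ref{sidling} to deduce $-G^+ \gtrsim -G^-$, that is, $G^- \gtrsim G^+$; together with the unconditional $G^+ \gtrsim G^-$ also from Theorem~\ref{sidling}, this forces $G^+ \approx G^-$. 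The penultimate bullet in the proof of Theorem~\ref{structure} then identifies $G$ with a game in $\mathcal{I}$, namely $G \approx G^+$, giving (b).

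The only genuine obstacle is that we cannot ``cancel $G$'' from $G + H \approx 0$ directly, since $\mathbf{W}$ is not yet known to be cancellative. The remedy is exactly the pivot to $\mathbf{I}_2$: on the $\mathbf{I}$ factors we can cancel freely, and the cross-inequality $G^+ \gtrsim G^-$ is what lets the information on $H$ translate back into the structural equality $G^+ \approx G^-$ that (b) demands.
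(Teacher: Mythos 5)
Your proof is correct and follows essentially the same route as the paper's: the same cycle $(b)\Rightarrow(c)\Rightarrow(a)\Rightarrow(b)$, with the key step $(a)\Rightarrow(b)$ executed by pushing through the isomorphism $\mathbf{W}\to\mathbf{I}_2$ of Theorem~\ref{structure}. The only difference is that the paper invokes as a black box the general fact that invertible elements of $\mathfrak{G}_2 = \{(x,y): x\ge y\}$ over a partially ordered abelian group $\mathfrak{G}$ lie on the diagonal, whereas you carry out that short computation explicitly (cancel in $\mathbf{I}$ to get $H^\pm \approx -G^\pm$, then use $H^+\gtrsim H^-$ together with $G^+\gtrsim G^-$ to force $G^+\approx G^-$).
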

\begin{proof}
The direction $(c) \implies (a)$ is trivial, and $(b) \implies (c)$ was part of Theorem~\ref{i-games-nice-2}.  The direction $(a) \implies (b)$ follows
from the general fact that if $\mathfrak{G}$ is a partially-ordered abelian group and we construct the monoid
\[ \mathfrak{G}_2 = \{(x,y) \in \mathfrak{G} \times \mathfrak{G} : x \ge y\},\]
then the invertible elements of $\mathfrak{G}_2$ are exactly the diagonal elements $(x,x)$.
\end{proof}

To describe the structure of $\mathbf{W}_\mathbb{Z}$, it remains to determine the structure of $\mathbf{I}$.
In \S \ref{sec:psi} we will relate $\mathbf{I}$ to the group $\mathbf{Pg}$ of partizan normal-play games.

\section{Other Disjunctive Operations}\label{sec:distortions}
So far we have focused on the operation of disjunctive addition $G + H$.  In this section, we show that other generalized disjunctive operations
behave like addition.  For example they are compatible with equivalence, and they preserve membership in $\mathcal{I}$.

\subsection{Easy Inductions}\label{basic-distort}
The following facts have easy inductive proofs, which we leave as exercises to the reader:
\begin{description}
\item[(a)] If $f : S \to T$ is order-preserving and $G$ is an $S$-valued game, then $\rout(\tilde{f}(G)) = f(\rout(G))$ and similarly for $\lout(\cdot)$, $\rfout(\cdot)$,
and $\lfout(\cdot)$.
\item[(b)] If $f, g : S_1 \times \cdots \times S_n \to T$ are two order preserving functions and $f(x_1,\ldots,x_n) \le g(x_1,\ldots,x_n)$ for all $x_i$, then
\[ \tilde{f}(G_1,\ldots,G_n) \lesssim \tilde{g}(G_1,\ldots,G_n).\]
\item[(c)] If $f : S_1 \times \cdots \times S_n \to T$ is a function (not necessarily order-preserving), $\sigma$ is a permutation of $\{1,\ldots,n\}$, and
$g : S_{\sigma(1)} \times \cdots \times S_{\sigma(n)} \to T$ is the function given by
\[ g(x_{\sigma(1)},\ldots,x_{\sigma(n)}) = f(x_1,\ldots,x_n),\]
then
\[ \tilde{g}(G_{\sigma(1)},\ldots,G_{\sigma(n)}) = \tilde{f}(G_1,\ldots,G_n)\]
\item[(d)] If $f : S_1 \times \cdots \times S_n \to T$ is a function, $g : R_1 \times \cdots \times R_m \to S_j$
is another function, and
\[ h : S_1 \times \cdots \times S_{j-1} \times R_1 \times \cdots \times R_m \times S_{j+1} \times \cdots \times S_n \to T\]
is the function given by
\[ h(x_1,\ldots,x_{j-1},y_1,\ldots,y_m,x_{j+1},\ldots,x_n) = f(x_1,\ldots,x_{j-1},g(y_1,\ldots,y_m),x_{j+1},\ldots,x_n),\]
then
\[ \tilde{h}(G_1,\ldots,G_{j-1},H_1,\ldots,H_m,G_{j+1},\ldots,G_n) = \tilde{f}(G_1,\ldots,G_{j-1},\tilde{g}(H_1,\ldots,H_m),G_{j+1},\ldots,G_n).\]
In particular, if $m = n = 1$ we see that $\widetilde{f \circ g} = \tilde{f} \circ \tilde{g}$.
\item[(e)] If $f : S_1 \times \cdots \times S_n \times T$ is a function and $g : (-S_1) \times \cdots \times (-S_n) \to (-T)$ is the function
\[ g(x_1,\ldots,x_n) = -f(-x_1,\ldots,-x_n),\]
then
\[ \tilde{g}(G_1,\ldots,G_n) = -\tilde{f}(-G_1,\ldots,-G_n).\]
\item[(f)] If $g : \mathbb{Z} \to \mathbb{Z}$ is the constant function 0, then $\tilde{g}(G) \approx 0$ for $\pi(G) = 0$ and $\tilde{g}(G) \approx \{0|0\}$
for $\pi(G) = 1$.
\item[(g)] If $f : S_1 \times \cdots \times S_n$ is a function and $*$ is the game $\{0|0\}$, then
\[ \tilde{f}(G_1 + *,\ldots,G_n) = \tilde{f}(G_1,\ldots,G_n) + *\]
\end{description}

\subsection{Equivalence and Generalized Disjunctive Operations}
The following theorem generalizes to $f$ of higher or lower arities, but we only prove the case where $f$ takes two arguments, for notational simplicity.
\begin{theorem}\label{generalized-compatibility}
Let $S_1, S_2 \subseteq \mathbb{Z}$ and $f : S_1 \times S_2 \to \mathbb{Z}$ be order preserving.  Let $G_1, H_1$ be $S_1$-valued games,
and $G_2, H_2$ be $S_2$-valued games.  Let $\Box$ be one of the relations $\gtrsim, \lesssim, \approx, \gtrsim_\pm, \lesssim_\pm, \approx_\pm$.
Suppose that $G_i \Box H_i$ for $i = 1, 2$.  Then
\[ \tilde{f}(G_1,G_2) \Box \tilde{f}(H_1,H_2),\]
where $\tilde{f}$ is the extension of $f$ to games.
\end{theorem}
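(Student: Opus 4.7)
The plan is to perform two preliminary reductions and then prove a single core lemma by induction. First, by transitivity of each of the nine relations $\Box$, it suffices to handle the case where only one argument varies: applied twice, this reduces the theorem to showing that $\tilde{f}(G, K) \Box \tilde{f}(H, K)$ whenever $G \Box H$. Next, observation (e) of \S\ref{basic-distort} combined with identity~(\ref{neg-compat}) converts $\gtrsim_-$ into $\gtrsim_+$ under negation, and Theorem~\ref{parity-incomparable} lets us write $\gtrsim$ as the conjunction $\gtrsim_+ \wedge \gtrsim_-$; the equivalences $\approx, \approx_\pm$ are the corresponding two-sided relations. So the theorem reduces to the single case $\Box = {\gtrsim_+}$.

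Unfolding the definition, $\tilde{f}(G, K) \gtrsim_+ \tilde{f}(H, K)$ asserts that $\lfout(\tilde{f}(G, K) + X) \ge \lfout(\tilde{f}(H, K) + X)$ for every $X$ (the parity constraint being immediate). By observation (d) of \S\ref{basic-distort}, $\tilde{f}(G, K) + X = \tilde{g}(G, K, X)$ where $g(a, b, c) = f(a, b) + c$ is still order-preserving, so the universally quantified test game $X$ is absorbed into the function. Thus the theorem is equivalent to the following \textbf{core lemma}: for every order-preserving $g$ of any arity, any games $G \gtrsim_+ H$, and any compatibly-typed tuple $\vec{K}$, one has $\lfout(\tilde{g}(G, \vec{K})) \ge \lfout(\tilde{g}(H, \vec{K}))$.

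I would prove this core lemma by induction on the total complexity of $\vec{K}$. In the base case when every $K_i$ is an integer, $\tilde{g}(G, \vec{K})$ collapses to $\tilde{h}(G)$ for the one-variable order-preserving $h(\cdot) = g(\cdot, \vec{K})$; observation (a) of \S\ref{basic-distort} computes $\lfout(\tilde{h}(G)) = h(\lfout(G))$, and since $\lfout(G) \ge \lfout(H)$ follows from the hypothesis at $X = 0$, order-preservation of $h$ closes this case. In the inductive step, I would expand $\lfout(\tilde{g}(G, \vec{K}))$ as a min or max over options (depending on parity) and compare with the analogous expansion on the $H$-side. Options moving within some $K_i$-slot match across the two expansions and reduce directly to the induction hypothesis, using that the parity flip of an option converts the outcome functions $\lout$ or $\rout$ into $\lfout$.

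The main obstacle will be handling options that move in the $G$- or $H$-slot, since the hypothesis $G \gtrsim_+ H$ is global and does not by itself yield any relationship between individual options of $G$ and $H$. I expect to overcome this by a strategy-stealing argument in the spirit of the ``easy induction on $X$'' invoked in Lemma~\ref{obvious-compatibles}(\ref{build-compat}) together with the parallel-play technique of Lemma~\ref{combo-strat}: one simulates optimal play in $\tilde{g}(H, \vec{K})$ and translates each move in the $H$-slot into a response in $\tilde{g}(G, \vec{K})$, invoking $G \gtrsim_+ H$ with a test game $Y$ that encodes the current $\vec{K}$-state. Making this precise will likely require an auxiliary inner induction on the complexity of the pair $(G, H)$; this is the crux of the proof and the step where care is genuinely needed.
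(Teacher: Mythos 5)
Your two preliminary reductions (reducing to a single varying argument, and reducing to $\Box = {\gtrsim_+}$) are exactly the paper's, and they are fine. The divergence comes in what you call the core lemma, and the obstacle you flag in your last paragraph is not a technicality you will ``overcome with care'' --- it is a genuine gap, and the hand-waved strategy-stealing argument does not close it. The problem, as you yourself observe, is that $G \gtrsim_+ H$ gives no relation between individual options of $G$ and $H$, so when the option taken in the expansion of $\lfout(\tilde{g}(G,\vec{K}))$ is in the $G$-slot, the induction hypothesis (which is only on the complexity of $\vec{K}$) has nothing to say. An inner induction on $(G,H)$ does not rescue this either, since again $\gtrsim_+$ does not descend to options. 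To invoke $G \gtrsim_+ H$ at all you need to exhibit a \emph{single additive test game} $Y$ such that the comparison you want is literally the statement $\lfout(G+Y) \ge \lfout(H+Y)$; a min/max-over-options expansion never produces that.

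The paper closes exactly this gap with a device you did not find: threshold functions together with Lemma~\ref{technical}. Suppose for contradiction $\lfout(\tilde{f}(G,X)+Y) < n \le \lfout(\tilde{f}(H,X)+Y)$. Write $g(w,x,y) = f(w,x)+y$ and compose with the step function $\delta_n$, giving an order-preserving $\{0,1\}$-valued map. Lemma~\ref{technical} (using finiteness of $S_1$, harmless since the games involve finitely many integers) produces an $h(x,y)$ with $\delta_n(g(w,x,y)) = \delta_0(w + h(x,y))$, i.e.\ it \emph{linearizes} the $w$-dependence up to thresholding. Lifting through $\tilde{(\cdot)}$ by observations (c), (d), (a) of \S\ref{basic-distort}, one gets $\delta_n(\lfout(\tilde{f}(G,X)+Y)) = \delta_0(\lfout(G + \tilde{h}(X,Y)))$, so the supposed strict inequality becomes $\lfout(G + \tilde{h}(X,Y)) < 0 \le \lfout(H + \tilde{h}(X,Y))$, which directly contradicts $G \gtrsim_+ H$ with the explicit test game $\tilde{h}(X,Y)$. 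No induction on options is needed at all; the whole content is in manufacturing that test game. Without some version of this linearization, your induction is stuck precisely where you say it is.
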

\begin{proof}
Because all nine relations can be defined in terms of $\gtrsim_+$ and $\gtrsim_-$, we can assume $\Box$ is $\gtrsim_\pm$.  By symmetry,
we can take $\Box = \gtrsim_+$.

Suppose we can show that for $G, H, X$ appropriately-valued,
\[ G \gtrsim_+ H \implies \tilde{f}(G,X) \gtrsim_+ \tilde{f}(H,X)\]
Then by symmetry (interchanging the two arguments of $f$),
\[ G \gtrsim_+ H \implies \tilde{f}(X,G) \gtrsim_+ \tilde{f}(X,H).\]
Combining these, we get
\[ \tilde{f}(G_1,H_1) \gtrsim_+ \tilde{f}(G_1,H_2) \gtrsim_+ \tilde{f}(G_2,H_2)\]
as desired.

We are reduced to showing that if $f : S_1 \times S_2 \to \mathbb{Z}$ is order-preserving and $G \gtrsim_+ H$, then
\[ \tilde{f}(G,X) \gtrsim_+ \tilde{f}(H,X).\]
Since only finitely many integers occur within $G, H, X$, we can assume without loss of generality that $S_1$ and $S_2$ are finite,
restricting the domain of $f$ if necessary.

We need to show that for any game $Y$,
\[ \lfout(\tilde{f}(G,X) + Y) \ge \lfout(\tilde{f}(H,X) + Y).\]
Suppose not.  Then there is some $n$ such that
\[ \lfout(\tilde{f}(G,X) + Y) < n\]
\[ \lfout(\tilde{f}(H,X) + Y) \ge n.\]
Let $\delta_m : \mathbb{Z} \to \{0,1\}$ be the order-preserving function that sends $x$ to $1$ if $x \ge m$ and $x$ to $0$ if $x < m$.
By Lemma~\ref{technical} applied to the function $g(w,x,y) = \delta_n(f(w,x) + y)$,
there is a function $h : S_2 \times \mathbb{Z} \to \mathbb{Z}$ such that
\[ \delta_n(f(w,x) + y) = \delta_0(w + h(x,y))\]
for all $(w,x,y) \in S_1 \times S_2 \times \mathbb{Z}$.  Then by several applications of (c) and (d) of \S \ref{basic-distort},
\[ \tilde{\delta}_n(\tilde{f}(G,X) + Y) = \tilde{\delta}_0(G + \tilde{h}(X,Y))\]
\[ \tilde{\delta}_n(\tilde{f}(H,X) + Y) = \tilde{\delta}_0(H + \tilde{h}(X,Y)).\]
But by (a) of \S \ref{basic-distort}, it follows that
\[ 0 = \delta_n(\lfout(\tilde{f}(G,X) + Y)) = \delta_0(\lfout(G + \tilde{h}(X,Y)))\]
\[ 1 = \delta_n(\lfout(\tilde{f}(H,X) + Y)) = \delta_0(\lfout(H + \tilde{h}(X,Y))).\]
Then
\[ \lfout(G + \tilde{h}(X,Y)) < \lfout(H + \tilde{h}(X,Y)),\]
contradicting $G \gtrsim_+ H$.
\end{proof}
\begin{lemma}\label{technical}
Let $S_1, S_2, S_3$ be subsets of $\mathbb{Z}$, with $S_1$ finite.  Let $g : S_1 \times S_2 \times S_3 \to \{0,1\}$ be order-preserving.
Then there is a function $h : S_2 \times S_3 \to \{0,1\}$ such that for every $(x,y,z) \in S_1 \times S_2 \times S_3$,
\[ g(x,y,z) = 1 \iff x + h(y,z) \ge 0\]
\end{lemma}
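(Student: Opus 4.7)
The plan is to observe that $g$ being order-preserving in its first variable means that, for each fixed $(y,z) \in S_2 \times S_3$, the function $x \mapsto g(x,y,z)$ is a \emph{threshold} function on the finite set $S_1$. Concretely, set $A_{y,z} = \{x \in S_1 : g(x,y,z) = 1\}$; monotonicity in $x$ makes $A_{y,z}$ upward-closed in $S_1$, and since $S_1$ is finite the set $A_{y,z}$ is either empty or has a well-defined minimum element in $\mathbb{Z}$.

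I would then define
\[ h(y,z) = \begin{cases} -\min A_{y,z} & \text{if } A_{y,z} \ne \emptyset, \\ -\max(S_1) - 1 & \text{if } A_{y,z} = \emptyset. \end{cases} \]
In the nonempty case, writing $x^\ast = \min A_{y,z}$, upward-closedness gives $x + h(y,z) \ge 0 \iff x \ge x^\ast \iff x \in A_{y,z} \iff g(x,y,z) = 1$. In the empty case, $x + h(y,z) = x - \max(S_1) - 1 < 0$ for every $x \in S_1$, which agrees with $g(x,y,z) = 0$ throughout $S_1$. Hence the biconditional holds for every $(x,y,z) \in S_1 \times S_2 \times S_3$.

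I would also record, although the bare statement of the lemma does not demand it, that the $h$ built above is itself order-preserving in $(y,z)$: if $(y,z) \le (y',z')$ componentwise, then $A_{y,z} \subseteq A_{y',z'}$ by monotonicity of $g$, so either both sets are empty and $h$ takes the same value, or $A_{y',z'}$ is nonempty and $\min A_{y',z'} \le \min A_{y,z}$ (with the convention $\min\emptyset = +\infty$), yielding $h(y,z) \le h(y',z')$ in every case. This order-preservation is what makes $\tilde{h}$ behave as a generalized disjunctive operation in the sense used by Theorem~\ref{generalized-compatibility}.

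The only real obstacle is the degenerate case where $g(\cdot,y,z) \equiv 0$: the biconditional then requires an integer $h(y,z)$ small enough to force $x + h(y,z) < 0$ for every $x \in S_1$, which is exactly why the hypothesis that $S_1$ is finite cannot be dropped. Beyond that, the lemma is a routine restatement of the fact that an order-preserving $\{0,1\}$-valued function on a finite totally ordered subset of $\mathbb{Z}$ is determined by a single integer cut-off, and the construction above simply packages that cut-off as $h(y,z)$.
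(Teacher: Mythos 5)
Your proof is correct and is essentially the paper's argument: both exploit that $x \mapsto g(x,y,z)$ is an upward-closed threshold function on the finite set $S_1$ and take $h(y,z)$ to be the negative of the cut-off, with the paper handling the empty case by adjoining a sentinel element $M > \max(S_1)$ where you instead set $h(y,z) = -\max(S_1)-1$ directly. The two are the same construction in different packaging, and your added observation that $h$ is order-preserving is a harmless bonus.
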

\begin{proof}
Let $M$ be an integer greater than every element of $S_1$.  Replacing $S_1$ with $S_1 \cup \{M\}$ and extending $g(x,y,z)$ by
$g(M,y,z) = 1$ for all $y,z$, we can assume that for every $y,z$, there is some $x \in S_1$ such that $g(x,y,z) = 1$.
Let $h(y,z)$ be the negative of the smallest such $x$.  Then $g(x,y,z) = 1$ if and only if $x \ge -h(y,z)$, as desired.
\end{proof}

From the theorem (and its generalization to higher arities), we see that if $f : S_1 \times \cdots \times S_n \to T$ is order-preserving,
then $\tilde{f}$ induces well-defined functions
\[ \mathbf{W}_{S_1} \times \cdots \times \mathbf{W}_{S_n} \to \mathbf{W}_{T} \]
\[ \mathbf{W}^+_{S_1} \times \cdots \times \mathbf{W}^+_{S_n} \to \mathbf{W}^+_{T} \]
\[ \mathbf{W}^-_{S_1} \times \cdots \times \mathbf{W}^-_{S_n} \to \mathbf{W}^-_{T} \]
\subsection{Invertible Games and Generalized Disjunctive Operations}
Our next goal is to show that $\mathcal{I}$ is closed under generalized disjunctive operations.  In particular, letting
$\mathcal{I}_S = \mathcal{I} \cap \mathcal{W}_S$, there is a well-defined map
\[ \mathbf{I}_{S_1} \times \cdots \times \mathbf{I}_{S_n} \to \mathbf{I}_{T}.\]
This map characterizes the full map $\mathbf{W}_{S_1} \times \cdots \times \mathbf{W}_{S_n} \to \mathbf{W}_{T}$, since by Theorem~\ref{generalized-compatibility},
\[ \tilde{f}(G_1,\ldots, G_n)^\pm \approx \tilde{f}(G_1^\pm,\ldots,G_n^\pm)\]
assuming that the right hand side lies in $\mathcal{I}$, as we shall prove.

We first prove something that is almost the desired result:
\begin{lemma}\label{almost-there}
Let $f : S_1 \times \cdots \times S_n \to \mathbb{Z}$ be order-preserving, and for each $i$, let $G_i \in \mathcal{I}_{S_i}$.  Then
$\tilde{f}(G_1,\ldots,G_n)$ is equivalent to a game in $\mathcal{I}$.
\end{lemma}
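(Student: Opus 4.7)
The plan is to apply Corollary~\ref{i-stands-for-invertible}: setting $K = \tilde{f}(G_1, \ldots, G_n)$, it suffices to show $K - K \approx 0$, i.e., $\lout(K - K + X) = \lout(X)$ and $\rout(K - K + X) = \rout(X)$ for every game $X$. First I would use property (e) of \S\ref{basic-distort} to rewrite $-K = \tilde{g}(-G_1, \ldots, -G_n)$, where $g(y_1, \ldots, y_n) = -f(-y_1, \ldots, -y_n)$ is again order-preserving. Thus $K - K = \tilde{f}(G_1, \ldots, G_n) + \tilde{g}(-G_1, \ldots, -G_n)$, a disjunctive sum of two generalized disjunctive games whose arguments are mirror images of each other.

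The engine of the proof is a mirror strategy exploiting the bijection between options: a right option $G_i \to G_i^R$ of the $\tilde{f}$-side corresponds to a left option $-G_i \to -G_i^R$ of the $\tilde{g}$-side (using $(-G_i)^L = -G_i^R$), and symmetrically for the other three combinations. To prove $\rout(K - K + X) \ge \rout(X)$, I would have Left (going second) adopt the strategy: mirror any move by Right in $\tilde{f}$ or $\tilde{g}$ by the corresponding move in the other component, and respond to any move by Right in $X$ with her optimal $\rout(X)$-strategy. This preserves the invariant that $\tilde{f}$ and $\tilde{g}$ always occupy dual states; when all the embedded $G_i$ reach integers $a_1, \ldots, a_n$, the two halves evaluate to $f(a_1, \ldots, a_n)$ and $g(-a_1, \ldots, -a_n) = -f(a_1, \ldots, a_n)$, summing to $0$. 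Meanwhile the moves in $X$ are paired as Right-first-Left-second, yielding an $X$-score at least $\rout(X)$, and hence a total at least $\rout(X)$.

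For the first-player inequality $\lout(K - K + X) \ge \lout(X)$, Left instead makes her first move in $X$, choosing $X^L$ with $\rout(X^L) = \lout(X)$, and then deploys the second-player mirror strategy on the game $K - K + X^L$; combined with the previous step, $\lout(K - K + X) \ge \rout(K - K + X^L) \ge \rout(X^L) = \lout(X)$. Symmetric reasoning with Right's mirror strategies yields the reverse inequalities $\lout(K - K + X) \le \lout(X)$ and $\rout(K - K + X) \le \rout(X)$, establishing $K - K \approx 0$ and thus the lemma.

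The main obstacle will be the careful bookkeeping of the mirror invariant as the opponent freely interleaves moves across $\tilde{f}$, $\tilde{g}$, and $X$: one must verify that Left's mirror response is always a legal left option of the current state (which reduces to checking the option-level bijection above is preserved after each paired exchange), and that the subsequence of moves occurring inside $X$ always alternates starting with Right, so that Left's $X$-strategy is indeed being applied as a valid second-player response. Once this invariant is maintained, the arithmetic $f(a_1, \ldots, a_n) + g(-a_1, \ldots, -a_n) = 0$ delivers the conclusion automatically.
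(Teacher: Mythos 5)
Your proposed proof never uses the hypothesis $G_i \in \mathcal{I}$, and that is a fatal sign: if the mirror-strategy argument worked, it would show $K - K \approx 0$ for \emph{every} well-tempered game $K$, which would contradict Corollary~\ref{i-stands-for-invertible} and Theorem~\ref{structure} (the monoid $\mathbf{W}$ is strictly larger than the group of invertible games). So there must be a gap, and it is precisely the bookkeeping obstacle you flagged in your final paragraph --- it cannot in fact be overcome.

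Here is the concrete failure. Suppose $X$ is odd-tempered and consider $\rout(K-K+X)$, so Right moves first. Right can simply play his forced last move in an odd component of $X$ (e.g., take $X = \{-N|N\}$, a single move). After Right's first move the position is $K - K + \rout(X)$ and it is \emph{Left's} turn; Left is now compelled to move \emph{first} in $K - K$, and the best she can achieve there is $\lout(K - K)$. Thus $\rout(K - K + X) \le \lout(K - K) + \rout(X)$. By Lemma~\ref{mirrors}, $\lout(K - K) \le 0$, but in general it is \emph{strictly} negative --- e.g., for $K = \{\{5|-5\}\,|\,\{5|-5\}\}$ one has $\lout(K - K) = -10$ --- so the inequality $\rout(K-K+X) \ge \rout(X)$ fails. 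The reason $\lout(K-K) = 0$ does hold when $K$ is a generalized disjunctive sum of $\mathcal{I}$-games is exactly the content one is trying to prove (it is essentially Theorem~\ref{i-games-closed}, which is downstream of this lemma); you cannot assume it here without circularity. This is also why the paper is careful to state Lemma~\ref{mirrors} only as $\rout(G-G) \ge 0 \ge \lout(G-G)$ rather than $G-G \approx 0$.

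The paper's actual proof is of a completely different character. It stays at the level of equivalences and generalized disjunctive operations rather than explicit play. It introduces the step function $\kappa$, observes that $\tilde{\kappa}(G_i - G_i) \approx \tilde{\kappa}(0) = 0$ \emph{because} $G_i - G_i \approx 0$ (here the hypothesis $G_i \in \mathcal{I}$ via Theorem~\ref{i-games-nice-2} is used in an essential way), and then pointwise compares the order-preserving function
$h_1(x_1,\ldots,x_{2n}) = f(x_1,x_3,\ldots) - f(-x_2,-x_4,\ldots)$
against $h_2(x_1,\ldots,x_{2n}) = \sum_i \kappa(x_{2i-1}+x_{2i})$, using item (b) of \S\ref{basic-distort} to deduce $\tilde{f}(G_1,\ldots,G_n) - \tilde{f}(G_1,\ldots,G_n) \lesssim 0$, and then negation for the other direction. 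To repair your approach you would need to argue that, whenever Left is forced to abandon the mirror posture because $X$ (or some other component) runs out, the residual position in $K - K$ still scores at least $0$ for her --- and establishing that requires exactly the control on $\gap_0(K)$ that the hypothesis $G_i \in \mathcal{I}$ is supposed to provide, not a free-standing mirror argument.
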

\begin{proof}
Since each $G_i$ has finitely many subgames, we can assume without loss of generality that every $S_i$ is finite.

Let $M$ be an integer greater in magnitude than every element of the range of $f$ (which is finite, because we made the $S_i$ finite).
Let $\kappa : \mathbb{Z} \to \mathbb{Z}$ be the function
\[ \kappa(x) = \begin{cases} 2M & x > 0 \\ 0 & x \le 0 \end{cases}.\]
Then since $G_i - G_i \approx 0$,
\[ \tilde{\kappa}(G_i - G_i) \approx \tilde{\kappa}(0) = 0\]
for each $i$, by Theorem~\ref{generalized-compatibility}.
It follows that
\[ \tilde{\kappa}(G_1 - G_1) + \cdots + \tilde{\kappa}(G_n - G_n) \approx 0.\]

Consider the two functions $S_1 \times (-S_1) \times  \cdots \times S_n \times (-S_n) \to \mathbb{Z}$ given by
\begin{align*} h_1(x_1,x_2,\ldots,x_{2n}) &= f(x_1,x_3,\ldots,x_{2n-1}) - f(-x_2,-x_4,\ldots,-x_{2n}) \\
h_2(x_1,x_2,\ldots,x_{2n}) &= \kappa(x_1 + x_2) + \kappa(x_3 + x_4) + \cdots + \kappa(x_{2n-1} + x_{2n}).
\end{align*}
It is straightforward to check that $h_1 \le h_2$.  Therefore, by (b) of \S \ref{basic-distort},
\[ \tilde{h}_1(G_1,-G_1,\ldots,G_n,-G_n) \lesssim \tilde{h}_2(G_1,-G_1,\ldots,G_n,-G_n)\]
However, $\tilde{h}_1(G_1,-G_1,\ldots,G_n,-G_n)$ is nothing but
\[ \tilde{f}(G_1,G_2,\ldots,G_n) - \tilde{f}(G_1,\ldots,G_n)\]
and $\tilde{h}_2(G_1,-G_1,\ldots,G_n,-G_n)$ is nothing but
\[ \tilde{\kappa}(G_1 - G_1) + \cdots + \tilde{\kappa}(G_n - G_n) \approx 0 + \cdots + 0 = 0,\]
by several applications of (c), (d), and (e) of \S \ref{basic-distort}.
Thus
\[ \tilde{f}(G_1,G_2,\ldots,G_n) - \tilde{f}(G_1,\ldots,G_n) \lesssim 0\]
Negating both sides,
\[ \tilde{f}(G_1,G_2,\ldots,G_n) - \tilde{f}(G_1,\ldots,G_n) \gtrsim 0.\]
Therefore $\tilde{f}(G_1,\ldots,G_n)$ is invertible, so by Corollary~\ref{i-stands-for-invertible}, it is equivalent to a game in $\mathcal{I}$.
\end{proof}

Our desired result follows easily:
\begin{theorem}\label{i-games-closed}
If $f : S_1 \times \cdots \times S_n \to \mathbb{Z}$ is order-preserving and $G_i \in \mathcal{I}_{S_i}$ for each $i$,
then $\tilde{f}(G_1,\ldots,G_n) \in \mathcal{I}$.
\end{theorem}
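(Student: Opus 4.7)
The plan is to observe that Lemma~\ref{almost-there} has already done essentially all the work, and the remaining task is to bridge the gap between ``equivalent to a game in $\mathcal{I}$'' and ``actually a member of $\mathcal{I}$.'' Recall that membership in $\mathcal{I}$ is the condition $\gap_0 = 0$, which is a statement about \emph{every} even-tempered subgame of the game in question, whereas $\approx$ only controls the outer outcomes. So the idea is to re-apply Lemma~\ref{almost-there} at every level of the subgame tree.

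First I would note the structural fact that any subgame of $\tilde{f}(G_1,\ldots,G_n)$ has the form $\tilde{f}(H_1,\ldots,H_n)$ where each $H_i$ is a subgame of $G_i$; this is immediate by induction from the recursive definition of $\tilde{f}$, using the fact that when all arguments are integers the expression $\tilde{f}(h_1,\ldots,h_n)$ is just the integer $f(h_1,\ldots,h_n)$. Since $G_i \in \mathcal{I}_{S_i}$ and $\gap_0(H_i) \le \gap_0(G_i) = 0$, every such $H_i$ lies in $\mathcal{I}_{S_i}$ as well. Consequently, Lemma~\ref{almost-there} applies to every subgame $L = \tilde{f}(H_1,\ldots,H_n)$ of $\tilde{f}(G_1,\ldots,G_n)$, producing some $L' \in \mathcal{I}$ with $L \approx L'$.

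Next, I would pin down outcomes and parity. Taking $X = 0$ in the definition of $\approx$ gives $\lout(L) = \lout(L')$ and $\rout(L) = \rout(L')$, and by Theorem~\ref{parity-incomparable} we also have $\pi(L) = \pi(L')$. So if $L$ is an even-tempered subgame of $\tilde{f}(G_1,\ldots,G_n)$, then $L'$ is even-tempered too, and since $L' \in \mathcal{I}$ has $\gap_0(L') = 0$, the subgame $L'$ itself witnesses $\rout(L') - \lout(L') \le 0$; transporting back across $\approx$ yields $\rout(L) \le \lout(L)$. Running this over all even-tempered subgames $L$ of $\tilde{f}(G_1,\ldots,G_n)$ shows $\gap_0(\tilde{f}(G_1,\ldots,G_n)) = 0$, i.e.\ $\tilde{f}(G_1,\ldots,G_n) \in \mathcal{I}$.

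There is not really a hard step here, since Lemma~\ref{almost-there} supplied the nontrivial content (invertibility via the auxiliary functions $h_1, h_2$ and the truncation $\kappa$); the only conceptual point to be careful about is that $\mathcal{I}$ is not an $\approx$-invariant class, so one cannot simply quote Lemma~\ref{almost-there} at the top level and conclude. The mild obstacle is therefore recognizing that the class of iterated $\tilde{f}$-expressions with $\mathcal{I}$-arguments is itself closed under passing to subgames, so that Lemma~\ref{almost-there} can be re-invoked locally and its conclusion combined with the outcome- and parity-preservation of $\approx$ to pin down each even-tempered subgame's gap.
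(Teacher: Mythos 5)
Your proof is correct and takes essentially the same route as the paper: apply Lemma~\ref{almost-there} to every subgame of $\tilde{f}(G_1,\ldots,G_n)$ (using the observation that such subgames are again $\tilde{f}$-expressions with $\mathcal{I}$-valued arguments), then transport $\gap_0 = 0$ across $\approx$ by matching outcomes and parity. The paper states this as a brief auxiliary claim and leaves the structural fact and parity bookkeeping implicit; you make them explicit, but the argument is the same.
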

\begin{proof}
By the previous lemma, every subgame of $\tilde{f}(G_1,\ldots,G_n)$ is equivalent to a game in $\mathcal{I}$.  It therefore suffices
to show that if $H$ is a game and every subgame of $H$ (including $H$) is invertible, then $H \in \mathcal{I}$.  If $H'$ is any
even-tempered subgame of $H$, then $\lout(H') - \rout(H') \ge 0$ because $H' \approx K$ for some $K \in \mathcal{I}$, and $\lout(K) - \rout(K) \ge 0$
by definition of $\mathcal{I}$.
\end{proof}



Another corollary of Theorem~\ref{generalized-compatibility} is the following:
\begin{corollary}\label{same-size}
Let $S$ and $T$ be finite subsets of $\mathbb{Z}$, with $|S| = |T|$.  Let $f$ be the order-preserving bijection
from $S$ to $T$.  Then $\tilde{f} : \mathcal{W}_S \to \mathcal{W}_T$ induces a bijection
\[ \mathbf{W}_S \to \mathbf{W}_T\]
\end{corollary}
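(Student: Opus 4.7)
The plan is to exhibit an explicit inverse on the nose at the level of games, and then descend to the quotient. Since $S$ and $T$ are finite subsets of $\mathbb{Z}$ of the same size and $f : S \to T$ is the order-preserving bijection between them, its set-theoretic inverse $f^{-1} : T \to S$ is also an order-preserving bijection. Hence both $\tilde{f}$ and $\widetilde{f^{-1}}$ are generalized disjunctive operations, and by Theorem~\ref{generalized-compatibility} they descend to well-defined maps on quotients
\[ \tilde{f} : \mathbf{W}_S \to \mathbf{W}_T, \qquad \widetilde{f^{-1}} : \mathbf{W}_T \to \mathbf{W}_S.\]

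Next, I would observe that by fact (d) of \S\ref{basic-distort} applied to unary functions, $\widetilde{f^{-1}} \circ \tilde{f} = \widetilde{f^{-1} \circ f} = \widetilde{\id_S}$, and symmetrically $\tilde{f} \circ \widetilde{f^{-1}} = \widetilde{\id_T}$. So it remains only to verify that the extension of the identity function $\id_S : S \to S$ to games is literally the identity on $\mathcal{W}_S$. This is an immediate induction on the complexity of $G \in \mathcal{W}_S$: if $G$ is an integer then $\widetilde{\id_S}(G) = \id_S(G) = G$, and otherwise
\[ \widetilde{\id_S}(G) = \{\widetilde{\id_S}(G^L) \mid \widetilde{\id_S}(G^R)\} = \{G^L \mid G^R\} = G\]
by the inductive hypothesis applied to each option.

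Putting these together, the compositions $\widetilde{f^{-1}} \circ \tilde{f}$ and $\tilde{f} \circ \widetilde{f^{-1}}$ are the identity on $\mathcal{W}_S$ and $\mathcal{W}_T$ respectively, hence also on the quotients $\mathbf{W}_S$ and $\mathbf{W}_T$. Thus the induced maps are mutually inverse bijections, proving the corollary. There is no real obstacle here: all the work was done in Theorem~\ref{generalized-compatibility} (to descend to the quotient) and in the composition law (d) of \S\ref{basic-distort}; the only thing to be careful about is the trivial but necessary verification that the extension of $\id_S$ to games is the identity map.
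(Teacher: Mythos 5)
Your proof is correct and follows the same route as the paper's: use fact (d) of \S\ref{basic-distort} to see that $\tilde{f}$ and $\widetilde{f^{-1}}$ are mutually inverse, and Theorem~\ref{generalized-compatibility} to descend both to the quotients. The only addition is your explicit (and harmless) verification that the extension of the identity function is the identity map, which the paper leaves implicit.
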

\begin{proof}
Let $g : T \to S$ be the order-preserving function $f^{-1}$.  By (d) of \S \ref{basic-distort}, $\tilde{f}$ and $\tilde{g}$ are inverses of each other.
By Theorem~\ref{generalized-compatibility}, they induce well-defined order-preserving maps from $\mathbf{W}_S \to \mathbf{W}_T$ and vice-versa.
These two induced maps must be inverses of each other.
\end{proof}
Because of this corollary, the study of $S$-valued games is equivalent to the study of $\{1,\ldots,n\}$-valued games,
for $n = |S|$.

\subsection{The Possible Upsides and Downsides}
We now complete the proof of Theorem~\ref{structure}:
\begin{theorem}\label{pairs-occur}
Let $G, H$ be games in $\mathcal{I}$, with $G \gtrsim H$.  Then there is a $K$ such that $K \approx_+ G$ and $K \approx_- H$, i.e.,
$K^+ \approx G$ and $K^- \approx H$.  Moreover, if $G$ and $H$ are in $\mathcal{I}_S$, then $K$ can be chosen to be in $\mathcal{W}_S$.
Therefore the map
\[ \mathbf{W}_S \to \{(x,y) ~:~ x,y \in \mathbf{I}_S,~x \ge y\}\]
induced by
\[ G \mapsto (G^+,G^-)\]
is surjective.
\end{theorem}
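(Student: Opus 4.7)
The plan is to construct $K$ by reducing to a canonical case and then building a witness recursively. The preceding items in the proof of Theorem~\ref{structure} already give us the additivity $(K_1 + K_2)^\pm \approx K_1^\pm + K_2^\pm$, and $H^\pm \approx H$ whenever $H \in \mathcal{I}$. So if I can construct, for every even $L \in \mathcal{I}$ with $L \gtrsim 0$, a game $T(L)$ satisfying $T(L)^+ \approx L$ and $T(L)^- \approx 0$, then setting $K := T(G - H) + H$ yields $K^+ \approx (G-H) + H \approx G$ and $K^- \approx 0 + H \approx H$, as desired. (Since $G \gtrsim H$ in $\mathcal{I}$, the difference $L := G - H$ is automatically even and in $\mathcal{I}$ with $L \gtrsim 0$.) The valuation refinement ``$K \in \mathcal{W}_S$ when $G, H \in \mathcal{I}_S$'' must be tracked by hand, either by shifting so that the constructed integers end up in $S$ or by arranging the construction not to introduce new integer values in the first place.

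The construction of $T(L)$ proceeds by induction on the recursive structure of $L$. The base case $L = \ell \in \mathbb{Z}_{\ge 0}$ is clean: take $T(\ell) := \{\{0|0\} \,\mid\, \{\ell | \ell\}\}$. Both options lie in $\mathcal{I}$, the game is even, and $\lout(T(\ell)) = 0 \le \ell = \rout(T(\ell))$, so Lemma~\ref{gap-rule} directly gives $T(\ell) \approx_+ \ell$ and $T(\ell) \approx_- 0$. For non-integer $L$, Lemma~\ref{gap-rule} on its own cannot suffice: it would force any $T$ whose options all lie in $\mathcal{I}$ to have $T^\pm$ equivalent to \emph{integers}, namely $\rout(T)$ and $\lout(T)$. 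So the recursion must produce at least one option of $T(L)$ that is outside $\mathcal{I}$. A natural recipe is to let Left's options of $T(L)$ consist of the left options $L^L$ of $L$ augmented by $\{0|0\}$, and let Right's sole option be the wrapper $[L] := \{L | L\}$; Left thereby inherits $L$'s own freedom, while Right's only commitment lands in a game equivalent to $L$ after one round. Finer tuning may be needed to make all outcomes line up, but this is the shape of the construction.

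Verification of $T(L)^+ \approx L$ and $T(L)^- \approx 0$ reduces, by unpacking the definitions and using the parities, to proving $\lfout(T(L) + X) = \lfout(L + X)$ and $\rfout(T(L) + X) = \lfout(X)$ for every game $X$. I would prove these by induction on $X$ (with the inductive hypothesis on $T(L^L)$, $T(L^R)$ feeding in whenever an option of $T(L)$ is played), analyzing the best moves in $T(L) + X$ through strategy-stealing and matching them to corresponding moves in $L + X$; the gap-based inequalities of Lemma~\ref{combo-strat} provide the necessary bounds.

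The main obstacle is precisely the design of the recursive definition of $T(L)$ for non-integer $L$, and showing that the strategy-stealing verification closes. We need to simultaneously arrange that the top-level outcomes satisfy $\lout(T(L)) \le \rout(T(L))$ (so that $T(L)$ genuinely lies outside $\mathcal{I}$ when $L \not\approx \text{integer}$, creating the gap needed for $T^+ \not\approx T^-$); that the options of $T(L)$ carry the refined structural information needed to mimic $L + X$ for arbitrary $X$; and that the whole construction stays within $\mathcal{W}_S$, possibly leaning on Theorem~\ref{i-games-closed} to treat pieces as elements of $\mathbf{I}$ when manipulating them up to equivalence. Threading these three needs through one formula is the crux of the argument.
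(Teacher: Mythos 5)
Your high-level plan is exactly the paper's decomposition: reduce to the difference $\Delta = G - H$, which is even, in $\mathcal{I}$, and $\gtrsim 0$; find a game with upside $\Delta$ and downside $0$; add $H$ back. Your base-case gadget $T(\ell) = \{\{0|0\}\mid\{\ell|\ell\}\}$ is exactly the paper's $Q_\ell$, justified the same way via Lemma~\ref{gap-rule}. The gap is in the step you flag as the crux and leave unfinished, namely the construction for non-integer $L$ — and the correct move is much simpler than the fresh strategy-stealing verification you sketch.

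The paper does \emph{not} define a new game $T(L)$ by a bespoke recursion on $L$. It takes $\Delta$ as-is and merely replaces each integer subgame $n$ with $n$ when $n \le 0$ and with $Q_n$ when $n > 0$, obtaining $\Delta'$. Since $Q_n \approx_+ n$ and $Q_n \approx_- 0 = \min(0,n)$, the game-building compatibility clause (\ref{build-compat}) of Lemma~\ref{obvious-compatibles} immediately gives $\Delta' \approx_+ \Delta$ and $\Delta' \approx_- \tilde{f}(\Delta)$, where $f(n) = \min(0,n)$; one then shows $\tilde{f}(\Delta) \approx 0$ from $0 \ge f$, $\Delta \gtrsim 0$, (b) of \S\ref{basic-distort}, and Theorem~\ref{generalized-compatibility}. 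No case analysis or strategy argument is needed, because the compatibility machinery already propagates $\approx_\pm$ through the game-building operation. Your proposed recipe (Left options $L^L$ plus $\{0|0\}$, unique Right option $\{L|L\}$) does not close as written — the $L^L$ are not wrapped by $T$, so the recursion does not feed back, and you would in any case be re-deriving what (\ref{build-compat}) gives for free. For the $S$-valued refinement, the paper's solution is also slicker than either of your two suggestions: apply the order-preserving rounding $r:\mathbb{Z}\to S$ to $K_0$; $\tilde{r}$ fixes $G$ and $H$ and, by Theorem~\ref{generalized-compatibility}, preserves $\approx_\pm$.
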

\begin{proof}
For $n \ge 0$, let $Q_n = \{\{0|0\}|\{n|n\}\}$.  One can check directly that $Q_n \approx_+ n$ and $Q_n \approx_- 0$ by Lemma~\ref{gap-rule}.

Let $\Delta$ be the game $G - H$.  Then $\Delta \in \mathcal{I}$ and $\Delta \gtrsim 0$.  In particular, $\Delta$ is even.
Let $\Delta'$ be the game obtained from $\Delta$ by replacing every integer subgame $n$ with
\[ \begin{cases}
n & \text{ if } n \le 0 \\
Q_n & \text{ if } n > 0
\end{cases}\]
Let $f : \mathbb{Z} \to \mathbb{Z}$ be the function $f(n) = \min(0,n)$.
Then $\Delta' \approx_+ \Delta$ and $\Delta' \approx_- \tilde{f}(\Delta)$ by an easy induction using the fact
that $Q_n \approx_+ n$ and $Q_n \approx_- 0$.  Let $g : \mathbb{Z} \to \mathbb{Z}$ be the constant function 0.  Since $g \ge f$ and $\Delta \gtrsim 0$,
\[ 0 \approx \tilde{g}(\Delta) \gtrsim \tilde{f}(\Delta) \gtrsim \tilde{f}(0) = 0,\]
by (f) and (b) of \S \ref{basic-distort} and Theorem~\ref{generalized-compatibility}.
Therefore $\tilde{f}(\Delta) \approx 0$.
Since $\Delta' \approx_+ n$ and $\Delta' \approx_- \tilde{f}(\Delta) \approx 0$, we conclude that
\[ \Delta' + H \approx_+ \Delta + H = (G - H) + H \approx G\]
\[ \Delta' + H \approx_- 0 + H \approx H\]
Letting $K_0 = \Delta' + H$, $K_0$ has the desired upside and downside.  It remains to make $K_0$ take values in $S$, if both $G$ and $H$ do.

Suppose $G$ and $H$ take values in $S$. Let $r : \mathbb{Z} \to S$ be the function that sends
each $x \in \mathbb{Z}$ to the closest element of $S$, taking the lesser value in case of a tie.  Clearly $r$ is an order-preserving function
that agrees with the identity on $S$.  Thus $\tilde{r}(G) = G$ and $\tilde{r}(H) = H$, and $\tilde{r}(K_0)$ is $S$-valued.
By Theorem~\ref{generalized-compatibility},
\[ \tilde{r}(K_0) \approx_+ \tilde{r}(G) = G\]
and
\[ \tilde{r}(K_0) \approx_- \tilde{r}(H) = H,\]
Taking $K = \tilde{r}(K_0)$, we get the desired game.
\end{proof}

If $G, H \in \mathcal{I}$ and $G \gtrsim H$, we let $G\& H$ be the game of the theorem, whose upside is $G$ and whose downside is $H$.
This is well-defined up to equivalence.  Our notation is based on the analogy with the loopy partizan theory.

\subsection{Even and Odd}
Let $*$ be the game $\{0|0\}$.  If $G$ is an $S$-valued game, then so is $G + *$.  Indeed, letting $f : S \times \{0\} \to S$ be addition,
$G + * = \tilde{f}(G,*) \in \mathcal{W}_S$.  Now clearly $* \in \mathcal{I}$, so if $G \in \mathcal{I}_S$, then $G + * \in \mathcal{I}_S$.
As $-* = *$, the game $*$ is its own inverse.  Therefore, the map $G \mapsto G + *$ induces involutions on $\mathbf{W}_S$ and $\mathbf{I}_S$
for any $S$.  These involutions exchange the even-tempered and odd-tempered games, establishing a bijection between the even and odd elements in
each of $\mathbf{W}_S$ and $\mathbf{I}_S$.  (Note that we can talk about the parity of an element of $\mathbf{W}_S$ by Theorem~\ref{parity-incomparable}.)

\begin{definition}
Let $G$ be a game.  The \emph{even projection of $G$}, denoted $\even(G)$, is $G$ if $G$ is even-tempered, and $G + *$ otherwise.
\end{definition}

This has a number of simple properties:
\begin{theorem}
\begin{description}
Let $G, H$ be games.
\item[(a)]
If $G$ is $S$-valued, then so is $\even(G)$.
\item[(b)]
If $G \in \mathcal{I}$, then $\even(G) \in \mathcal{I}$.
\item[(c)]
$\even(G)$ is even-tempered and $\even(\even(G)) = \even(G)$.
\item[(d)]
$\even(-G) = -\even(G)$.
\item[(e)]
$G \gtrsim H$ if and only if $\even(G) \gtrsim \even(H)$ and $\pi(G) = \pi(H)$.  In particular, if $G \approx H$ then $\even(G) \approx \even(H)$,
so $\even$ induces well-defined maps on $\mathbf{W}_S$ and $\mathbf{I}_S$ for any $S$.
\item[(f)]
If $\tilde{f}$ is a generalized disjunctive operation, then
\[ \tilde{f}(\even(G_1),\ldots,\even(G_n)) = \even(\tilde{f}(G_1,\ldots,G_n)).\]
In particular, for the case of addition,
\[ \even(G + H) \approx \even(G) + \even(H).\]
\item[(g)]
$\even(G^+) \approx \even(G)^+$ and $\even(G^-) \approx \even(G)^-$.
\end{description}
\end{theorem}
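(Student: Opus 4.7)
The plan is to handle (a)--(d) as direct consequences of the definitions, then use the identity $* + * \approx 0$ to prove (e), and finally propagate the operation $G \mapsto G + *$ through $\tilde f$ and through the upside/downside constructions to obtain (f) and (g).

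For (a), the integer subgames of $G + *$ coincide with the integer subgames of $G$, by the usual subgame analysis together with $G + 0 = G$; the new options arising from moves in the $*$ component contribute no new integers. Part (b) is immediate from $\gap_0(*) = 0$, which places $* \in \mathcal{I}$, together with the closure of $\mathcal{I}$ under addition noted after Theorem~\ref{gap-sums}. Part (c) follows from $\pi(*) = 1$ and the additivity of $\pi$ modulo $2$. Part (d) is the observation that $-* = *$, which is built into Definition~\ref{def:neg}.

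For (e), the forward direction is immediate: $G \gtrsim H$ forces $\pi(G) = \pi(H)$ by Theorem~\ref{parity-incomparable}, and compatibility of $\gtrsim$ with addition (Lemma~\ref{obvious-compatibles}) then gives $\even(G) \gtrsim \even(H)$. The reverse direction needs one extra ingredient: since $* \in \mathcal{I}$ and $-* = *$, Theorem~\ref{i-games-nice-2} yields $* + * \approx 0$. If $G$ and $H$ are both even there is nothing to prove; if both are odd and $G + * \gtrsim H + *$, then adding one more $*$ to each side and collapsing $* + * \approx 0$ gives $G \gtrsim H$.

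Part (f) I would prove by iterating item (g) of \S~\ref{basic-distort}, which moves a single $+*$ from any argument to the output of $\tilde f$. Letting $k$ be the number of odd $G_i$, this yields $\tilde f(\even(G_1),\ldots,\even(G_n)) = \tilde f(G_1,\ldots,G_n) + k\cdot *$; since $\pi(\tilde f(G_1,\ldots,G_n)) \equiv k \pmod 2$, the two sides agree modulo an even number of copies of $*$, which collapses via $* + * \approx 0$. For (g), observe that $G \approx_+ G^+$ implies $G + * \approx_+ G^+ + *$ by Lemma~\ref{obvious-compatibles}, and since $\pi(G) = \pi(G^+)$ the operation $\even$ either adds $*$ to both or to neither, so $\even(G) \approx_+ \even(G^+)$. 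By (b), $\even(G^+) \in \mathcal{I}$, so $\even(G^+)$ realizes the upside of $\even(G)$, giving $\even(G^+) \approx \even(G)^+$; the downside case is entirely symmetric. The only mild obstacle throughout is the reverse direction of (e), which is where the identity $* + * \approx 0$ does the essential work; once that is in hand the remainder is bookkeeping on top of Theorem~\ref{generalized-compatibility} and the results of \S~\ref{basic-distort}.
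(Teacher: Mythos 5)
Your proof is correct and follows essentially the same route as the paper: parts (a)--(d) by direct inspection, part (e) via adding the invertible game $\epsilon \in \{0,*\}$ to both sides, part (f) by iterating item (g) of \S4.1 and collapsing pairs of $*$'s, and part (g) by noting that $\even(G^+)$ is a game in $\mathcal{I}$ left-equivalent to $\even(G)$ and hence realizes its upside. No gaps.
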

\begin{proof}
We already proved (a) and (b).  (c) is trivial.  (d) says that $-G = -G$ when $G$ is even-tempered, and that $*-G = -(*+G)$ when $G$ is odd-tempered,
both of which are trivial.  The remaining three statements are proven as follows:
\begin{description}
\item[(e)]  Suppose $G \gtrsim H$.  Then by Theorem~\ref{parity-incomparable}, $\pi(G) = \pi(H)$.  It remains to see that if $\pi(G) = \pi(H)$,
then $G \gtrsim H \iff \even(G) \gtrsim \even(H)$.  Let $\epsilon$ be $0$ if $\pi(G) = \pi(H) = 0$, and $*$ if $\pi(G) = \pi(H) = 1$.
Then $\even(G) = G + \epsilon$ and $\even(H) = H + \epsilon$.  But clearly
\[ G \gtrsim H \iff G + \epsilon \gtrsim H + \epsilon,\]
since $\epsilon$ is invertible.
\item[(f)]
It follows by repeated applications of (g) of \S \ref{basic-distort} that
\[ \tilde{f}(\even(G_1),\ldots,\even(G_n)) = \tilde{f}(G_1,\ldots,G_n) + * + \cdots + *,\]
where the number of $*$'s is the number of $G_i$ which are odd-tempered.  Thus
\begin{equation} \even(\tilde{f}(\even(G_1),\ldots,\even(G_n))) \approx \even(\tilde{f}(G_1,\ldots,G_n) + * + \cdots + *).\label{step-one}\end{equation}
Now if $X$ is any game, then $\even(X + *) \approx X$
since $\even(X + *)$ is either $X$ or $X + * + * \approx X$.
Thus
\begin{equation} \even(\tilde{f}(G_1,\ldots,G_n) + * + \cdots + *) \approx \even(\tilde{f}(G_1,\ldots,G_n)).\label{step-two}\end{equation}
But since every $\even(G_i)$ is even-tempered, $\tilde{f}(\even(G_1),\ldots,\even(G_n))$ is also even-tempered so
\begin{equation} \tilde{f}(\even(G_1,\ldots,\even(G_n))) = \even(\tilde{f}(\even(G_1),\ldots,\even(G_n))).\label{step-three}\end{equation}
Combining equations (\ref{step-one}-\ref{step-three}) gives the desired result.
\item[(g)] If $G$, $G^+$, and $G^-$ are even-tempered, this says that $G^+ \approx G^+$ and $G^- \approx G^-$.  Otherwise, all three are odd-tempered, and this says
\[ * + G^\pm \approx (G + *)^\pm\]
which follows because $(G + H)^\pm \approx G^\pm + H^\pm$ for any $H$ (part of Theorem~\ref{structure}) and $*^\pm \approx *$ since $* \in \mathcal{I}$.
\end{description}
\end{proof}

We summarize the results of \S \ref{sec:distortions} in the following analog of Theorem~\ref{structure}, whose proof we leave as an exercise to the reader.
\begin{theorem}\label{structure-small}
Let $S \subseteq \mathbb{Z}$.  Let $\mathcal{I}_S^0$ denote the even-tempered elements of $\mathcal{I}_S$.
Then we have a natural isomorphism of posets
\[ \mathbf{W}_S \cong \{(x,y) \in \mathbf{I}_S^0 : x \ge y\} \times \mathbb{Z}/2\mathbb{Z}\]
induced by
\[ G \mapsto (\even(G)^+, \even(G)^-, \pi(G)) \]
If $f : S^k \to S$ is order-preserving, then the extension to games $\tilde{f} : \mathcal{W}_S^k \to \mathcal{W}_S$ induces
maps
\[ \tilde{f} : \mathbf{W}_S^k \to \mathbf{W}_S\]
\[ \tilde{f} : (\mathbf{I}_S^0)^k \to \mathbf{I}_S^0.\]
The restriction to $\mathbf{I}_S^0$ determines the behavior on $\mathbf{W}_S$ as follows:
\[ \even(\tilde{f}(G_1,\ldots,G_k))^+ \approx \tilde{f}(\even(G_1)^+,\ldots,\even(G_k)^+)\]
\[ \even(\tilde{f}(G_1,\ldots,G_k))^- \approx \tilde{f}(\even(G_1)^-,\ldots,\even(G_k)^-)\]
\[ \pi(\tilde{f}(G_1,\ldots,G_k)) \equiv \pi(G_1) + \cdots + \pi(G_k) \quad (\modforreal 2)\]
\end{theorem}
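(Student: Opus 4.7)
The plan is to assemble this theorem from the structural results already established, using Theorem~\ref{structure} for the purely even-tempered case, the preceding theorem on $\even$ to reduce the general case to the even-tempered one, Theorem~\ref{generalized-compatibility} for the descent of $\tilde{f}$ to equivalence classes, and Theorem~\ref{i-games-closed} for preservation of $\mathcal{I}$.

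First I would verify the poset isomorphism. The map $G \mapsto (\even(G)^+, \even(G)^-, \pi(G))$ is well-defined because $\even(G)$ is even-tempered and $S$-valued by part (a) of the preceding theorem, so Theorem~\ref{sidling} applied to $\even(G)$ produces upside and downside in $\mathcal{I}_S^0$ with $\even(G)^+ \gtrsim \even(G)^-$; constancy on $\approx$-classes follows from parts (c) and (e) of the preceding theorem. For injectivity, if two images agree then $\pi(G) = \pi(H)$ and $\even(G)^\pm \approx \even(H)^\pm$, so Theorem~\ref{structure} gives $\even(G) \approx \even(H)$, and part (e) of the preceding theorem then yields $G \approx H$. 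For surjectivity, given $(x,y,\epsilon)$ with $x \ge y$, Theorem~\ref{pairs-occur} produces an even-tempered $S$-valued $K$ with $K^+ \approx x$ and $K^- \approx y$; take $G = K$ if $\epsilon = 0$ and $G = K + *$ if $\epsilon = 1$, noting that $K + *$ is still $S$-valued since $*$ contributes only the integer $0$. The order-preserving property in both directions is the chain $G \gtrsim H \iff \pi(G)=\pi(H)\wedge\even(G)\gtrsim\even(H) \iff \pi(G)=\pi(H)\wedge\even(G)^+\gtrsim\even(H)^+\wedge\even(G)^-\gtrsim\even(H)^-$, combining part (e) of the preceding theorem with Theorem~\ref{structure}.

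Next I would handle the induced maps. The descent $\tilde{f} : \mathbf{W}_S^k \to \mathbf{W}_S$ is immediate from Theorem~\ref{generalized-compatibility}. For $\tilde{f} : (\mathbf{I}_S^0)^k \to \mathbf{I}_S^0$, the codomain is correct because $f : S^k \to S$ ensures $S$-valuedness, Theorem~\ref{i-games-closed} gives membership in $\mathcal{I}$, and the parity formula $\pi(\tilde{f}(G_1,\ldots,G_k)) \equiv \sum_i \pi(G_i) \pmod{2}$ recorded just after Definition~\ref{def:disj} gives even-temperedness when each $\pi(G_i) = 0$. This same parity formula is the third displayed identity in the theorem, for arbitrary $G_i$.

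Finally, for the upside formula, part (f) of the preceding theorem supplies $\even(\tilde{f}(G_1,\ldots,G_k)) \approx \tilde{f}(\even(G_1),\ldots,\even(G_k))$; since $\even(G_i) \approx_+ \even(G_i)^+$ for each $i$, Theorem~\ref{generalized-compatibility} applied to $\approx_+$ yields $\tilde{f}(\even(G_1),\ldots,\even(G_k)) \approx_+ \tilde{f}(\even(G_1)^+,\ldots,\even(G_k)^+)$, whose right side lies in $\mathcal{I}_S^0$ by Theorem~\ref{i-games-closed}. Since the upside is uniquely determined up to $\approx$ among $\mathcal{I}$-games (by the remark following Theorem~\ref{sidling}, which invokes Theorem~\ref{i-games-nice-2}), this yields the first displayed identity; the downside identity follows by the same argument with $\approx_-$ in place of $\approx_+$. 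There is no real obstacle here, since all the heavy lifting has been done; the only mildly delicate point is tracking $S$-valuedness through every construction, but this is already built into the statements of Theorems~\ref{sidling}, \ref{pairs-occur}, and~\ref{i-games-closed}.
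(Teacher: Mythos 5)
The paper gives no proof of this theorem, leaving it explicitly as an exercise to the reader, so the right benchmark is whether your assembly of the earlier results is sound — and it is. Your argument is a correct and complete piecing-together of Theorem~\ref{structure}, the theorem on $\even$ (parts (a), (c), (e), (f)), Theorem~\ref{sidling} and the uniqueness remark after it, Theorem~\ref{pairs-occur}, Theorem~\ref{generalized-compatibility}, Theorem~\ref{i-games-closed}, and the parity additivity formula, which is exactly what the author intended by ``exercise''; every step you take (well-definedness, injectivity, surjectivity, order-preservation in both directions, descent of $\tilde{f}$, and the upside/downside formulas via $\approx_\pm$-compatibility and the fact that $\tilde{f}$ of $\mathcal{I}$-games lands in $\mathcal{I}$) is justified by the lemma you cite.
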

Later when we consider $\{0,1\}$-valued games, this will allow us to restrict our attention to $\mathbf{I}^0_{\{0,1\}}$,
which has only eight elements, rather than considering $\mathbf{W}_{\{0,1\}}$, which has seventy elements.

\section{Reduction to Partizan Games}\label{sec:psi}
Returning to the general case of $\mathbb{Z}$-valued games and addition, the goal of this section is to give
an explicit description of $\mathbf{I}_{\mathbb{Z}}$ (or equivalently $\mathbf{I}_{\mathbb{Z}}^0$) in terms of the
classical combinatorial game theory of disjunctive sums of partizan games.

Earlier we defined $\mathcal{J} \subseteq \mathcal{I}$ to be the set of games $G$ with $\gap_0(G) = 0$ and $\gap_1(G) \le -2$.
By Theorem~\ref{gap-sums}, $\mathcal{I}$ and $\mathcal{J}$ are closed under addition.  Since both consist of invertible games,
$\mathbf{I}$ and $\mathbf{J}$ are both subgroups of the monoid $\mathbf{W}$.  Now by (\ref{heat-gap}) of Lemma~\ref{early-lemma},
heating has the following effects on $\gap_0$ and $\gap_1$:
\[ \gap_0\left(\int^t G\right) = \gap_0(G)\]
\[ \gap_1\left(\int^t G\right) = \gap_1(G) - 2t.\]
Therefore, $\mathcal{I}$ is preserved by heating, $\mathcal{J}$ is preserved by heating by $t \ge 0$, and if $G$ is any game
in $\mathcal{I}$, then $\int^t G \in \mathcal{J}$ for $t \gg 0$.

Because $\mathcal{I}$ is preserved by heating, and heating is compatible with $\approx_\pm$ by Lemma~\ref{obvious-compatibles},
\[ \int^t G \approx_\pm \int^t (G^\pm) \in \mathcal{I}\]
so
\[ \left(\int^t G\right)^\pm \approx \int^t (G^\pm).\]
Thus heating is compatible with upsides and downsides.
Also, heating induces monoid automorphisms on $\mathbf{W}$ and $\mathbf{I}$.
Note however that heating is not compatible with the map $G \mapsto \even(G)$ of the previous section, because $\int^t * = \{t|-t\} \not\approx *$.

\subsection{The Class \textbf{K}}
We would like to define a class of games $\mathcal{K}$ that has the same relation to $\mathcal{W}$ that $\mathcal{J}$ has to $\mathcal{I}$.

Let $\mathcal{K}'$ be the class of games $G$ such that both $G^+$ and $G^-$ can be chosen to lie in $\mathcal{J}$.  Let $\mathcal{K}$
be the class of games $G$ such that every subgame of $G$ (including $G$ itself) lies in $\mathcal{K}'$.  Equivalently, $G \in \mathcal{K}$
if every option of $G$ is in $\mathcal{K}$ and $G \in \mathcal{K}'$.
\begin{theorem}\label{k-non-void}
~
\begin{description}
\item[(a)]
$\mathcal{J} = \mathcal{K} \cap \mathcal{I}$.  In particular, $\mathcal{J} \subseteq \mathcal{K}$.
\item[(b)]
$\mathcal{K}$ is closed under addition and negation.
\item[(c)]
If $G$ is a game, then $\int^t G \in \mathcal{K}$ for $t \gg 0$.  Also, if $G \in \mathcal{K}$ then $\int^t G \in \mathcal{K}$ for $t \ge 0$.
\item[(d)]
If $S \subseteq \mathbb{Z}$, and $\max(S) - \min(S) \le 2$, then $\mathcal{W}_S \subseteq \mathcal{K}$.
\end{description}
\end{theorem}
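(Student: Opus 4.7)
The plan is to prove all four parts by structural induction on games, exploiting the compatibility of the upside/downside decomposition with the operations involved and the closure of $\mathcal{J}$ under addition (Theorem~\ref{gap-sums}), negation (from $\gap_i(-G)=\gap_i(G)$, Lemma~\ref{early-lemma}), and heating by $t\ge 0$ (from $\gap_1(\int^t G)=\gap_1(G)-2t$ together with the invariance of $\gap_0$ under heating).

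For part (a), the inclusion $\mathcal{J}\subseteq\mathcal{K}$ is immediate: if $G\in\mathcal{J}$ then every subgame of $G$ is again in $\mathcal{J}\subseteq\mathcal{I}$ (gaps only shrink on passing to subgames), and for such $G$ we may take $G^+=G^-=G\in\mathcal{J}$. For the reverse $\mathcal{K}\cap\mathcal{I}\subseteq\mathcal{J}$, only the bound $\gap_1(G)\le 2$ needs attention. Let $H$ be an odd subgame of $G$. Since $H\in\mathcal{K}'$, choose $H^+,H^-\in\mathcal{J}$. Evaluating $\approx_\pm$ at $X=0$ gives $\lout(H)=\lout(H^+)$ and $\rout(H)=\rout(H^-)$, and combining with $H^+\gtrsim H^-$ (last line of Theorem~\ref{sidling}) and the bound $\rout(H^-)-\lout(H^-)\le\gap_1(H^-)\le 2$ coming from $H^-\in\mathcal{J}$, we conclude
\[ \rout(H)-\lout(H)=\rout(H^-)-\lout(H^+)\le\rout(H^-)-\lout(H^-)\le 2.\]

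Parts (b), (c), and (d) then follow by parallel structural inductions. In (b) and the second half of (c), once the options are handled inductively it suffices to verify $\mathcal{K}'$-membership of the built-up game, which follows from the identities $(-G)^\pm\approx-(G^\mp)$, $(G+H)^\pm\approx G^\pm+H^\pm$ (Theorem~\ref{structure}), and $(\int^t G)^\pm\approx\int^t G^\pm$ (from the discussion preceding the theorem), combined with closure of $\mathcal{J}$ under the corresponding operation. For the ``$t\gg 0$'' half of (c), induction gives a common threshold $t_0$ placing each heated option in $\mathcal{K}$, and the already-proven second half promotes this to all $t\ge t_0$; the quoted fact that $\int^t H\in\mathcal{J}$ for any $H\in\mathcal{I}$ and $t\gg 0$ then handles $\mathcal{K}'$-membership of $\int^t G$ itself. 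For (d), Theorem~\ref{sidling} allows us to choose $H^\pm\in\mathcal{W}_S$ for every subgame $H$ of $G\in\mathcal{W}_S$; these lie in $\mathcal{I}\cap\mathcal{W}_S$, which is automatically contained in $\mathcal{J}$ when $\diam(S)\le 2$, because any odd subgame $K$ of $H^\pm$ has $\rout(K),\lout(K)\in S$ and hence $\rout(K)-\lout(K)\le\max(S)-\min(S)\le 2$.

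The main conceptual obstacle is the sandwich step in part (a). Membership of $H$ in $\mathcal{K}'$ does not directly bound $\rout(H)-\lout(H)$, since the options (and hence the outcomes of deeper subgames) of $H$ are a priori unrelated to those of $H^\pm$. One must genuinely exploit the inequality $H^-\lesssim H^+$ to transfer the gap bound across equivalence from $H^\pm$ to $H$ itself; everything else reduces to routine induction, since the upside/downside machinery built in the previous sections was precisely set up to make such ``option-plus-class'' inductions go through.
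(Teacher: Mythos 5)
Your proof is correct and follows essentially the same route as the paper's: closure of $\mathcal{J}$ under the relevant operations plus the compatibility of upsides and downsides with those operations, with structural induction propagating membership in $\mathcal{K}'$ to all subgames. The only cosmetic differences are that in (a) you sandwich $\rout(H)-\lout(H)$ between outcomes of $H^-$ and $H^+$ via $H^+\gtrsim H^-$, where the paper simply notes $H\approx H^+$ because $H\in\mathcal{I}$, and in (c) you replace the paper's explicit description of the subgames of $\int^t G$ (as $nt+\int^t G'$ for $G'$ a subgame of $G$) by a structural induction on options combined with closure of $\mathcal{K}$ under adding integers; both versions go through.
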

\begin{proof}
\begin{description}
\item[(a)]
If $G \in \mathcal{J}$, then certainly $G \in \mathcal{I}$.  Also, for every subgame $H$ of $G$, $H \in \mathcal{K}'$ because we can
take $H^+ = H^- = H \in \mathcal{J}$.  So $G \in \mathcal{K}$.

Conversely, suppose that $G \in \mathcal{I} \cap \mathcal{K}$.  If $H$ is any odd-tempered subgame of $G$, then $H \in \mathcal{K}' \cap \mathcal{I}$.
Therefore $H \approx H^+$ for some $H^+ \in \mathcal{J}$, and $\lout(H) - \rout(H) = \lout(H^+) - \rout(H^+) \ge -2$.  Since $H$ was arbitrary,
$\gap_1(G) \ge -2$, and $G \in \mathcal{J}$.
\item[(b)]
Since $\mathcal{J}$ is closed under addition, and addition is compatible with taking upsides and downsides, $\mathcal{K}'$ is also closed under addition.
Now let $G, H \in \mathcal{K}$.  By induction, every option of $G + H$ is in $\mathcal{K}$.  So $G + H \in \mathcal{K}$ as long
as $G + H \in \mathcal{K}'$, which follows from the fact that both $G$ and $H$ are in $\mathcal{K}'$.
The definitions of $\mathcal{I}$, $\mathcal{J}$, and $\mathcal{K}$ are symmetric, not distinguishing the two players, so by symmetry all three classes are closed
under negation.
\item[(c)]
The following fact has an easy inductive proof, which we leave as an exercise to the reader:
\begin{fact}
If $G$ is a game, and $H$ is a subgame of $\int^t G$, then $H = nt + \int^t G'$ for some subgame $G'$ of $G$ and some $n \in \mathbb{Z}$.
\end{fact}
Now if $H$ is any game, then $H^+, H^- \in \mathcal{I}$.  As noted above, if we heat games sufficiently, they will lie in $\mathcal{J}$.  So for any
$H$, we have
\[ \left(\int^t H\right)^\pm = \int^t (H^\pm) \in \mathcal{J}\]
for $t \gg 0$.  Thus $\int^t H \in \mathcal{K}'$ for $t \gg 0$.

Applying this to every subgame of $G$, we see that for $t \gg 0$, every subgame $G'$ of $G$ satisfies $\int^t G' \in \mathcal{K}'$.  Now let $H$ be any subgame
of $\int^t G$.  By the Fact, $H = nt + \int^t G'$ for some $n \in \mathbb{Z}$ and some subgame $G'$ of $G$.  Then $nt \in \mathbb{Z} \subset \mathcal{J} \subset
\mathcal{K}'$ and $\int^t G' \in \mathcal{K}'$.  Then since $\mathcal{K}'$ is closed under addition, $H = nt + \int^t G' \in \mathcal{K}'$.  Since
$H$ was an arbitrary subgame of $\int^t G$, it follows that $\int^t G \in \mathcal{K}$.

A similar argument shows the second claim of (c).
\item[(d)]
First note that if $G$ is any $S$-valued game, then $\lout(G), \rout(G) \in S$, so
\[ |\lout(G) - \rout(G)| \le |\max(S) - \min(S)| \le 2.\]
In particular, $\gap_1(G) \le 2$.  It follows that $\mathcal{I}_S \subseteq \mathcal{J}$.
Then if $G$ is in $\mathcal{W}_S$ and $H$ is any subgame of $G$, $H^+$ and $H^-$ can be chosen to lie in $\mathcal{I}_S \subseteq \mathcal{J}$.
Therefore $G \in \mathcal{K}$.
\end{description}
\end{proof}

\subsection{From Scoring Games to Partizan Games}
We now turn to standard (normal-play, short, non-loopy) partizan games.
\begin{definition}
If $G^L_1,\ldots, G^R_1,\ldots$ are partizan games, let
\begin{equation} \{G^L_1,\ldots|G^R_1,\ldots\}_+ \label{plus-bars}\end{equation}
\begin{equation} \{G^L_1,\ldots|G^R_1,\ldots\}_+ \label{minus-bars}\end{equation}
denote the usual partizan game $\{G^L_1,\ldots|G^R_1,\ldots\}$ unless there is at least one integer $n$ with
\[ G^L_i \lhd n \lhd G^R_j\]
for every $i, j$, in which case we let (\ref{plus-bars}) denote the largest such $n$
and (\ref{minus-bars}) denote the smallest such $n$.
\end{definition}
These operations show up in the definition of atomic weight in the theory of all-small partizan games, suggesting a possible
connection to the present theory.

One convenient property of $\{\cdot|\cdot\}_\pm$ is the following fact:
\begin{lemma}\label{automatic-integer-avoidance}
If $n$ is an integer and $\{G^L\}, \{G^R\}$ are sets of partizan games, then
\[ n + \{G^L|G^R\}_+ = \{n + G^L|n + G^R\}_+\]
and
\[ n + \{G^L|G^R\}_- = \{n + G^L|n + G^R\}_-\]
\end{lemma}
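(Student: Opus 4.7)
The plan is to prove both equations by case analysis on whether the ``integer-avoidance'' clause in the definition of $\{\cdot|\cdot\}_\pm$ is triggered, relying on the translation-invariance of $\lhd$ on partizan games. I will treat only the $+$ case in detail, since the $-$ case is identical.

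First I would establish the key bijection: an integer $m$ satisfies $G^L_i \lhd m \lhd G^R_j$ for all $i,j$ if and only if $m' := n + m$ satisfies $n + G^L_i \lhd m' \lhd n + G^R_j$ for all $i,j$. This is immediate from the fact that adding an integer to both sides preserves $\lhd$ in partizan game theory. Call the set of such $m$ for the original game $S$, and the corresponding set for the shifted game $S'$; then $S' = n + S$.

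Next I would split into two cases. In the case $S = \emptyset$, by the bijection $S' = \emptyset$ as well, so both sides reduce to ordinary partizan game braces: the left side is $n + \{G^L|G^R\}$ and the right side is $\{n+G^L|n+G^R\}$, and these are equal as partizan games by the standard identity that adding an integer distributes over options (a basic fact from \cite{ONAG}, Chapter 0). In the case $S \neq \emptyset$, let $m^\ast = \max S$. Then $\{G^L|G^R\}_+ = m^\ast$ by definition, so the left side equals $n + m^\ast$. On the other side, $\max S' = n + m^\ast$, so $\{n+G^L|n+G^R\}_+ = n + m^\ast$ as well. Both cases give equality.

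The argument for $\{\cdot|\cdot\}_-$ is literally the same proof with $\max$ replaced by $\min$. I do not expect a real obstacle here: the content is just the observation that integer translation is an order automorphism of the partizan games (in particular takes integers to integers) and hence transports the clause in the definition without change. The only minor subtlety worth flagging is to make sure that the identity $n + \{G^L|G^R\} = \{n + G^L|n + G^R\}$ that I use in the first case is an actual equality of partizan game forms, not merely an equivalence; this is built into the recursive definition of $+$ on partizan games, so it is automatic.
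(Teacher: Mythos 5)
Your case split and the bijection $S' = n + S$ are exactly the paper's approach, and the $S \neq \emptyset$ branch is fine. The problem is in the $S = \emptyset$ branch, specifically in the final paragraph where you flag and then dismiss the ``minor subtlety.''

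The identity $n + \{G^L \mid G^R\} = \{n + G^L \mid n + G^R\}$ is \emph{not} an equality of forms, and it is \emph{not} built into the recursive definition of $+$: for a positive integer $n$, the recursion gives $n + G = \{(n-1)+G,\ n+G^L \mid n+G^R\}$, which has the extra left option $(n-1)+G$ (and symmetrically for $n<0$). What you actually need is the value equivalence, which holds by the number/integer avoidance (translation) theorem --- and that theorem has a hypothesis: it requires $\{G^L \mid G^R\}$ not to equal a number (or at least not an integer, for integer translation). This is precisely what $S = \emptyset$ buys you, and this is the one observation your write-up omits. The hypothesis is not decorative: take $\{-3/2 \mid 3/2\} = 0$ and $n = 1$; then $n + \{G^L\mid G^R\} = 1$ while $\{n + G^L \mid n + G^R\} = \{-1/2 \mid 5/2\} = 0$, so the ``identity'' fails outright once an integer sits between the options. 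A secondary confusion: you argue you \emph{want} a form equality, but for this lemma $=$ on partizan games already means value equivalence (as the paper stipulates in the introduction), so equivalence is all that is required. The paper's own proof handles this correctly by citing the integer avoidance theorem together with the remark that $S=\emptyset$ forces $\{G^L\mid G^R\}$ to not equal an integer; you should do the same rather than appeal to the definition of $+$.
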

\begin{proof}
When the caveat that distinguishes $\{G^L|G^R\}_\pm$ from $\{G^L|G^R\}$ doesn't apply, this is just the usual integer avoidance theorem,
since $\{G^L|G^R\}$ does not equal an integer.  In the other case, the largest (smallest) integer which lies between the $n + G^L$ and the
$n + G^R$ is $n$ more than the largest (smallest) integer which lies between the $G^L$ and the $G^R$, so the result is clear.
\end{proof}
\begin{lemma}\label{hopefully-this-is-true}
Let $G_- = \{G^L|G^R\}_-$ and $H_- = \{H^L|H^R\}_-$.  Then
\begin{equation} G_- + H_- = \{G^L + H_-, G_- + H^L| G^R + H_-, G_- + H^R\}_- \label{ugh}\end{equation}
\end{lemma}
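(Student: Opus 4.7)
The strategy is a case analysis on whether the integer caveat in the definitions of $\{G^L|G^R\}_-$ and $\{H^L|H^R\}_-$ is triggered, that is, on whether $G_-$ and $H_-$ reduce to integers or remain the ordinary partizan-game brackets $\{G^L|G^R\}$ and $\{H^L|H^R\}$. Each case will be handled by combining the ordinary recursive formula for partizan addition with the previous lemma (Lemma~\ref{automatic-integer-avoidance}) and a compatibility check on which integers witness the caveat for the larger bracket appearing on the right-hand side of (\ref{ugh}).

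First I would treat the principal case in which neither caveat fires, so that $G_- = \{G^L|G^R\}$ and $H_- = \{H^L|H^R\}$ as classical partizan games. The standard recursive formula for the partizan disjunctive sum then gives
\[ G_- + H_- = \{G^L + H_-,\, G_- + H^L \mid G^R + H_-,\, G_- + H^R\}, \]
and it remains to check that wrapping the right-hand side with $\{\cdot|\cdot\}_-$ in place of $\{\cdot|\cdot\}$ does not alter the resulting game. If the caveat fails on the enlarged bracket, this is immediate. If it succeeds, I would invoke the classical Simplicity Theorem: the ordinary game $\{A^L|A^R\}$ equals the simplest number strictly between its options, and when integers are available in this range the Simplicity choice agrees with the smallest such integer selected by the minus-convention (here one must check that the relevant range is narrow enough that this agreement holds, using that neither $G_-$ nor $H_-$ was itself reduced by the caveat).

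Next I would treat the case where exactly one of $G_-, H_-$ is an integer, say $G_- = m$. Then the left-hand side of (\ref{ugh}) is simply $m + H_-$, and by Lemma~\ref{automatic-integer-avoidance} we have $m + H_- = \{m + H^L \mid m + H^R\}_-$. The task is to show that enlarging this bracket by the additional Left option $G^L + H_-$ and Right option $G^R + H_-$ does not change its $\{\cdot|\cdot\}_-$ value. The decisive fact is that $G^L \lhd m$ and $G^R \rhd m$ strictly, by the very definition of the integer $m$ produced by the caveat. Translating these inequalities through addition by $H_-$ (and using the corresponding comparisons with the $H$-options), any integer $n$ witnessing the caveat for the shorter bracket $\{m + H^L|m + H^R\}_-$ also witnesses it for the longer bracket on the right-hand side of (\ref{ugh}), and conversely, so the minima coincide. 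The case where both $G_-$ and $H_-$ are integers then reduces to straightforward arithmetic after the same compatibility check.

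The main obstacle is the non-integer case when the caveat on the right-hand side bracket unexpectedly fires: one needs the canonical integer selected by the Simplicity Theorem for $\{G^L + H_-, G_- + H^L \mid G^R + H_-, G_- + H^R\}$ to coincide with the smallest integer witness chosen by the minus-convention, which is delicate because these two notions of canonicity (simplest versus smallest) can disagree in general and only agree under a narrowness condition on the range; ruling out wider ranges will require genuinely using the hypothesis that the defining caveats for $G_-$ and $H_-$ themselves did not fire.
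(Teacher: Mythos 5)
Your proposal takes a genuinely different route — a case analysis on whether the $\{\cdot|\cdot\}_-$ caveat fires — whereas the paper uses a translation trick that avoids the case analysis entirely. Unfortunately, the case analysis as you've sketched it contains a real gap, one you yourself flag in the final paragraph: in the ``principal case'' where neither $G_-$ nor $H_-$ is reduced to an integer, you must show that if the caveat fires for the enlarged bracket on the right of~(\ref{ugh}), the \emph{smallest} integer witness (the minus-convention) coincides with the \emph{simplest} number selected by the Simplicity Theorem. These can genuinely disagree (e.g.\ for $\{-5/2\mid -1/2\}$ the simplest integer in the gap is $-1$, the smallest is $-2$), and you offer no argument that this disagreement is ruled out by the hypothesis that the defining caveats for $G_-$ and $H_-$ did not fire. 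A similar subtlety lurks in your second case: you assert that $G^L \lhd m$ ``strictly'' and that this transfers under addition by $H_-$ to give the needed incomparabilities with the witness $n$, but $\lhd$ is not transitive and the resulting chain of incomparabilities does not obviously close up.

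The paper resolves all of this at once by translating. For $n \gg 0$ both $\{K^L \mid K^R\}_- + n = \{K^L + n \mid K^R + n\}_- = \{K^L+n\mid K^R+n\}$ become ordinary partizan brackets (Lemma~\ref{automatic-integer-avoidance} plus the simplicity rule), because once every option is nonnegative the smallest integer in the gap \emph{is} the simplest. One then computes $(G_- + n) + (H_- + n)$ with the ordinary recursive sum formula, observes the result equals the right side of~(\ref{ugh}) shifted by $2n$, and subtracts $2n$. This sidesteps the ``smallest versus simplest'' tension entirely rather than trying to argue it away case by case; if you want to salvage your approach, the first order of business is to supply an actual proof (or counterexample) for the narrowness claim you flagged, and I do not see an easy one.
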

\begin{proof}
If $\{K^L\}$ and $\{K^R\}$ are sets of partizan games, then for $n \gg 0$,
\[ \{K^L|K^R\}_- + n = \{K^L + n|K^R + n\}_- = \{K^L + n|K^R + n\}\]
by Lemma~\ref{automatic-integer-avoidance} and the simplicity rule.
Therefore, for $n \gg 0$,
\begin{align*} (G_- + n) + (H_- + n) = & \{G^L + n|G^R + n\} + \{H^L + n| H^R + n\} \\
 = & \{(G^L+n)+(H_-+n),~(G_-+n)+(H^L+n)\\ &|(G^R+n)+(H_-+n),~(G_-+n)+(H^R+n)\} \\
 = & \{G^L + H_- + 2n,~G_- + H^L + 2n \\ &|G^R+H_- + 2n,~G_- + H^R + 2n\}. \end{align*}
Since $2n \gg 0$, this equals
\begin{align*} &\{G^L + H_- + 2n,~G_- + H^L + 2n|G^R+H_- + 2n,~G_- + H^R + 2n\}_- \\ = &\{G^L + H_-,~G_- + H^L|G^R+H_-,~G_- +H^R\}_- + 2n
\end{align*}
We conclude that for $n \gg 0$,
\[ G_- + H_- + 2n = \{G^L + H_-, G_- + H^L| G^R + H_-, G_- + H^R\}_- + 2n,\]
and so (\ref{ugh}) holds.
\end{proof}

Our tool for reducing $\mathcal{J}$ and $\mathcal{K}$ to the theory of partizan games will be the following maps:
\begin{definition}
Let $\mathbf{Pg}$ denote the class of short partizan games, modulo the usual equivalence.
Let $\psi, \psi^+,\psi^-$ be the three maps $\mathcal{W} \to \mathbf{Pg}$ defined recursively as follows:
\[ \psi(n) = \psi^+(n) = \psi^-(n) = n\]
for $n$ an integer, and otherwise,
\begin{align*} \psi(\{G^L|G^R\}) &= \{\psi(G^L)|\psi(G^R)\} \\
 \psi^+(\{G^L|G^R\}) &= \{\psi^+(G^L)|\psi^+(G^R)\}_+ \\
 \psi^-(\{G^L|G^R\}) &= \{\psi^-(G^L)|\psi^-(G^R)\}_- \end{align*}
Here the $\{\cdots|\cdots\}$ on the left refer to scoring games, while the $\{\cdots|\cdots\}$ on the right refer to
partizan games.
\end{definition}

So for instance, $\psi(\{-1|1\}) = 0$ because $\{-1|1\} = 0$ in the standard partizan theory.  Also note
that the scoring game $1$ has no options, but $\psi(1) = 1 = \{0|\}$ has a left option.

\begin{theorem}\label{plus-minus-homomorphism}
Let $G, H$ be well-tempered scoring games.  Then
\[ \psi^\pm(G + H) = \psi^\pm(G) + \psi^\pm(H)\]
\end{theorem}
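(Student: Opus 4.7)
The plan is to prove this by simultaneous induction on the complexity of the pair $(G,H)$, handling $\psi^-$ and $\psi^+$ in parallel. There are three cases depending on which of $G, H$ are integers, and each case reduces directly to one of the two technical lemmas already proved.

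The two easy cases come first. If both $G$ and $H$ are integers, then $G + H$ is their ordinary integer sum and $\psi^\pm$ is the identity on integers, so the result is immediate. If exactly one summand is an integer --- say $G = n$ --- then since integers have no options, Definition~\ref{def:disj} collapses to $n + H = \{n + H^L \mid n + H^R\}$. Applying $\psi^-$ and invoking the induction hypothesis on each option yields
\[ \psi^-(n+H) = \{n + \psi^-(H^L) \mid n + \psi^-(H^R)\}_-, \]
and Lemma~\ref{automatic-integer-avoidance} rewrites the right-hand side as $n + \psi^-(H) = \psi^-(n) + \psi^-(H)$. The argument for $\psi^+$ is identical.

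The substantive case is when neither $G$ nor $H$ is an integer. Then
\[ G + H = \{G^L + H,\, G + H^L \mid G^R + H,\, G + H^R\}, \]
so applying $\psi^-$ and using the inductive hypothesis on each of the four terms gives
\[ \psi^-(G+H) = \{\psi^-(G^L)+\psi^-(H),\, \psi^-(G)+\psi^-(H^L) \mid \psi^-(G^R)+\psi^-(H),\, \psi^-(G)+\psi^-(H^R)\}_-. \]
Writing $\psi^-(G) = \{\psi^-(G^L)\mid\psi^-(G^R)\}_-$ and $\psi^-(H) = \{\psi^-(H^L)\mid\psi^-(H^R)\}_-$, this is precisely the expression that Lemma~\ref{hopefully-this-is-true} identifies with $\psi^-(G) + \psi^-(H)$. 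Done.

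The $\psi^+$ statement follows by the same template, but it requires the $+$-analog of Lemma~\ref{hopefully-this-is-true}. That analog follows from the lemma itself by the symmetry $\{A \mid B\}_+ = -\{-B \mid -A\}_-$ together with the fact that negation distributes over partizan addition and commutes with the $\psi^\pm$ recipe in the expected way. I expect the main (and only nontrivial) obstacle to be verifying this symmetry cleanly; once it is stated, everything else is the bookkeeping above, and the real technical content has been encapsulated in Lemma~\ref{hopefully-this-is-true}.
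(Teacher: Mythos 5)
Your proof is correct and follows the same structure as the paper's: a three-way case split on whether $G$, $H$ are integers, resolved by the base case, Lemma~\ref{automatic-integer-avoidance}, and Lemma~\ref{hopefully-this-is-true} respectively. You are in fact slightly more careful than the paper --- you work with $\psi^-$ so that Lemma~\ref{hopefully-this-is-true} (which is only stated for the $\{\cdot|\cdot\}_-$ bracket) applies directly, and you explicitly note that the $\psi^+$ case then follows from the symmetry $\{A|B\}_+ = -\{-B|-A\}_-$, a point the paper glosses over when it claims to treat the $\psi^+$ case while citing the $_-$ lemma.
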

\begin{proof}
We given an inductive proof for the $\psi^+$ case.  In the base case where $G, H$ are integers, this is trivial.
In the case where one of $G$ or $H$ is an integer and the other is not, this is Lemma~\ref{automatic-integer-avoidance}.
In the case where $G$ and $H$ are both integers, this is Lemma~\ref{hopefully-this-is-true}.
\end{proof}
Note that the analogous identity fails for $\psi$, since
\[ \psi(\{-2|2\} + 3) = \psi(\{1|5\}) = 2 \ne \{-2|2\} + 3 = \psi(\{-2|2\}) + \psi(3).\]
Even for $\psi^\pm$, these maps are discarding a lot of information, since $\psi^+(\{-2|2\}) = -1 = \psi^+(\{-2|0\})$, although
$\{-2|2\}$ and $\{-2|0\}$ are quite different as scoring games.

Nevertheless, $\psi$ preserves some information about outcomes.
\begin{lemma}\label{outcome-somewhat-preserved}
Let $G$ be a well-tempered scoring game.  If $G$ is even, then $\psi(G) \ge 0$ if and only if $\rout(G) \ge 0$.
If $G$ is odd, then $\psi(G) \rhd 0$ if and only if $\lout(G) \ge 0$.

(By symmetry, if $G$ is even, then $\psi(G) \le 0$ if and only if $\lout(G) \le 0$, and if $G$ is odd,
then $\psi(G) \lhd 0$ if and only if $\rout(G) \le 0$.)
\end{lemma}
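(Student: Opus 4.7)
The plan is to prove both claims simultaneously by structural induction on $G$. The key observation is that the recursive characterizations of $\ge 0$ and $\rhd 0$ in partizan game theory line up perfectly, via the well-tempered parity, with the $\min$--$\max$ definitions of $\rout$ and $\lout$. Recall that for a partizan game $H$, we have $H \ge 0$ iff every right option satisfies $H^R \rhd 0$, and dually $H \rhd 0$ iff some left option satisfies $H^L \ge 0$. Meanwhile, for a non-integer even-tempered scoring game, $\rout(G) = \min_{G^R} \lout(G^R)$ with each $G^R$ odd, and for an odd-tempered $G$, $\lout(G) = \max_{G^L} \rout(G^L)$ with each $G^L$ even. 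Thus the induction hypothesis applied to a right option of an even $G$ gives exactly the information about $\lout(G^R)$ needed, and similarly for left options of an odd $G$.

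For the base case, $G = n$ is an integer and hence even-tempered, with $\psi(G) = n$, so $\psi(G) \ge 0$ iff $n \ge 0$ iff $\rout(G) = n \ge 0$; the odd clause is vacuous. For the inductive step, suppose first that $G = \{G^L|G^R\}$ is even-tempered, so each $G^R$ is odd. Then
\[ \psi(G) \ge 0 \iff \text{every } \psi(G^R) \rhd 0 \iff \text{every } \lout(G^R) \ge 0 \iff \rout(G) \ge 0,\]
using the induction hypothesis (odd case) in the middle. Next, suppose $G$ is odd, so each $G^L$ is even. Then
\[ \psi(G) \rhd 0 \iff \text{some } \psi(G^L) \ge 0 \iff \text{some } \rout(G^L) \ge 0 \iff \lout(G) \ge 0,\]
using the induction hypothesis (even case) in the middle. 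The symmetric claims about $\le 0$, $\lhd 0$, $\lout$, and $\rout$ follow either by the same argument with signs reversed or by applying the result to $-G$ (noting that $\psi(-G) = -\psi(G)$ and $\lout(-G) = -\rout(G)$, etc.).

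There is no genuine obstacle here: the proof is purely mechanical once one writes down the correct recursive characterizations of $\ge 0$ and $\rhd 0$ for partizan games and observes that the well-tempered parity forces the induction to invoke the correct clause at each step. The only mild thing to keep in mind is that $\psi$ may introduce left options for integers (e.g., $\psi(1) = \{0|\}$) that are not present in the scoring game, but since integers form the base case this never interferes with the inductive step.
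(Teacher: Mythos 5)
Your proof is correct and is essentially the same induction as the paper's: the base case on integers, the even case reducing $\psi(G)\ge 0$ to all right options being $\rhd 0$, and the odd case reducing $\psi(G)\rhd 0$ to some left option being $\ge 0$, with the parity alternation feeding the right inductive clause each time. The paper likewise dispatches the parenthetical claims by symmetry, so there is nothing to add.
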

\begin{proof}
We proceed by induction.  If $G$ is an integer $n$, then $\psi(G) = n$, which is $\ge 0$ if and only if it is $\ge 0$, as desired.

If $G$ is not an integer, but is even-tempered, then $\psi(G) \ge 0$ if and only if every $G^R$ satisfies $\psi(G^R) \rhd 0$.  By induction,
this means that $\lout(G^R) \ge 0$ for every $G^R$, which is the same as saying that $\rout(G) \ge 0$.

Finally, suppose that $G$ is odd-tempered.  Then $\psi(G) \rhd 0$ if and only if there exists a $\psi(G)^L = \psi(G^L) \ge 0$.
By induction, this is equivalent to $\rout(G^L) \ge 0$.  So $\psi(G) \rhd 0$ if and only if $\rout(G^L) \ge 0$ for some $G^L$.
But this is equivalent to $\lout(G) \ge 0$.
\end{proof}

The main place where the maps $\psi$ and $\psi^\pm$ are useful is on $\mathcal{J}$ and $\mathcal{K}$:
\begin{theorem}\label{reduction-1}
Let $G \in \mathcal{J}$. Then
\begin{description}
\item[(a)]
 $\psi(G) = \psi^+(G) = \psi^-(G)$
\item[(b)]
If $G$ is even-tempered and $n \in \mathbb{Z}$, then $n \ge \psi(G)$ if and only if $n \ge \lout(G)$
and $n \le \psi(G)$ if and only if $n \le \rout(G)$.
\item[(c)]
If $G$ is odd-tempered and $n \in \mathbb{Z}$, then $n \rhd \psi(G)$ if and only if $n \ge \rout(G)$
and $n \lhd \psi(G)$ if and only if $n \le \lout(G)$.
\end{description}
\end{theorem}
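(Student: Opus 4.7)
The plan is to prove (a), (b), and (c) simultaneously by structural induction on $G$. Because $\mathcal{J}$ is defined by uniform bounds on the gap functions over all subgames, every option of $G$ again lies in $\mathcal{J}$, and the inductive hypothesis applies to each. The base case $G = m \in \mathbb{Z}$ is trivial: $\psi(G) = \psi^+(G) = \psi^-(G) = m$ and $\lout(G) = \rout(G) = m$, so (a) and (b) reduce to tautologies and (c) is vacuous.

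For the inductive step the heart of the argument is analyzing when the ``caveat'' in the definition of $\{\cdots|\cdots\}_\pm$ triggers, i.e., when there is an integer $n$ with $\psi(G^L) \lhd n \lhd \psi(G^R)$ for all options. First suppose $G$ is even, so the options are odd and satisfy the inductive hypothesis (c). That hypothesis gives $\psi(G^L) \lhd n \iff n \ge \rout(G^L)$ and dually $\psi(G^R) \rhd n \iff n \le \lout(G^R)$, so the caveat triggers precisely when $n \in [\lout(G), \rout(G)]$. Since $G \in \mathcal{I}$ forces $\lout(G) \ge \rout(G)$, this interval is nonempty only when $\lout(G) = \rout(G) = m$, in which case $m$ is unique, so $\psi^+(G) = \psi^-(G) = m$; the partizan simplicity rule then forces $\psi(G) = m$ as well. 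When the caveat fails, $\psi(G) = \psi^+(G) = \psi^-(G)$ holds directly from the definitions. Either way (a) holds. For (b), the partizan identity $\psi(G) \le n \iff \psi(G^L) \lhd n$ for every $G^L$ combines with inductive (c) to give $\psi(G) \le n \iff \rout(G^L) \le n$ for every $G^L \iff \lout(G) \le n$; the symmetric computation handles the other half.

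Now suppose $G$ is odd; its options are even and satisfy the inductive hypothesis (b). A parallel unwinding gives $\psi(G^L) \lhd n \iff \rout(G^L) < n$ and $\psi(G^R) \rhd n \iff \lout(G^R) > n$, so the caveat triggers precisely when $n$ lies in $[\lout(G)+1, \rout(G)-1]$. But $G \in \mathcal{J}$ forces $\gap_1(G) \le -2$, hence $\rout(G) - \lout(G) \le -2$, and this interval is empty. The caveat therefore never fires, and $\psi(G) = \psi^+(G) = \psi^-(G)$ holds directly from the definitions. For (c), the partizan identity $\psi(G) \ge n \iff \psi(G^R) \rhd n$ for every $G^R$ combined with inductive (b) gives $\psi(G) \ge n \iff \lout(G^R) > n$ for every $G^R \iff \rout(G) > n$; negating yields $n \rhd \psi(G) \iff n \ge \rout(G)$, and the symmetric statement finishes (c).

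The main subtlety is the parity interplay of the inductive hypotheses: (c) on odd options drives the even case, while (b) on even options drives the odd case, so the three claims really must be proved in one induction. The essential new ingredient beyond Lemma~\ref{outcome-somewhat-preserved} is the role of the tighter class $\mathcal{J}$ in the odd-$G$ caveat analysis: without the strict bound $\gap_1(G) \le -2$, the caveat interval $[\lout(G)+1, \rout(G)-1]$ can be nonempty and the three maps can legitimately disagree, as happens already for the scoring game $\{\{0|0\}|\{0|0\}\}$, which lies in $\mathcal{I} \setminus \mathcal{J}$.
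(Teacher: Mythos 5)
Your overall architecture (a simultaneous induction on all three parts, with the caveat analysis for (a) driven by the gap bounds and by the inductive hypotheses (b)/(c) applied to the options) matches the paper's, but there are two genuine problems. The serious one is in (b) and (c): the ``partizan identity'' $\psi(G) \le n \iff \psi(G^L) \lhd n$ for every $G^L$ is false for $n \neq 0$. Testing $\psi(G) \le n$ means playing $\psi(G) - n$ with Left moving first, and Left may also move in the $-n$ component: for instance $\{-10\,|\,10\} = 0$ has its only left option $-10 \lhd -5$, yet $0 \not\le -5$. Only the forward implication is a general fact; the backward implication needs the extra condition $\psi(G) \lhd n+1$ when $n<0$, and ruling that out here requires the \emph{other} half of (b) for the same game $G$, so you cannot simply quote it. The identity you want is automatic only at $n = 0$ (where the integer has no options), which is exactly why the paper first translates, proving $\psi(G-n) = \psi(G)-n$ via part (a) and Theorem~\ref{plus-minus-homomorphism} (note this translation is precisely what fails for $\psi$ on games outside $\mathcal{J}$, e.g.\ $\psi(\{-2|2\}+3)=2$), and then invokes Lemma~\ref{outcome-somewhat-preserved}. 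The same defect affects your odd-tempered identity $\psi(G) \ge n \iff \psi(G^R) \rhd n$. A repair either reproduces the paper's translation step or tracks Left's and Right's moves in the integer component explicitly throughout the induction; as written, the crux of (b) and (c) is unproved.

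The second problem is the bound you use for $\mathcal{J}$ in the odd case. The operative definition is $\gap_1(G) \le 2$ (as in the definition following Theorem~\ref{gap-sums}, and as forced by $\phi_1(0) = \{-1|1\} \in \mathcal{J}$, which has $\rout - \lout = 2$), not $\gap_1(G) \le -2$. Hence for odd $G \in \mathcal{J}$ the interval $[\lout(G)+1,\rout(G)-1]$ can contain exactly one integer and the caveat \emph{can} fire --- it does for $\{-1|1\}$ and for $\{0|0\}$. Part (a) survives, since a single admissible integer still makes $\psi^+ = \psi^-$ equal to it and the simplicity rule makes $\psi$ equal to it as well (just as in your even case), but your stated reason (``the caveat never fires'') is wrong; what the correct bound rules out is \emph{two} consecutive integers in the interval, which is exactly what the paper's contradiction $\gap_1(G) > 2$ exploits. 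Relatedly, your closing example is off: $\{\{0|0\}|\{0|0\}\}$ has $\gap_0 = \gap_1 = 0$, so it lies in $\mathcal{J}$, and all three maps send it to $0$. A correct witness that the maps diverge outside $\mathcal{J}$ is $\{-2|2\}$, where $\psi = 0$ but $\psi^{\pm} = \pm 1$.
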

\begin{proof}
Let $|G|$ denote the total number of subgames of $G$.  We prove all three statements together, by induction on $|G|$.

In the base case where $|G| = 1$, $G$ is an integer, and all three statements are more or less trivial (the third one is vacuous).

Next suppose that we have proven all three statements for all games $G \in \mathcal{J}$ with $|G| < m$.  We first prove (a)
for all games $G \in \mathcal{J}$ with $|G| = m$.  Proving (a) amounts to showing
that there is at most one integer between the left and right options of
\[ \{\psi(G^L)|\psi(G^R)\}.\]
If not, then there is some $n$ such that
\begin{equation} \psi(G^L) \lhd n < n+1 \lhd \psi(G^R)\label{eq:problematic}\end{equation}
for all $G^L$ and $G^R$.  Because $|G^L| < m$ and $|G^R| < m$, we can apply (b) or (c) to these games by induction.
If $G$ is even-tempered, then by (c),
\[ \rout(G^L) \le n < n + 1 \le \lout(G^R)\]
for all $G^L$ and $G^R$.  Therefore,
\[ \lout(G) \le n < n + 1 \le \rout(G),\]
so $\lout(G) < \rout(G)$ and $\gap_0(G) > 0$, contradicting $G \in \mathcal{I}$.  Similarly, if $G$ is odd-tempered, then by (b) and (\ref{eq:problematic}),
\[ \rout(G^L) < n < n + 1 < \lout(G^R),\]
for all $G^L$ and $G^R$, so that $\gap_1(G) > 2$, contradicting $G \in \mathcal{J}$.  Thus (a) holds for all $G \in \mathcal{J}$
with $|G| = m$.

Next, we show that (b) holds for all even-tempered $G \in \mathcal{J}$ with $|G| = m$.  Note that $|G - n| = |G|$, so from (a),
\[ \psi(G - n) = \psi^+(G - n)\]
\[ \psi(G) = \psi^+(G)\]
Then by Theorem~\ref{plus-minus-homomorphism}, we get
\[ \psi(G - n) = \psi(G) - \psi^+(n) = \psi(G) - n\]
So we are reduced to showing that
\[ 0 \ge \psi(G - n) \iff 0 \ge \lout(G - n)\]
\[ 0 \le \psi(G - n) \iff 0 \le \rout(G - n)\]
So we are reduced to the case $n = 0$.  But this is just Lemma~\ref{outcome-somewhat-preserved}.  The proof of (c) is similar.
\end{proof}
\begin{corollary}\label{reduction-1.5}
Let $G, H \in \mathcal{J}$.  Then
\[ \psi(G + H) = \psi(G) + \psi(H)\]
\[ \psi(-G) = -\psi(G),\]
and when $\pi(G) = \pi(H)$,
\[ G \gtrsim H \iff \psi(G) \ge \psi(H)\]
\end{corollary}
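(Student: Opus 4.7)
The plan is to derive all three claims from machinery already in hand: Theorem~\ref{reduction-1}, which identifies $\psi$ with $\psi^+$ on $\mathcal{J}$ and connects it to outcomes; Theorem~\ref{plus-minus-homomorphism}, which says $\psi^+$ is additive; and Theorem~\ref{i-games-nice-2}, which characterizes $\gtrsim$ on $\mathcal{I}$ via the right outcome of a difference. The key enabling fact is that $\mathcal{J}$ is closed under addition and negation (by Theorem~\ref{gap-sums} and (\ref{gap-neg})), so we never leave $\mathcal{J}$ when combining elements of $\mathcal{J}$.

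First I would handle additivity. Given $G, H \in \mathcal{J}$, observe that $G + H \in \mathcal{J}$, so Theorem~\ref{reduction-1}(a) gives $\psi(G+H) = \psi^+(G+H)$ and similarly $\psi(G) = \psi^+(G)$, $\psi(H) = \psi^+(H)$. Applying Theorem~\ref{plus-minus-homomorphism} then yields $\psi(G+H) = \psi^+(G) + \psi^+(H) = \psi(G) + \psi(H)$.

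Next I would prove $\psi(-G) = -\psi(G)$ by a direct induction on the complexity of $G$, unwinding the definitions of $\psi$ and of negation for both scoring and partizan games: the base case $G = n$ is immediate, and the inductive step reads $\psi(-G) = \{\psi(-G^R)\mid \psi(-G^L)\} = \{-\psi(G^R)\mid -\psi(G^L)\} = -\psi(G)$. This step uses no hypothesis on $G$ beyond well-temperedness.

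Finally, for the order equivalence, suppose $G, H \in \mathcal{J}$ with $\pi(G) = \pi(H)$, so that $G - H$ is even-tempered and lies in $\mathcal{J} \subseteq \mathcal{I}$. By Theorem~\ref{i-games-nice-2}, $G \gtrsim H$ iff $\rout(G - H) \ge 0$. By Theorem~\ref{reduction-1}(b) applied to the even-tempered game $G - H \in \mathcal{J}$, this is equivalent to $\psi(G - H) \ge 0$. Combining the first two parts of the corollary gives $\psi(G - H) = \psi(G) - \psi(H)$, so the condition becomes $\psi(G) \ge \psi(H)$, as desired. The only potential obstacle is bookkeeping the closure properties of $\mathcal{J}$ to ensure Theorem~\ref{reduction-1} applies to all games being considered; once that is in place, the corollary essentially assembles itself from the preceding results.
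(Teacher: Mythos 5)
Your proposal is correct and follows essentially the same route as the paper: additivity via Theorem~\ref{reduction-1}(a) plus Theorem~\ref{plus-minus-homomorphism}, negation by a trivial induction valid for any game, and the order claim by passing to $G-H$ and chaining Theorem~\ref{i-games-nice-2} with the $n=0$ case of Theorem~\ref{reduction-1}(b) (the paper cites Lemma~\ref{outcome-somewhat-preserved}, but observes explicitly that this is the same thing) and the first two parts of the corollary.
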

\begin{proof}
The first claim follows from part (a) of the theorem together with Theorem~\ref{plus-minus-homomorphism}.  The second claim holds for any game $G$.
The third claim follows from the fact that $G - H$ is even, and so
\[ G \gtrsim H \iff \rout(G - H) \ge 0 \iff \psi(G - H) \ge 0 \iff \psi(G) - \psi(H) \ge 0 \iff \psi(G) \ge \psi(H)\]
using Theorem~\ref{i-games-nice-2}, Lemma~\ref{outcome-somewhat-preserved} (which is the $n = 0$ case of part (b) of Theorem~\ref{reduction-1}),
and the first two claims of this corollary.
\end{proof}
In particular, $\psi$ induces a well-defined map
\[ \mathbf{J} \rightarrow \mathbf{Pg},\]
and this map is an order-preserving homomorphism of groups.  Moreover, if we restrict to the even-tempered elements of $\mathbf{J}$, it is injective
and strictly order-preserving.  The kernel of the map $\mathbf{J} \to \mathbf{Pg}$ consists of the two elements $0$ and $\{-1|1\}$.

The analog of Theorem~\ref{reduction-1} for $\mathcal{K}$ is
\begin{theorem}\label{reduction-2}
Let $G \in \mathcal{K}$.  Then $\psi^\pm(G) = \psi(G^\pm)$.  In particular, if $G$ is even then
\[ n \ge \psi^+(G) \iff n \ge \lout(G^+) \iff n \gtrsim G^+ \iff n \gtrsim G\]
\[ n \ge \psi^-(G) \iff n \ge \lout(G^-) = \lout(G)\]
\[ n \le \psi^+(G) \iff n \le \rout(G^+) = \rout(G)\]
\[ n \le \psi^-(G) \iff n \le \rout(G^-) \iff n \lesssim G^- \iff n \lesssim G.\]
Similarly, if $G$ is odd then
\[ n \rhd \psi^-(G) \iff n \ge \rout(G^-) = \rout(G)\]
\[ n \lhd \psi^+(G) \iff n \le \lout(G^+) = \lout(G)\]
\end{theorem}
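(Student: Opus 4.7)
The plan is to establish the main identity $\psi^{\pm}(G) = \psi(G^{\pm})$ first; every characterization in the ``in particular'' list will then follow formally by (i) applying Theorem~\ref{reduction-1}(b,c) to $G^{\pm} \in \mathcal{J}$ to rewrite the partizan inequalities as outcomes of $G^{\pm}$, (ii) invoking Theorem~\ref{i-games-nice-2} (since $n$ and $G^{\pm}$ both lie in $\mathcal{I}$) to convert these outcomes into statements of the form $n \gtrsim G^{\pm}$, (iii) using $G^+ \gtrsim G \gtrsim G^-$ (which follows from Theorem~\ref{sidling} together with the collapse of $\approx_{\pm}$ to $\approx$ on $\mathcal{I}$) to pass to $n \gtrsim G$ where stated, and (iv) noting that $G \approx_+ G^+$ and $G \approx_- G^-$ preserve the appropriate final outcomes, yielding identities like $\rout(G^+) = \rout(G)$ for $G$ even.

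I will prove the main identity by induction on the number of subgames of $G$, treating $\psi^+$; the $\psi^-$ case is symmetric. The base case $G \in \mathbb{Z}$ is immediate. For the inductive step, every option of $G$ is a subgame and hence in $\mathcal{K}$, so by the inductive hypothesis
\[
\psi^+(G) = \{\psi^+(G^L) | \psi^+(G^R)\}_+ = \{\psi((G^L)^+) | \psi((G^R)^+)\}_+.
\]
I introduce the auxiliary scoring game $H = \{(G^L)^+ | (G^R)^+\}$, which satisfies $H \approx_+ G$ by Lemma~\ref{obvious-compatibles}. The constructions of $G^+$ in Theorem~\ref{sidling} and Lemma~\ref{gap-rule} dictate the case split: if $H \in \mathcal{I}$, take $G^+ = H$; if $H$ is even with $\lout(H) < \rout(H)$, take $G^+ = \rout(H)$.

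The main obstacle, and the crux of the induction, is to show in the first case that $H$ actually lies in $\mathcal{J}$, not just in $\mathcal{I}$. All proper subgames of $H$ are in $\mathcal{J}$ by construction, so only the gap contribution of $H$ itself is in doubt. If $H$ is even, the case hypothesis supplies $\gap_0(H) = 0$ while $\gap_1$ is inherited from the options; the delicate subcase is $H$ odd, where I use $G \in \mathcal{K}'$ to produce $K \in \mathcal{J}$ with $K \approx_+ G \approx_+ H$, and then observe that since both $K$ and $H$ lie in $\mathcal{I}$, Theorem~\ref{i-games-nice-2} upgrades $\approx_+$ to $\approx$, so $H$ and $K$ share outcomes and $\rout(H) - \lout(H) = \rout(K) - \lout(K) \le 2$. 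With $H \in \mathcal{J}$ secured, Theorem~\ref{reduction-1}(a) gives $\psi(H) = \psi^+(H)$, and unfolding $\psi^+(H)$ using Theorem~\ref{reduction-1}(a) on the $\mathcal{J}$-options yields $\{\psi((G^L)^+) | \psi((G^R)^+)\}_+ = \psi^+(G)$.

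In the second case $G^+ = \rout(H)$, so $\psi(G^+) = \rout(H)$, and I must check that $\{\psi((G^L)^+) | \psi((G^R)^+)\}_+$ also equals this integer. Applying Theorem~\ref{reduction-1}(c) to the odd $\mathcal{J}$-games $(G^L)^+$ and $(G^R)^+$, the integers $n$ with $\psi((G^L)^+) \lhd n \lhd \psi((G^R)^+)$ for all $L, R$ are exactly those with $\lout(H) \le n \le \rout(H)$; since $\lout(H) < \rout(H)$ this interval contains at least two integers, so the $\{\cdot | \cdot\}_+$ convention selects the largest, which is $\rout(H)$, closing the induction.
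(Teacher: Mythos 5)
Your proof is correct and follows essentially the same route as the paper: induct on $G$, form $H = \{(G^L)^+ \mid (G^R)^+\}$ with $\mathcal{J}$-options, split on whether $H \in \mathcal{I}$, in the first case use $G \in \mathcal{K}'$ together with the fact that $\approx$ preserves outcomes to place $H$ in $\mathcal{J}$ and apply Theorem~\ref{reduction-1}(a) and Corollary~\ref{reduction-1.5}, and in the second case compute $\{\cdot\mid\cdot\}_+$ directly via Theorem~\ref{reduction-1}(c). The only cosmetic differences are your explicit even/odd subcase split (the paper treats both uniformly via $\lout(H) - \rout(H) \ge -2\pi(G)$) and your remark about the interval containing two integers, where one would already suffice to trigger the $\{\cdot\mid\cdot\}_+$ convention.
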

\begin{proof}
Only the claim that $\psi^+(G) = \psi(G^+)$ requires proof, since everything else follows by symmetry, Theorem~\ref{reduction-1}, or Theorem~\ref{structure}.
We proceed by  induction on $G$.  The case where $G \in \mathbb{Z}$ is clear.
Otherwise, let $G = \{G^L|G^R\}$.  Let $H^L$ and $H^R$ be upsides of the options of $G$, chosen to lie in $\mathcal{J}$, and let $H = \{H^L|H^R\}$.
Then $G^L \approx_+ H^L$ and $G^R \approx_+ H^R$ so by Lemma~\ref{obvious-compatibles} $G \approx_+ H$.  We consider the two cases $H \in \mathcal{I}$
and $H \notin \mathcal{I}$ separately.

First suppose that $H \in \mathcal{I}$.  Then $H \approx_+ G$ implies that $H \approx G^+ \in \mathcal{J}$, so $\lout(H) - \rout(H) \ge -2 \cdot \pi(G)$.
Since every option of $H$ is in $\mathcal{J}$, this shows that $H \in \mathcal{J}$.  Then by Theorem~\ref{reduction-1} and Corollary~\ref{reduction-1.5},
\[ \{\psi(H^L)|\psi(H^R)\}_+ = \{\psi^+(H^L)|\psi^+(H^R)\}_+ = \psi^+(H) = \psi(H) = \psi(G^+).\]
But by induction, $\psi(H^L) = \psi^+(G^L)$ and $\psi(H^R) = \psi^+(H^R)$.  So
\[ \psi^+(G) = \{\psi^+(G^L)|\psi^+(G^R)\}_+ = \{\psi^+(H^L)|\psi^+(H^R)\}_+ = \psi(G^+)\]
as desired.

Next, suppose that $H \notin \mathcal{I}$.  Every option of $H$ is in $\mathcal{I}$, so the only way that $H \notin \mathcal{I}$
can occur is if $H$ is even and $\lout(H) < \rout(H)$.  Then by Lemma~\ref{gap-rule},
\[ G \approx_+ H \approx_+ \rout(H).\]
Therefore $G^+ \approx \rout(H)$.  Let $n = \rout(H)$.  By induction and Theorem~\ref{reduction-1}(a),
\[ \psi^+(G) = \{\psi^+(G^L)|\psi^+(G^R)\}_+ = \{\psi(H^L)|\psi(H^R)\}_+.\]
To show that $\psi^+(G)$ equals $\psi(G^+) = n$, we need to show that $n$ is the maximum integer $x$ such that
\begin{equation*}
\psi(H^L) \lhd x \lhd \psi(H^R)
\end{equation*}
for all $H^L$ and $H^R$.  By Theorem~\ref{reduction-1}(c), this is equivalent to
\[ \rout(H^L) \le x \le \lout(H^R)\]
for all $H^L$ and $H^R$, or equivalently, $\lout(H) \le x \le \rout(H)$.  Clearly $n = \rout(H)$ is the greatest such $x$, using the fact
that $\rout(H) > \lout(H)$.
\end{proof}

\subsection{From Partizan Games to Scoring Games}
While $\mathcal{K}$ will be useful \S \ref{sec:boolean}, for the rest of this section we focus on $\mathcal{J}$.  We will find an inverse
to the homomorphism $\mathbf{J}^0 \to \mathbf{Pg}$, where $\mathbf{J}^0$ is the even-tempered subgroup of $\mathbf{J}$.

Let $\phi_0$ and $\phi_1$ be the maps from partizan games to well-tempered scoring games defined as follows:
\begin{align*} \phi_0(G) &= \begin{cases}
n & \text{ if $G$ equals (is equivalent to) an integer $n$} \\
\{\phi_1(G^L)|\phi_1(G^R)\} & \text{ otherwise}
\end{cases} \\
 \phi_1(G) &= \begin{cases}
\{n-1|n+1\} & \text{ if $G$ equals (is equivalent to) an integer $n$} \\
\{\phi_0(G^L)|\phi_0(G^R)\} & \text{ otherwise}
\end{cases}
\end{align*}
For example, $\phi_0(\{1|3*\}) = 2$, $\phi_0(\{1|-3*\}) = \{0|2||-3|-3\}$.  Note that $\phi_i(G)$ depend on the form of $G$, not just its value,
so we do not get well-defined maps $\mathbf{Pg} \to \mathcal{W}$.  Modulo equivalence, however, the $\phi_i$ maps are an exact inverse of $\psi$:
\begin{theorem}\label{reduction-3}
If $G$ is a partizan game and $i = 0$ or $1$, then $\phi_i(G)$ is a well-tempered game, with $\pi(\phi_i(G)) = i$.  Moreover,
$\phi_i(G) \in \mathcal{J}$ and $\psi(\phi_i(G)) = G$.
\end{theorem}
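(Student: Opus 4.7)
My plan is to prove all four assertions---that $\phi_i(G)$ is well-tempered, has parity $i$, lies in $\mathcal{J}$, and satisfies $\psi(\phi_i(G)) = G$---simultaneously by induction on the complexity of $G$ as a short partizan game, handling both $i = 0$ and $i = 1$ together.

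The base case is $G$ equivalent to an integer $n$. Then $\phi_0(n) = n$ is an integer scoring game, trivially even-tempered and in $\mathcal{J}$, with $\psi(n) = n$. For $\phi_1(n) = \{n-1|n+1\}$, a direct check gives an odd-tempered game with $\lout = n-1$ and $\rout = n+1$, so $\gap_0 = 0$ and $\gap_1 = 2$, placing it in $\mathcal{J}$; and $\psi(\phi_1(n)) = \{n-1|n+1\}$ equals $n$ in the partizan theory by Conway's simplicity rule.

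For the inductive step, I assume $G$ is not equivalent to any integer, so $\phi_i(G) = \{\phi_{1-i}(G^L)|\phi_{1-i}(G^R)\}$. By the inductive hypothesis, each $\phi_{1-i}(G^X)$ is a well-tempered scoring game of parity $1-i$ in $\mathcal{J}$ with $\psi(\phi_{1-i}(G^X)) = G^X$. The parity and well-temperedness of $\phi_i(G)$, together with the identity $\psi(\phi_i(G)) = \{G^L|G^R\} = G$, then follow at once. Every proper subgame of $\phi_i(G)$ is a subgame of some $\phi_{1-i}(G^X) \in \mathcal{J}$, so to conclude $\phi_i(G) \in \mathcal{J}$ it suffices to bound $\rout(\phi_i(G)) - \lout(\phi_i(G))$: by $0$ when $i = 0$, and by $2$ when $i = 1$.

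The key step is this gap bound at the root, which I handle via Theorem~\ref{reduction-1}. For $i = 0$, part (c) applied to the odd games $\phi_1(G^X) \in \mathcal{J}$ identifies $\rout(\phi_1(G^L))$ with the smallest integer $n$ satisfying $n \rhd G^L$, and $\lout(\phi_1(G^R))$ with the largest integer $n$ satisfying $n \lhd G^R$; for $i = 1$, part (b) gives the analogous identifications for the even games $\phi_0(G^X)$. Suppose the relevant gap is violated. Taking the max over left options and the min over right options, one can then select an integer $x$ for which $G^L \lhd x \lhd G^R$ holds for every $L$ and every $R$. Conway's simplicity rule in the partizan theory then forces $G$ to equal an integer, contradicting the standing assumption, and so the gap is as claimed. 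The main obstacle is precisely this final translation step: correctly converting a scoring-game gap violation, via Theorem~\ref{reduction-1}, into an integer witness that strictly straddles every left and right option of $G$ in the partizan sense, so that simplicity applies.
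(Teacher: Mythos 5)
Your proof is correct and follows essentially the same route as the paper: an induction on $G$, with the key step being to convert a failure of the gap bound at the root of $\phi_i(G)$, via Theorem~\ref{reduction-1} parts (b)/(c) applied to the options $\phi_{1-i}(G^X)$, into an integer $n$ with $G^L \lhd n \lhd G^R$ for all options, whence the simplicity rule forces $G$ to equal an integer, contradicting the case assumption. The only cosmetic difference is that the paper proves the gap inequalities hold \emph{strictly} (a slightly stronger conclusion), whereas you establish exactly the weak bound needed for membership in $\mathcal{J}$; both arguments are sound.
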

\begin{proof}
The fact that $\phi_i(G)$ is a well-tempered game of the correct parity follows by an easy induction, as does the fact that $\psi(\phi_i(G)) = G$.
We prove the remaining statement, that $\phi_i(G) \in \mathcal{J}$, by induction on $i$.
If $G$ is equal in value to an integer $n$, then $\phi_i(G)$ is either $n$ or $\{n-1|n+1\}$.  Either way, it is in $\mathcal{J}$.

Otherwise, $G$ does not equal an integer.  By induction, every option of $\phi_i(G)$ is in $\mathcal{J}$.  It remains to show that
\begin{equation}\lout(\phi_0(G)) \ge \rout(\phi_0(G))\label{ineq:erti}\end{equation}
and
\begin{equation} \lout(\phi_1(G)) \ge \rout(\phi_1(G)) - 2\label{ineq:ori}\end{equation}
I claim that these inequalities hold strictly.  Suppose not.  If (\ref{ineq:erti}) fails to hold strictly, then there exists some integer $n$
such that
\[ \lout(\phi_0(G)) \le n \le \rout(\phi_0(G))\]
Thus
\[ \rout(\phi_1(G^L)) \le n \le \lout(\phi_1(G^R))\]
for every left option $G^L$ of $G$ and every right option $G^R$ of $G$.  But by induction, every $\phi_1(G^L)$ and $\phi_1(G^R)$
is in $\mathcal{J}$, so Theorem~\ref{reduction-1} applies, giving
\[ G^L = \psi(\phi_1(G^L)) \lhd n \lhd \psi(\phi_1(G^R)) = G^R.\]
By the simplicity rule, $G$ equals an integer - the simplest $n$ between all the $G^L$ and $G^R$.
This contradicts the assumption that $G$ was not an integer, so (\ref{ineq:erti}) holds strictly.  The argument for (\ref{ineq:ori}) is similar.
%
\end{proof}
\begin{corollary}
The partially ordered groups $\mathbf{J}$ and $\mathbf{Pg} \times (\mathbb{Z}/2\mathbb{Z})$ are isomorphic.  The isomorphism
\[ \mathbf{J} \to \mathbf{Pg} \times (\mathbb{Z}/2\mathbb{Z}) \]
is induced by $G \mapsto (\psi(G),\pi(G))$, and its inverse is induced by $(H,i) \mapsto \phi_i(H)$.

Consequently, the $\phi_i$ maps are order-preserving homomorphisms, in the sense that
\[ \phi_i(G) \gtrsim \phi_i(H) \iff G \ge H\]
and
\[ \phi_{i+j}(G +H) \approx \phi_i(G) + \phi_j(H)\]
\end{corollary}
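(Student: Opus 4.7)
The plan is to show that the candidate mutually inverse maps $\Psi\colon \mathbf{J} \to \mathbf{Pg} \times (\mathbb{Z}/2\mathbb{Z})$, $G \mapsto (\psi(G),\pi(G))$, and $\Phi\colon (H,i) \mapsto \phi_i(H)$ descend to well-defined group isomorphisms preserving the partial order. All the real content has been secured in Theorem~\ref{reduction-1}, Corollary~\ref{reduction-1.5}, and Theorem~\ref{reduction-3}; what remains is bookkeeping.

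First I would check that $\Psi$ is well-defined on $\mathbf{J}$. Parity descends to $\mathbf{J}$ by Theorem~\ref{parity-incomparable}, and if $G \approx H$ with $G,H \in \mathcal{J}$, then $\pi(G)=\pi(H)$, so Corollary~\ref{reduction-1.5} yields $\psi(G)\ge\psi(H)$ and $\psi(H)\ge\psi(G)$, hence $\psi(G)=\psi(H)$ in $\mathbf{Pg}$. The same corollary supplies additivity of $\psi$ on $\mathcal{J}$, and additivity of $\pi$ modulo $2$ is standard, so $\Psi$ is a group homomorphism. For the partial order, $G \gtrsim H$ in $\mathbf{J}$ forces $\pi(G)=\pi(H)$, and combined with Corollary~\ref{reduction-1.5} this is exactly $\Psi(G) \ge \Psi(H)$ in the product order (where $\mathbb{Z}/2\mathbb{Z}$ carries only the diagonal relation); the converse reads the same corollary the other way.

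Next I would check that $\Phi$ is well-defined on $\mathbf{Pg} \times (\mathbb{Z}/2\mathbb{Z})$. Suppose $H_1,H_2$ are partizan game forms with $H_1 = H_2$ in $\mathbf{Pg}$. By Theorem~\ref{reduction-3}, each $\phi_i(H_j)$ lies in $\mathcal{J}$ with parity $i$ and satisfies $\psi(\phi_i(H_j)) = H_j$, so $\psi(\phi_i(H_1)) = \psi(\phi_i(H_2))$ in $\mathbf{Pg}$. Corollary~\ref{reduction-1.5} then gives $\phi_i(H_1) \approx \phi_i(H_2)$. That $\Psi\circ\Phi$ is the identity is immediate from Theorem~\ref{reduction-3}. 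The reverse composition says that for $G \in \mathcal{J}$, $\phi_{\pi(G)}(\psi(G)) \approx G$; both sides lie in $\mathcal{J}$ with parity $\pi(G)$ and the same $\psi$-image, so they are equivalent by the injectivity just established.

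Finally, the two consequences for $\phi_i$ fall out at once. Since $\phi_i(G)$ and $\phi_i(H)$ lie in $\mathcal{J}$ with common parity $i$, Corollary~\ref{reduction-1.5} gives $\phi_i(G) \gtrsim \phi_i(H) \iff \psi(\phi_i(G)) \ge \psi(\phi_i(H)) \iff G \ge H$. Similarly, $\phi_i(G) + \phi_j(H)$ and $\phi_{i+j}(G+H)$ both lie in $\mathcal{J}$ with parity $i+j$, and additivity of $\psi$ on $\mathcal{J}$ gives them the same $\psi$-image $G+H$, whence $\phi_i(G)+\phi_j(H) \approx \phi_{i+j}(G+H)$ by the same injectivity principle. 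No step here presents a genuine obstacle; the only care needed is to keep the $\mathbb{Z}/2\mathbb{Z}$ parity component properly bundled with the $\psi$-component when invoking Corollary~\ref{reduction-1.5}, which requires equal parities before translating $\gtrsim$ into $\ge$.
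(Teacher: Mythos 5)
Your proposal is correct and follows exactly the route the paper intends: the paper's entire proof is the remark that the corollary ``follows easily from Theorem~\ref{reduction-3} and Corollary~\ref{reduction-1.5},'' and your argument is a careful unpacking of precisely those two results (using Theorem~\ref{parity-incomparable} for the parity bookkeeping and closure of $\mathcal{J}$ under addition for the final additivity claim). No discrepancies.
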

This follows easily from Theorem~\ref{reduction-3} and Corollary~\ref{reduction-1.5}.%

\subsection{Heating things up}

To complete the description of $\mathbf{I}^0$, we need to find the effect of heating on $\mathbf{Pg}$ under
the identification of $\mathbf{J}^0 \cong \mathbf{Pg}$.

If $K \ge 0$ is a partizan game, we can recursively define a function $f_K$ on partizan games by
\[ f_K(G) = \{f_K(G^L) + K | f_K(G^R) - K\}.\]
It is easy to see that this map is a homomorphism and is exactly order preserving: $f_K(G) \ge 0$ if and only if $G \ge 0$.
Thus $G = H$ implies that $f_K(G) = f_K(H)$.
In fact, this map is Norton multiplication by $\{K|\}$, or equivalently, the overheating operation $\int_{\{K|\}}^{K}$.
Because it is order-preserving, it is well-defined on the quotient space $\mathbf{Pg}$ of partizan games modulo equivalence.

Taking $K = 1*$, we get Norton multiplication by $\{1*|\}$, or equivalently, the overheating operation $G \mapsto \int_1^{1*} G$.
Note that $\int_1^{1*} n = n$ for $n \in \mathbb{Z}$ because $\int_1^{1*}1 = 1$.

As we noted above, $\mathcal{J}$ is closed under the operation $\int^t$ for $t > 0$, by (\ref{heat-gap}) of Lemma~\ref{early-lemma}.
\begin{theorem}\label{heating-effect}
Let $G$ be a well-tempered scoring game in $\mathcal{J}$. If $G$ is even-tempered, then
\[ \psi\left(\int^1 G\right) = \int_1^{1*} \psi(G)\]
and if $G$ is odd-tempered, then
\[ \psi\left(\int^1 G\right) = * + \int_1^{1*} \psi(G).\]
\end{theorem}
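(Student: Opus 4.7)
The plan is a joint induction on the structural complexity of $G$, establishing both parity cases in lockstep. Since $\mathcal{J}$ is closed under subgames, every $G^L$ and $G^R$ is again in $\mathcal{J}$ and satisfies the inductive hypothesis. The base case $G = n \in \mathbb{Z}$ is immediate: $\int^1 n = n$, $\psi(n) = n$, and $\int_1^{1*} n = n$ (inductively from the observation $\int_1^{1*} 1 = 1$), which matches the even-tempered formula since integers are even-tempered.

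For the inductive step, write $G = \{G^L \mid G^R\}$ non-integer. Unfolding the definition of heating gives $\int^1 G = \{1 + \int^1 G^L \mid -1 + \int^1 G^R\}$. Applying $\psi$ and pulling the integer shifts through via Corollary~\ref{reduction-1.5} (valid because $1, -1, \int^1 G^L, \int^1 G^R \in \mathcal{J}$) yields
\[ \psi\bigl(\textstyle\int^1 G\bigr) = \bigl\{\, 1 + \psi(\textstyle\int^1 G^L) \;\big|\; -1 + \psi(\textstyle\int^1 G^R) \,\bigr\}. \]
We then substitute the inductive hypothesis for $\psi(\int^1 G^{L/R})$; the form of the substitution depends on the parity of the options, which is the opposite of the parity of $G$.

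When $G$ is even (options odd), substitution produces $\{1 + * + \int_1^{1*}\psi(G^L) \mid -1 + * + \int_1^{1*}\psi(G^R)\}$, which, using $1 + * = 1*$ and $-1 + * = -1*$ in $\mathbf{Pg}$, coincides term-for-term with the recursive expansion of $\int_1^{1*}\psi(G)$ on the form $\psi(G) = \{\psi(G^L) \mid \psi(G^R)\}$. When $G$ is odd (options even), the substitution gives $\psi(\int^1 G) = \{1 + \int_1^{1*}\psi(G^L) \mid -1 + \int_1^{1*}\psi(G^R)\}$, whereas unfolding the target $* + \int_1^{1*}\psi(G)$ with the partizan sum rule and simplifying $* + 1* = 1$, $* + (-1*) = -1$ produces those same options together with an extra ``intruder'' option $\int_1^{1*}\psi(G)$ on each side of the bar.

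I expect eliminating the intruders to be the main obstacle. The natural route is partizan reversibility: the intruder $\int_1^{1*}\psi(G)$ on the left has right option $\int_1^{1*}\psi(G^R) - 1*$, which one must show satisfies $\int_1^{1*}\psi(G^R) - 1* \le * + \int_1^{1*}\psi(G)$. After cancelling a $1* + *$ on each side and using that $\int_1^{1*}$ is a strictly order-preserving group homomorphism, this reduces to the partizan-game inequality $\psi(G^R) \le \psi(G) + 1$, and symmetrically $\psi(G^L) \ge \psi(G) - 1$ for the right intruder. These are where we genuinely use the hypothesis $G \in \mathcal{J}$: since $\gap_1(G) \le 2$ and $\lout(G^R) \ge \rout(G)$, Theorem~\ref{reduction-1}(b)--(c) pins down the integer profile of $\psi(G)$ and of the even-tempered $\psi(G^R)$ tightly enough that their partizan difference is forced into $[-1,1]$ up to infinitesimals; once these bounds are in hand, the reversibility goes through, the intruders vanish, and the two forms agree in $\mathbf{Pg}$, closing the induction.
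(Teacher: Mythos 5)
Your route is genuinely different from the paper's. The paper inverts the problem: since $\psi$ and $\phi_i$ are mutually inverse, it suffices to prove $\phi_0\left(\int_1^{1*}H\right)\approx\int^1\phi_0(H)$ and $\phi_1\left(*+\int_1^{1*}H\right)\approx\int^1\phi_1(H)$ by induction on the \emph{partizan} game $H$; there the extra ``$*+$'' is absorbed by the already-established homomorphism property of $\phi$ and the scoring-game sum, so no intruder options ever arise. Your even case is fine, and your diagnosis of the odd case is exactly right, but the resolution has a genuine gap at the point you flag as the crux. The inequalities $\psi(G^L)\ge\psi(G)-1$ and $\psi(G^R)\le\psi(G)+1$ are false for arbitrary options (take $G=\{5,0\mid 0\}\in\mathcal{J}$: the dominated option gives $0\not\ge\{5\mid 0\}-1$), and even for the optimal options they do not follow from the ``integer profile'' of Theorem~\ref{reduction-1}: knowing how a partizan game compares with every integer pins down its value only very loosely (compare $\pm 100$ with $*$), so ``forced into $[-1,1]$ up to infinitesimals'' is not a valid inference. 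The inequalities are nevertheless true for the right choice of option, but the proof must route back through the scoring-game side: taking $G^{L_0}$ with $\rout(G^{L_0})=\lout(G)$, inequality (\ref{eor}) of Lemma~\ref{combo-strat} gives $\rout(G^{L_0}-G)\ge\rout(G^{L_0})-\lout(G)-\gap_0(G^{L_0})=0$, and then Corollary~\ref{reduction-1.5} together with Theorem~\ref{reduction-1}(c) applied to the odd-tempered $\mathcal{J}$-game $G^{L_0}+1-G$ yields $\psi(G^{L_0})+1\ge\psi(G)$; dually for the best $G^{R_0}$.

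A second, smaller repair: reversibility does not delete the intruder, it replaces it by the left options of the reversing position, which you would then still have to dominate away. The clean tool is the gift horse principle. Writing $U$ for the game $\bigl\{1+\int_1^{1*}\psi(G^L)\bigm| -1+\int_1^{1*}\psi(G^R)\bigr\}$ that your computation identifies with $\psi\left(\int^1 G\right)$, and $C=\int_1^{1*}\psi(G)$, the two inequalities above (pushed through the exactly order-preserving homomorphism $\int_1^{1*}$) show that each player, moving first, wins $U-C$; hence $C\lhd U$ and $C\rhd U$, and adjoining $C$ as a gift horse on each side of $U$ gives $U=*+C$ as required. With these two repairs your induction closes, but as written the key step is not justified.
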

\begin{proof}
Since $\psi$ and $\phi$ are inverses of each other, it suffices to show that if $H$ is a partizan game, then
\begin{equation} \phi_0\left(\int_1^{1*} H\right) \approx \int^1 \phi_0(H) \label{phiside0} \end{equation}
\begin{equation} \phi_1\left(* + \int_1^{1*} H\right) \approx \int^1 \phi_1(H) \label{phiside1} \end{equation}
These two statements are equivalent, for fixed $H$, because
\[ \phi_1\left(* + \int_1^{1*} H\right) = \phi_1(*) + \phi_0\left(\int_1^{1*} H\right) = \{0|0\} + \phi_0\left(\int_1^{1*} H\right)\]
\[ \int^1 \phi_1(H) = \int^1 (\phi_1(0) + \phi_0(H)) = \int^1 (\{-1|1\} + \phi_0(H)) = \{0|0\} + \int^1 \phi_0(H).\]

We proceed to prove (\ref{phiside0}-\ref{phiside1}) by induction on $H$.
If $H$ is equal to an integer $n$, then $\int_1^{1*} H = \int_1^{1*} n = n$.  Therefore $\int_1^{1*} H$ also
equals an integer, and
\[ \phi_0\left(\int_1^{1*} H\right) = \phi_0(n) = n.\]
But $n = \int^1 n = \int^1 \phi_0(n) \approx \int^1 \phi_0(H)$, since $n = H$ implies that $\phi_0(n) \approx \phi_0(H)$.  This establishes (\ref{phiside0})
for $H$ equal to an integer.

When $H$ does not equal an integer, neither does
$\int_1^{1*} H$.  Then
\begin{align*} \phi_0\left(\int_1^{1*} H\right) &= \phi_0\left(\left\{\int_1^{1*} H^L + 1* | \int_1^{1*} H^R + 1*\right\}\right) \\
&= \left\{ \phi_1\left(\int_1^{1*} H^L + 1*\right) | \phi_1\left(\int_1^{1*} H^R + 1*\right)\right\} \\
&\approx \left\{ \phi_1\left(* + \int_1^{1*} H^L\right) + \phi_0(1) | \phi_1\left(* + \int_1^{1*} H^R\right) + \phi_0(1)\right\} \\
&\approx \left\{ \int^1 \phi_1(H^L) + 1 | \int^1 \phi_1(H^R) - 1 \right\} \\
&= \int^1 \{\phi_1(H^L)|\phi_1(H^R)\} \\
&= \int^1 \phi_0(H)
\end{align*}
proving (\ref{phiside0}).  Here, the first equality is the definition of $\int_1^{1*}$, the second follows
because $\int_1^{1*} H$ does not equal an integer, the third follows because $\phi$ is a homomorphism, the fourth follows
by induction and the fact that $\phi_0(1) = 1$, the fifth follows by definition of heating, and the sixth follows because $H$ does not equal
an integer.
%
%
\end{proof}

Consider the following families of well-tempered scoring games:
\begin{align*} \mathcal{F}_n &= \{G \in \mathcal{W} : \gap_0(G) = 0,~ \gap_1(G) \le 2n \}\\
               \mathcal{F}_n^0 &= \{G \in \mathcal{F}_n : \pi(G) = 0\} \end{align*}
Then we have
\[ \mathcal{J} = \mathcal{F}_{1} \subset \mathcal{F}_{2} \subset \mathcal{F}_3 \subset \cdots \subset \mathcal{I}\]
and in fact $\mathcal{I} = \bigcup_{i = 1}^n \mathcal{F}_i$.  By (\ref{heat-gap}) of Lemma~\ref{early-lemma}, $\int^j$ is an isomorphism
from $\mathcal{F}_{n+j} \to \mathcal{F}_n$ for every $n, j$.  Also,
by Theorem~\ref{gap-sums}, each $\mathcal{F}_n$ is closed under addition.  Since they are also clearly closed under negation,
it follows that the $\{\mathbf{F}_n\}$ form a filtration of subgroups of $\mathbf{I}$.

Now $\mathbf{I}^0$ is the direct limit
of the following sequence of partially-ordered abelian groups:
\[
\mathbf{F}^0_1 \hookrightarrow \mathbf{F}^0_2 \hookrightarrow \mathbf{F}^0_3 \hookrightarrow \cdots 
\]
where the maps are inclusions.  But this is the top row of the following commutative diagram, whose vertical maps are isomorphisms:
\[
\begin{CD}
\mathbf{F}^0_1 @>>> \mathbf{F}^0_2 @>>> \mathbf{F}^0_3 @>>> \cdots \\
@V{id}VV         @V{\int^1}VV           @V{\int^2}VV   \\
\mathbf{J}^0 @>{\int^1}>> \mathbf{J}^0 @>{\int^1}>> \mathbf{J}^0 @>{\int^1}>> \cdots \\
@V{\psi}VV         @V{\psi}VV            @V{\psi}VV \\
\mathbf{Pg} @>{\int_1^{1*}}>> \mathbf{Pg} @>{\int_1^{1*}}>> \mathbf{Pg} @>{\int_1^{1*}}>>
\end{CD}
\]
Consequently, as an abstract partially-ordered abelian group, $\mathbf{I}^0$ is isomorphic to the direct limit
\[ \mathbf{Pg} \stackrel{\int_1^{1*}}{\rightarrow} \mathbf{Pg} \stackrel{\int_1^{1*}}{\rightarrow} \mathbf{Pg} \stackrel{\int_1^{1*}}{\rightarrow} \cdots \]
We can do something similar with $\mathbf{I}$, expressing it as a direct limit of copies of $\mathbf{Pg} \times (\mathbb{Z}/2\mathbb{Z})$.

Combined with Theorems~\ref{structure} and \ref{structure-small}, this gives us a complete description of $\mathcal{W}$ in terms of
This in turn gives us a complete description of $\mathbf{W}$ in terms of $\mathbf{Pg}$.  To recover right and left outcomes from this description,
one can utilize the following facts (all of which have been previously proven):
\begin{itemize}
\item If $G$ is even-tempered, then $\lout\left(\int^t G\right) = \lout(G)$.
\item If $G$ is odd-tempered, then $\lout\left(\int^t G \right) = \lout(G) + t$.
\item If $G \in \mathcal{J}$ is even-tempered, then
\[ \lout(G) = \min\{n \in \mathbb{Z} : n \ge \psi(G)\}\]
\item If $G \in \mathcal{J}$ is odd-tempered, then
\[ \lout(G) = \max\{n \in \mathbb{Z} : n \lhd \psi(G)\}\]
\item If $G$ is even-tempered, then $\lout(G) = \rfout(G) = \rfout(G^-) = \lout(G^-)$.
\item If $G$ is odd-tempered, then $\lout(G) = \lfout(G) = \lfout(G^+) = \lout(G^+)$.
\item $\rout(G) = -\lout(-G)$.
\end{itemize}

\section{Canonical Forms for Invertible Games}\label{sec:canonical}
In the standard partizan theory, every equivalence class of games has a canonical representative, characterized by its lack of reversible and dominated moves.
In this section, we prove the same thing for well-tempered scoring games that are \emph{invertible}.  This section is included mainly for completeness,
and follows easily from the previous section, so the reader may want to skip it.

We first observe that non-invertible well-tempered scoring games need not have any sort of canonical form.
For instance,
\[ \{x|0||1|y\} \approx 1\&0\]
for any numbers $x, y \in \mathbb{Z}$, and there is no sense in which any of these forms is minimal or canonical.  Also, no simpler game
is a form of $1\&0$.  Probably the most canonical way of describing a general scoring game $G$ is as
\[ G^+ \& G^-,\]
using the canonical forms of $G^+$ and $G^-$ which we will describe below.

\begin{definition}
Let $G$ be a well-tempered scoring game.  Then we say that a left option $G^L$ is \emph{reversible} if there is some right option $G^{LR}$
of $G^L$ with $G^{LR} \lesssim G$.  We say that a right option $G^R$ is \emph{reversible} if there is some left option $G^{RL}$ of $G^R$
with $G^{RL} \gtrsim G$.  We say that a left option $G^L$ is \emph{dominated} if some other left option $G^{L'}$ satisfies
$G^{L'} \gtrsim G^L$, and we say that a right option $G^R$ is \emph{dominated} if some other right option $G^{R'}$ satisfies
$G^{R'} \lesssim G^R$.  If $G \in \mathcal{I}$, we say that $G$ is \emph{canonical} if it has no reversible or dominated options, and the
same is true for all subgames.
\end{definition}
The rest of this section is a proof that every invertible $\mathbb{Z}$-valued game has a unique form which is canonical.
We reduce this to the same fact for partizan games.  There are probably simpler proofs.

\begin{lemma}\label{canonical-1}
If $G'$ is an option of $G$, then the corresponding option $\int^t G' \pm t$ of $\int^t G$ is dominated (resp. reversible)
if and only if $G'$ is dominated (resp. reversible).  Consequently, $G$ has dominated (resp. reversible) options if and only if $\int^t G$ does.
In particular, if $G \in \mathcal{I}$, then $\int^t G$ is canonical if and only if $G$ is.
\end{lemma}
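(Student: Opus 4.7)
The plan is to reduce all three assertions to the compatibility of heating with $\gtrsim$ established in Lemma~\ref{obvious-compatibles} (together with the fact that $\int^{-t}$ inverts $\int^t$ by (\ref{double-heat})) and the explicit description of the options of $\int^t G$ coming from the definition of heating. The only real content is careful bookkeeping of the $\pm t$ shifts, and no new ingredients are required.

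For the dominance claim, I would first unpack the definition of $\int^t G$ to observe that its left options are exactly the games $\int^t G^L + t$ with $G^L$ ranging over left options of $G$, and dually on the right. So the left option $\int^t G^L + t$ is dominated iff some other left option $\int^t G^{L'} + t$ satisfies $\int^t G^{L'} + t \gtrsim \int^t G^L + t$. Since adding a fixed integer constant is trivially compatible with $\gtrsim$, this is equivalent to $\int^t G^{L'} \gtrsim \int^t G^L$, which by heating compatibility (applied with $-t$) is equivalent to $G^{L'} \gtrsim G^L$, i.e., to $G^L$ being dominated. The right-option case is symmetric.

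For the reversibility claim the key computation is that the right options of the left option $\int^t G^L + t$ are exactly the games $\int^t G^{LR}$: the recursive definition of heating gives right options $-t + \int^t G^{LR}$ of $\int^t G^L$, and adding the outer $+t$ cancels the $-t$. So $\int^t G^L + t$ is reversible iff some $\int^t G^{LR} \lesssim \int^t G$, which by heating compatibility is equivalent to $G^{LR} \lesssim G$, i.e., to $G^L$ being reversible. The right-option case is again symmetric. The ``consequently'' sentence is then just the per-option equivalence summed over all options.

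For the ``in particular'' clause, I would invoke the Fact (stated earlier) that every subgame of $\int^t G$ has the form $nt + \int^t G'$ for some $n \in \mathbb{Z}$ and some subgame $G'$ of $G$. Adding the integer constant $nt$ leaves the options unchanged up to an additive shift and hence preserves both dominance and reversibility, so $nt + \int^t G'$ has no dominated or reversible options iff $\int^t G'$ does iff (by the first part) $G'$ does. Since this correspondence $G' \leftrightarrow nt + \int^t G'$ is a bijection between subgames of $G$ and subgames of $\int^t G$ at the level of the condition ``has no reversible or dominated options,'' canonicity of $\int^t G$ is equivalent to canonicity of $G$. The main (minor) obstacle throughout is ensuring the $\pm t$ shifts cancel in the right way when passing between an option and its own options; once the recursive formula for $\int^t$ is written out cleanly this is automatic.
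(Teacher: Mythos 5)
The paper leaves this lemma as an exercise, so there is no authorial proof to compare against, but your proof is correct and is surely the intended one. The crucial bookkeeping step—that the right options of the left option $t + \int^t G^L$ are $t + (-t + \int^t G^{LR}) = \int^t G^{LR}$, so that heating pushes a candidate reversing move through cleanly—is exactly the point one must check, and you handle it correctly; the only tiny omission is the degenerate case where $G^L$ is already an integer (then neither $G^L$ nor $t + \int^t G^L$ has any right options, so reversibility holds vacuously on both sides). Likewise, for the dominance direction you correctly use that $\int^{-t}$ inverts $\int^t$ by (\ref{double-heat}) together with (\ref{heat-compat}) from Lemma~\ref{obvious-compatibles}, and your use of the Fact inside the proof of Theorem~\ref{k-non-void} to pass from options to all subgames of $\int^t G$ is the right way to get the final "canonical" statement; the correspondence $G' \leftrightarrow nt + \int^t G'$ need not be a literal bijection (the integer $n$ depends on the path of descent), but you only need that every subgame on each side corresponds to \emph{some} subgame on the other with the same local dominance/reversibility structure, which is what your argument delivers.
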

We leave the easy proof as an exercise to the reader.

\begin{lemma}\label{canonical-2}
Let $G$ be a partizan game in canonical form.  Then $\phi_0(G)$ and $\phi_1(G)$ are canonical.
\end{lemma}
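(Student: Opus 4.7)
The plan is to induct on $G$, separating the case where $G$ equals an integer (in value) from the case where it does not. In the base case, $\phi_0(G) = n$ has no scoring options and is trivially canonical, while $\phi_1(G) = \{n-1|n+1\}$ has a single integer option on each side, so neither domination nor reversibility is possible. Note that Theorem~\ref{reduction-3} gives $\phi_i(G) \in \mathcal{J} \subseteq \mathcal{I}$, so ``canonical'' is well-defined on these scoring games, and canonicality of the recursive subgames will come from the inductive hypothesis.

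In the inductive case, $\phi_0(G) = \{\phi_1(G^L)|\phi_1(G^R)\}$ and $\phi_1(G) = \{\phi_0(G^L)|\phi_0(G^R)\}$, and each scoring option is canonical by the inductive hypothesis, so only top-level domination and reversibility require attention. Non-domination is uniform across both $\phi_0(G)$ and $\phi_1(G)$: if $\phi_{1-i}(G^{L_1}) \gtrsim \phi_{1-i}(G^{L_2})$ for two distinct partizan left options of $G$, both scoring games lie in $\mathcal{J}$ with common parity $1-i$, so Corollary~\ref{reduction-1.5} converts this to $G^{L_1} \ge G^{L_2}$ in $\mathbf{Pg}$, contradicting canonicality of $G$; right options are symmetric.

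Reversibility splits based on whether the partizan option in question has integer value. If $G^L$ does not equal an integer, the right scoring options of $\phi_{1-i}(G^L)$ all have the form $\phi_i(G^{LR})$, so any hypothetical $\phi_i(G^{LR}) \lesssim \phi_i(G)$ has both sides in $\mathcal{J}$ with matching parity, and Corollary~\ref{reduction-1.5} converts it to $G^{LR} \le G$, contradicting canonicality of $G$. The integer case inside $\phi_1(G)$ is also immediate, since $\phi_0(n) = n$ has no scoring options at all. The one genuinely new phenomenon, and the main obstacle of the proof, is the artificial right option $n+1$ inside $\phi_1(G^L) = \{n-1|n+1\}$ when $G^L$ has integer value $n$: reversibility of this left option of $\phi_0(G)$ translates via Corollary~\ref{reduction-1.5} into $n+1 \le G$ in $\mathbf{Pg}$, a condition not automatically forbidden by the canonical form of $G$ (for $n \ge 0$, the partizan canonical form of $G^L = n$ has no right options at all that could have been reversed away).

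To surmount this obstacle I will prove a side lemma: if $G$ is a canonical partizan game not equal to any integer and $G^L$ is a left option of integer value $n$, then $G \not\ge n+1$. Assume for contradiction $G \ge n+1$. Then $G^R \triangleright G \ge n+1$ forces $n+1 \lhd G^R$ for every right option, and non-domination of $G^L$ forbids any $G^{L'} \ge n+1$ (which would strictly dominate $G^L = n$), so also $G^{L'} \lhd n+1$ for every left option. The number $n+1$ thus sits $\lhd$-between all options of $G$, and Conway's simplicity theorem declares $G$ to equal some number $y$; since $G$ is not an integer, $y$ is a non-integer dyadic whose canonical form has a unique left option $y^-$, forcing $y^- = n$ and hence $y \in (n, n + \tfrac{1}{2}]$. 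This gives $y < n+1$, contradicting $G = y \ge n+1$. The mirror side lemma rules out $G \le n-1$ whenever $G^R$ has integer value $n$, disposing of the artificial left option $n-1$ of $\phi_1(G^R)$ on the right side of $\phi_0(G)$, and these side lemmas together close the induction.
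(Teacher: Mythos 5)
Your proof is correct and takes essentially the same route as the paper's: the routine induction plus Corollary~\ref{reduction-1.5} disposes of domination and of reversibility through non-integer options, and the one genuine obstacle --- the artificial option $n+1$ of $\phi_1(n)$ --- is handled exactly as in the paper, by translating reversibility into $n+1 \le G$ and contradicting the simplicity rule together with non-domination of the integer left option. The only cosmetic difference is that you reach the final contradiction via the canonical form of the dyadic $y$ rather than noting directly that the simplest number strictly between all options would be an integer.
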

\begin{proof}
This follows easily from induction, modulo the following fact: if $G$ is not an integer (in value), but has the integer $n$ as an option, then the option
$\phi_1(n) = \{n-1|n+1\}$ of $\phi_0(G)$ is not reversible.  Without loss of generality, $n$ is a left option of $G$.  Suppose
$\phi_1(n)$ is reversible.  Then $n + 1 \lesssim \phi_0(G)$.  Thus $n + 1 \le G$.  Then $n + 1 \lhd G^R$ for every right option of $G$. Since $G$ is
not an integer, $G^L \lhd n + 1$ cannot hold for all $G^L$.  So for some $G^L$,
\[ n < n + 1 \le G^L.\]
Then $n$ is a dominated left option of $G$, a contradiction.
%
\end{proof}

\begin{lemma}\label{canonical-3}
Let $G$ be a well-tempered scoring game for which $\gap_0(G) = 0$ and $\gap_1(G) \le 1$.  If $G$ is canonical, then $\psi(G)$ is in canonical form
(assuming that $\psi(n)$ was chosen to be in canonical form for all integers $n$).
\end{lemma}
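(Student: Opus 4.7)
The plan is to induct on the total number of subgames of $G$. The base case where $G$ is an integer is immediate, since by hypothesis $\psi(G)$ is chosen canonical. For the inductive step, write $G = \{G^L|G^R\}$, so $\psi(G) = \{\psi(G^L)|\psi(G^R)\}$; by induction each $\psi(G^L)$ and $\psi(G^R)$ is already in canonical partizan form. Because $\gap_0$ and $\gap_1$ are monotone on subgames and canonicity passes to subgames, every subgame of $G$ inherits the hypotheses, lies in $\mathcal{J}$, and is canonical, so Theorem~\ref{reduction-1} and Corollary~\ref{reduction-1.5} are available throughout. To conclude that $\psi(G)$ is canonical it suffices to check: (i) when $G$ is not itself an integer, $\psi(G)$ is not partizan-equivalent to any integer (so the displayed form really is the canonical form of its equivalence class); (ii) no two sibling options of $\psi(G)$ dominate each other; and (iii) no option of $\psi(G)$ is reversible.

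For (i), suppose $\psi(G) = n$ in $\mathbf{Pg}$. If $G$ is odd, Theorem~\ref{reduction-1}(c) forces $\lout(G) < n < \rout(G)$, whence $\rout(G) - \lout(G) \ge 2$, contradicting $\gap_1(G) \le 1$. If $G$ is even, Theorem~\ref{reduction-1}(b) gives $\lout(G) = n = \rout(G)$; since every option of $G$ is in $\mathcal{I}$, Lemma~\ref{gap-rule} yields $G \approx n$. Choose a left option $G^L$ (necessarily not an integer, since $G$ is an even non-integer) with $\rout(G^L) = n$, and a right option $G^{LR}$ with $\lout(G^{LR}) = n$; because $G^{LR}$ is even and in $\mathcal{I}$, $\rout(G^{LR}) \le \lout(G^{LR}) = n$, so $G^{LR} \lesssim n \approx G$, making $G^L$ reversible in $G$ and contradicting canonicity. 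For (ii), if $\psi(G^{L'}) \ge \psi(G^L)$ in $\mathbf{Pg}$, then since $G^L, G^{L'}$ share parity and both lie in $\mathcal{J}$, Corollary~\ref{reduction-1.5} gives $G^{L'} \gtrsim G^L$, so $G^L$ is dominated in $G$; the right-hand side is symmetric.

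For (iii), first apply the argument of (i) to each canonical non-integer subgame $G^L$ to conclude that $\psi(G^L) \not\approx$ integer; by induction the canonical form of $\psi(G^L)$ is then the literal form $\{\psi(G^{LL})|\psi(G^{LR})\}$, and a reversibility relation $\psi(G^{LR}) \le \psi(G)$ pulls back via Corollary~\ref{reduction-1.5} (noting $G$ and $G^{LR}$ have the same parity) to $G^{LR} \lesssim G$, contradicting canonicity. When instead $G^L = m$ is an integer (which can only happen if $G$ is odd, since even non-integers have odd options), the canonical partizan form of $m$ contributes a right option only when $m < 0$, namely $m+1$; if $m + 1 \le \psi(G)$, Theorem~\ref{reduction-1}(c) combined with $\gap_1(G) \le 1$ forces $\lout(G) \ge m+1$, so some other left option $G^{L'}$ satisfies $\rout(G^{L'}) \ge m+1$, and Corollary~\ref{reduction-1.5} together with Theorem~\ref{reduction-1}(b) then gives $\psi(G^{L'}) \ge m$, i.e., $G^{L'} \gtrsim G^L$, dominating $G^L$ and again contradicting canonicity. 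The symmetric argument handles an integer right option $G^R = m > 0$ whose canonical form contributes the extra left option $m-1$. The main obstacle is precisely this integer-option bookkeeping: an integer $m$ as a partizan game has spurious canonical options ($m \pm 1$) with no counterpart in the scoring game $G^L = m$, so the proof must reach back to the structure of $G$ itself, and exploit the tight outcome-to-value dictionary of Theorem~\ref{reduction-1} together with the restrictive $\gap_1(G) \le 1$ hypothesis, to manufacture the required contradiction.
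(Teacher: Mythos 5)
Your proof is correct and takes essentially the same route as the paper: an induction in which the only nontrivial point is that an integer option $m$ of an odd-tempered $G$ contributes a spurious reversing option $m \pm 1$ once $\psi(m)$ is put in canonical form, and like the paper you dispatch this via Theorem~\ref{reduction-1} together with the $\gap_1(G) \le 1$ hypothesis. You spell out the routine domination/reversibility checks that the paper compresses to ``trivially by induction,'' and you re-derive a symmetric form of Lemma~\ref{canonical-4} rather than citing it, but otherwise the arguments coincide.
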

\begin{proof}
This theorem follows trivially by induction, except that we need to verify one special case: if $G$ is odd-tempered and $n \in \mathbb{Z}$
is an option of $G$, then $\psi(n)$ is not a reversible option of $\psi(G)$.  Without loss of generality, $n$ is a left option of $G$.
We need to show that no right option of $\psi(n) = n$ reverses Left's move from $\psi(G)$ to $\psi(n)$.  If $n \ge 0$, then $n$ has no right options,
so suppose $n < 0$.  Since we are taking $\psi(n)$ in canonical form, the only right option of $n$ is $n + 1$.  Thus, we are reduced to proving that
if $G$ is odd-tempered and $\gap_1(G) \le 1$ and $n$ is a left option of $G$, then $n+1 \not \le \psi(G)$.  By Theorem~\ref{reduction-1},
this amounts to showing that $n + 1 \ge \rout(G)$.  But otherwise, we would have
\[  \lout(G) =  n < n + 1 < \rout(G) \]
contradicting $\gap_1(G) \le 1$.
\end{proof}

\begin{lemma}\label{canonical-4}
If $G \in \mathcal{I}$ is canonical, and $G \approx n$ for some integer $n$, then $G = n$.
\end{lemma}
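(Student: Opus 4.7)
The plan is to reduce the statement to the uniqueness of canonical form for partizan games, using the map $\psi$ of Section~\ref{sec:psi} together with heating. First, if $G$ is itself a scoring integer, then taking $X = 0$ in the definition of $\approx$ immediately yields $G = n$. So I may assume $G = \{G^L \mid G^R\}$ with both option sets nonempty (Definition~\ref{def:wtsg}); moreover $G$ must be even-tempered by Theorem~\ref{parity-incomparable}, since $n$ is. The aim is to derive a contradiction.

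Next, I heat $G$ by a large integer $t$, replacing $G$ with $\int^t G$. By Lemma~\ref{canonical-1} this preserves canonicality; by (\ref{heat-gap}) of Lemma~\ref{early-lemma} it preserves $\gap_0(G) = 0$ while decreasing $\gap_1(G)$ by $2t$; and since $\int^t n = n$ and heating is compatible with $\approx$ by Lemma~\ref{obvious-compatibles}, the equivalence $\int^t G \approx n$ still holds. The inductive definition of $\int^t$ sends non-integers to non-integers, so $\int^t G$ still has nonempty options on both sides. After taking $t$ sufficiently large, I may therefore assume without loss of generality that $G \in \mathcal{J}$ and $\gap_1(G) \le 1$, while retaining all the other hypotheses. (Once I prove $\int^t G = n$, the fact that heating a non-integer yields a non-integer recovers $G = n$.)

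Now Lemma~\ref{canonical-3} applies, showing that $\psi(G)$ is in partizan canonical form, while Corollary~\ref{reduction-1.5} combined with $G \approx n$ gives $\psi(G) = n$ as partizan game values. Hence $\psi(G)$ must be the unique canonical representative of the partizan integer $n$, which is $\{n-1 \mid \}$ for $n > 0$, $\{ \mid n+1\}$ for $n < 0$, and $\{ \mid \}$ for $n = 0$; in every case at least one side of its option list is empty. But by construction $\psi(G) = \{\psi(G^L) \mid \psi(G^R)\}$ has nonempty option sets on both sides, a contradiction. Thus $G$ was an integer after all, and therefore $G = n$. The main thing to check carefully is that the heating reduction preserves every hypothesis (canonicality, membership in $\mathcal{I}$, the equivalence with $n$, and non-integrality); everything else is immediate from the machinery of $\psi$ and the uniqueness of partizan canonical forms.
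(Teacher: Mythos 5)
Your proof is correct, but it takes a genuinely different route from the paper's. The paper's argument is short and direct: assuming $G$ is a non-integer, it uses $\rout(G) = n$ to find a right option $G^R$ and then a left option $G^{RL}$ of $G^R$ with $\rout(G^{RL}) = n$, computes $\rout(G^{RL} - G) = \rout(G^{RL} - n) = 0$, and concludes via Lemma~\ref{i-games-nice} that $G^{RL} \gtrsim G$, exhibiting $G^R$ as a reversible option — a contradiction. Yours instead reduces to partizan games: heat to push $G$ into $\mathcal{J}$ with $\gap_1 \le 1$ (preserving canonicality by Lemma~\ref{canonical-1}, the equivalence $\approx n$ by Lemma~\ref{obvious-compatibles}, and non-integrality), then invoke Lemma~\ref{canonical-3} to get that $\psi(G)$ is a canonical partizan form, Corollary~\ref{reduction-1.5} to get $\psi(G) = n$ in value, and the standard uniqueness of partizan canonical forms to force $\psi(G)$ to have an empty option set, contradicting that $G$ (hence $\psi(G)$) has nonempty options on both sides. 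The paper's approach is more elementary and self-contained; yours leans on the heavier $\psi$/heating machinery of \S\ref{sec:psi}, which fits the spirit of Section~\ref{sec:canonical} nicely but imports the uniqueness of partizan canonical forms as an external ingredient. Both proofs are valid; there is no circularity in yours since Lemmas~\ref{canonical-1} and \ref{canonical-3} and Corollary~\ref{reduction-1.5} all precede Lemma~\ref{canonical-4} in the paper.
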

\begin{proof}
Suppose for the sake of contradiction that $G$ is not an integer.  Then $\rout(G) = n$, so there is some
right option $G^R$ of $G$ with $\lout(G^R) = n$.  Since $G^R$ is odd-tempered, it is not an integer, so there is some left option
$G^{RL}$ of $G^R$ with $\rout(G^{RL}) = n$.  As $G^{RL} - G \approx G^{RL} - n$, $\rout(G^{RL} - G) = \rout(G^{RL}) - n = 0$.  Then by
Theorem~\ref{i-games-nice}, $G^{RL} \gtrsim G$, and $G$ has a reversible right option, a contradiction.
\end{proof}

\begin{lemma}\label{canonical-5}
Let $G$ be a canonical well-tempered scoring game with $\gap_0(G) = 0$ and $\gap_1(G) \le 1$.  Then $\phi_{\pi(G)}(\psi(G)) = G$
(exactly, not up to equivalence).
\end{lemma}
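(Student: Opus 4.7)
My plan is to induct on the complexity $|G|$ (the total number of subgames). The base case $|G|=1$ means $G$ is an integer $n$, which is even-tempered, so $\pi(G)=0$; by the integer branches of the definitions $\psi(n)=n$ and $\phi_0(n)=n$, yielding $\phi_{\pi(G)}(\psi(G))=n=G$ on the nose.

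For the inductive step, write $G = \{G^L \mid G^R\}$ with nonempty option sets (guaranteed by Definition~\ref{def:wtsg}). Each option is a subgame of $G$ and so is canonical; gaps do not grow on subgames, so each option still satisfies $\gap_0 = 0$ and $\gap_1 \le 1$; and each option has parity $1-\pi(G)$. By the inductive hypothesis applied to the options, $\phi_{1-\pi(G)}(\psi(G^L)) = G^L$ for every left option, and similarly $\phi_{1-\pi(G)}(\psi(G^R)) = G^R$ for every right option.

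Now consider the partizan game $\psi(G) = \{\psi(G^L) \mid \psi(G^R)\}$. I would first invoke Lemma~\ref{canonical-3} to conclude that $\psi(G)$ is in canonical form in the partizan sense. The key claim is then that $\psi(G)$ is not equivalent to any integer, which eliminates the first (integer) branch in the definition of $\phi_{\pi(G)}$. To see this, note that the canonical form of an integer $n$ in partizan game theory has no options on at least one side (it is $\{n-1 \mid \}$ for $n>0$, $\{\mid\}$ for $n=0$, and $\{\mid n+1\}$ for $n<0$). Since $\psi(G)$ is canonical and has nonempty option sets on both sides, it cannot be the canonical form of any integer, and by the uniqueness of canonical forms in the partizan theory it cannot be equivalent to any integer.

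Therefore the second branch in the definition of $\phi_{\pi(G)}$ applies, and expanding this branch on the specific form $\{\psi(G^L)\mid\psi(G^R)\}$ produced by the recursive definition of $\psi$, followed by the inductive hypothesis, gives $\phi_{\pi(G)}(\psi(G)) = \{\phi_{1-\pi(G)}(\psi(G^L))\mid\phi_{1-\pi(G)}(\psi(G^R))\} = \{G^L \mid G^R\} = G$ on the nose. The main technical obstacle is ruling out the integer branch of $\phi$, and this is exactly where Lemma~\ref{canonical-3} is essential. For the even-tempered case one could alternatively invoke Lemma~\ref{canonical-4}: if $\psi(G) = n$ as a partizan game, Corollary~\ref{reduction-1.5} would force $G \approx n$, and then Lemma~\ref{canonical-4} would force $G = n$ on the nose, contradicting the assumption of the inductive step; but the canonical-form argument handles both parities uniformly.
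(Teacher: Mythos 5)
Your proof is correct, but it takes a genuinely different route from the paper's. The paper treats the two parities separately: in the even-tempered case, if $\psi(G) = m$ were an integer, then $G \approx \phi_0(m) = m$ and Lemma~\ref{canonical-4} would force $G = m$, contradicting that we are in the inductive step; in the odd-tempered case, $\psi(G) = m$ would give $G \approx \phi_1(m) = \{m-1 \mid m+1\}$, forcing $\gap_1(G) \ge 2$ against the hypothesis $\gap_1(G) \le 1$. You instead invoke Lemma~\ref{canonical-3} once to conclude that $\psi(G)$ is a canonical partizan game, and then rule out the integer branch of $\phi_{\pi(G)}$ by the structural observation that a canonical partizan game with nonempty option sets on both sides cannot be the canonical form of (and hence cannot equal) any integer. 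Your version is more uniform — both parities are handled at a stroke, and the explicit gap arithmetic is replaced by a purely formal argument about canonical integer forms — at the cost of introducing Lemma~\ref{canonical-3} as a prerequisite, which the paper's proof of this particular lemma avoids. You also correctly identified the paper's even-tempered argument as an alternative in your closing remark.
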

\begin{proof}
We proceed by induction on $G$.  If $G$ is an integer $n$, then $\psi(G) \equiv n$, so $\phi_{\pi(G)}(\psi(G))$ is automatically
$n = G$.  Otherwise, if $G$ is even-tempered, then by induction
\[ \phi_0(\psi(G)) = \{\phi_1(\psi(G)^L)|\phi_1(\psi(G)^R)\} = \{\phi_1(\psi(G^L))|\phi_1(\psi(G^R))\} = \{G^L|G^R\}\]
\emph{unless} $\psi(G)$ equals an integer.  But if $\psi(G) = m$ for some $m \in \mathbb{Z}$,
then $G \approx \phi_0(m) = m$.  Then by Lemma~\ref{canonical-4}, $G = m$, a contradiction.

Finally suppose $G$ is odd-tempered.  Then again, by induction,
\[ \phi_1(\psi(G)) = \{\phi_0(\psi(G^L))|\phi_0(\psi(G^R))\} = \{G^L|G^R\}\]
\emph{unless} $\psi(G)$ equals an integer.  But if $\psi(G) = m$, then $G \approx \phi_1(m) = \{m-1|m+1\}$, so
\[ \rout(G) = \rout(\{m-1|m+1\}) = m+1\]
\[ \lout(G) = \lout(\{m-1|m+1\}) = m - 1\]
and thus $\gap_1(G) \ge 2$, a contradiction.
\end{proof}

\begin{theorem}
If $G$ is invertible (modulo equivalence), then there is a unique $H \in \mathcal{I}$ with $H \approx G$ and $H$ canonical.
\end{theorem}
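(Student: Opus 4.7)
The plan is to bootstrap the canonical-form theorem from the analogous theorem for short partizan games, using the isomorphism $\mathbf{J}^0 \cong \mathbf{Pg}$ of \S\ref{sec:psi} to shuttle $G$ back and forth. Since $G$ is invertible, Corollary~\ref{i-stands-for-invertible} lets me replace it by an equivalent representative in $\mathcal{I}$, so I will assume $G \in \mathcal{I}$ throughout. The key move is that $\mathcal{J}$ is accessible from $\mathcal{I}$ by heating, and on $\mathcal{J}$ the map $\psi$ and its inverses $\phi_0, \phi_1$ faithfully translate between scoring games and partizan games (Theorems~\ref{reduction-1}, \ref{reduction-3}).

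For existence, I would heat $G$ by some $t \gg 0$ chosen large enough that $\gap_1(\int^t G) = \gap_1(G) - 2t \le 1$, which is possible by (\ref{heat-gap}) of Lemma~\ref{early-lemma}. Since $\int^t G \in \mathcal{J}$, I can form $\psi(\int^t G) \in \mathbf{Pg}$ and take its canonical partizan-game form $P$. Setting $H = \int^{-t}\phi_{\pi(G)}(P)$, Theorem~\ref{reduction-3} puts $\phi_{\pi(G)}(P)$ in $\mathcal{J} \subseteq \mathcal{I}$; Lemma~\ref{canonical-2} says it is canonical, and Lemma~\ref{canonical-1} transfers canonicity through cooling, so $H \in \mathcal{I}$ is canonical. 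Finally $H \approx G$ follows because $\psi(\phi_{\pi(G)}(P)) = P = \psi(\int^t G)$ in $\mathbf{Pg}$, which forces $\phi_{\pi(G)}(P) \approx \int^t G$ by the injectivity of $\psi$ on $\mathcal{J}$ within each parity class (Corollary~\ref{reduction-1.5}), and $\int^{-t}$ respects $\approx$ by Lemma~\ref{obvious-compatibles}.

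For uniqueness, suppose $H_1, H_2 \in \mathcal{I}$ are both canonical and equivalent to $G$. I would pick a single $t \gg 0$ with $\gap_1(\int^t H_i) \le 1$ for both $i$. By Lemma~\ref{canonical-1} each $\int^t H_i$ stays canonical, and by Lemma~\ref{canonical-3} its image $\psi(\int^t H_i) \in \mathbf{Pg}$ is in canonical partizan form. These two images represent the same partizan equivalence class, since $H_1 \approx H_2$ forces equality of $\psi$-values in $\mathbf{Pg}$, and uniqueness of canonical forms in $\mathbf{Pg}$ then makes them equal as actual partizan games. Lemma~\ref{canonical-5} now recovers $\int^t H_i = \phi_{\pi(G)}(\psi(\int^t H_i))$ on the nose, so $\int^t H_1 = \int^t H_2$, and applying $\int^{-t}$ (using $\int^{-t}\int^t = \int^0 = \mathrm{id}$ from (\ref{double-heat})) yields $H_1 = H_2$ exactly.

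The only delicate point is choosing $t$ large enough that $\gap_1(\int^t H_i) \le 1$, since Lemmas~\ref{canonical-3} and \ref{canonical-5} require this sharper bound rather than the $\gap_1 \le 2$ defining $\mathcal{J}$; this is immediate from the formula $\gap_1(\int^t G) = \gap_1(G) - 2t$ because $\gap_1$ is integer-valued (or $-\infty$). Beyond that threshold the proof is pure bookkeeping through the established dictionary between scoring and partizan games, so no essentially new combinatorial content is required; the substance has been packaged into the preceding lemmas and into the classical partizan canonical-form theorem.
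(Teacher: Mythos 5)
Your argument is correct and is essentially the same as the paper's: reduce to $G \in \mathcal{I}$ via Corollary~\ref{i-stands-for-invertible}, heat into $\mathcal{J}$, shuttle through $\psi$ to the classical partizan canonical-form theorem, pull back with $\phi_{\pi(G)}$, and cool; uniqueness uses the sharper $\gap_1 \le 1$ bound so that Lemmas~\ref{canonical-3} and \ref{canonical-5} apply on the nose. The one cosmetic difference is that you impose $\gap_1 \le 1$ already in the existence half, where the paper only needs $\int^t G \in \mathcal{J}$; this is harmless, and in fact you spell out a step (why $\phi_{\pi(G)}(P) \approx \int^t G$, via Corollary~\ref{reduction-1.5}) that the paper leaves implicit.
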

\begin{proof}
By Corollary~\ref{i-stands-for-invertible}, we can assume without loss of generality that $G \in \mathcal{I}$.  By taking $t$ sufficiently large, we get
$\int^t G \in \mathcal{J}$.  Let $x$ be the canonical form of $\psi\left(\int^t G\right)$.  Then by Lemma~\ref{canonical-2}, $K = \phi_{\pi(G)}(x)$ is canonical.
But since $x = \psi\left(\int^t G\right)$,
\[ K = \phi_{\pi(G)}(x) \approx \int^t G.\]
Then by Lemma~\ref{canonical-1}, $\int^{-t} K$ is canonical.  But it is in $\mathcal{I}$ and $\int^{-t} K \approx G$, so we have shown existence.

For uniqueness, suppose that $H$ and $H'$ are two canonical games in $\mathcal{I}$, with $H \approx H'$ but $H \ne H'$.  Then by taking $t$ sufficiently large, we can
arrange for
\[ \gap_1\left(\int^t H\right) \le 1 \ge \gap_1\left(\int^t H'\right)\]
Also, since $\int^t$ is invertible, $\int^t H \ne \int^t H'$ and $\int^t H \approx \int^t H'$.  So without loss of generality, $H$ and $H'$
have $\gap_1 \le 1$.  Then $\psi(H)$ and $\psi(H')$ are both in canonical form by Lemma~\ref{canonical-3}.  Since they are canonical forms
of equivalent games, $\psi(H) \equiv \psi(H')$.
Therefore, letting $i = \pi(H) = \pi(H')$
\[ \phi_i(\psi(H)) = \phi_i(\psi(H')).\]
But by Lemma~\ref{canonical-5}, the two sides equal $H$ and $H'$.
\end{proof}

\section{Boolean-valued Games}\label{sec:boolean}
In this section we focus on $\{0,1\}$-valued scoring games, which we call \emph{Boolean-valued games}.
There are two interesting order-preserving
functions $\{0,1\} \times \{0,1\} \to \{0,1\}$, namely logical \textsc{or} and logical \textsc{and}.  We denote these operations, and their
extensions to Boolean-valued games, by $\vee$ and $\wedge$, respectively.
Note that for $i,j \in \{0,1\}$, $i \vee j = \max(i, j)$ and $i \wedge j = \min(i, j)$.

One can view the two possible scores $0$ and $1$ as victory for Right and victory for Left, respectively.
In this interpretation, $G \wedge H$ and $G \vee H$ are compound games played as follows: $G$ and $H$ are played in parallel, like a disjunctive sum.
If one player wins both $G$ and $H$, she wins the compound.  If there is a tie, $G \vee H$ resolves it in favor of Left, and $G \wedge H$ resolves
it in favor of Right.  Thus $G \vee H$ and $G \wedge H$ can be seen as biased disjunctive sums.

By Theorem~\ref{structure-small}, $\vee$ and $\wedge$ induce well-defined order-preserving operations
\[ \mathbf{W}_{\{0,1\}} \times \mathbf{W}_{\{0,1\}} \to \mathbf{W}_{\{0,1\}}\]
which are characterized by their restrictions
\[ \mathbf{I}^0_{\{0,1\}} \times \mathbf{I}^0_{\{0,1\}} \to \mathbf{I}^0_{\{0,1\}}\]
to even-tempered invertible games.

Our goal is to determine the structure of $\mathbf{I}^0_{\{0,1\}}$ and the effects of $\wedge$ and $\vee$ on $\mathbf{I}^0_{\{0,1\}}$.
The main result in this direction is Theorem~\ref{boolean-summary}.

\subsection{Classification of Boolean-valued Games}
By Theorem~\ref{k-non-void}(a) and (d), every Boolean-valued game lives in $\mathcal{K}$, and every element of $\mathcal{I}_{\{0,1\}}$ is in $\mathcal{J}$.
Therefore, it is reasonable to apply the maps $\psi^\pm$ and $\psi$ to Boolean-valued games, and by Theorem~\ref{reduction-2}, $\psi^\pm(G) = \psi(G^\pm)$.

\begin{lemma}\label{boolean-psi-defaults}
If $G \in \mathcal{W}_{0,1}$, then $\psi(G) = \psi^-(G)$.
\end{lemma}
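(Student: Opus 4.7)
The plan is to induct on the game tree of $G$, with a parallel auxiliary claim that I prove first.

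The auxiliary claim is: for every Boolean $H$, if $H$ is even-tempered then $0 \le \psi(H) \le 1$ as partizan values, and if $H$ is odd-tempered then $\psi(H) \rhd 0$ and $\psi(H) \lhd 1$. I would prove this by simultaneous induction on $H$. The lower bounds ($\psi(H) \ge 0$ for even $H$ and $\psi(H) \rhd 0$ for odd $H$) follow immediately from Lemma \ref{outcome-somewhat-preserved}, since $\rout(H), \lout(H) \in \{0, 1\}$ for Boolean $H$. The upper bounds require a direct game-tree analysis: for even $H = \{H^L | H^R\}$, to show $\psi(H) \le 1$, I examine $\psi(H) - 1$; its only Left options have the form $\psi(H^L) - 1$, and by the inductive hypothesis for odd $H^L$, $\psi(H^L) \lhd 1$, so $\psi(H^L) - 1 \lhd 0$, meaning Right wins going first after any Left move. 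Thus Left loses going first in $\psi(H) - 1$, so $\psi(H) \le 1$. The odd case is symmetric. Sharper one-ply bounds $\psi(H) \ge -1$ and $\psi(H) \le 2$ (both for odd $H$) follow from similar arguments.

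With the auxiliary in hand, the main induction is straightforward. The base case $G \in \{0, 1\}$ is immediate. For the inductive step with $G = \{G^L | G^R\}$, the inductive hypothesis gives $\psi^-(G) = \{\psi(G^L) | \psi(G^R)\}_-$, which agrees with $\psi(G) = \{\psi(G^L) | \psi(G^R)\}$ unless some integer $n$ satisfies $\psi(G^L) \lhd n \lhd \psi(G^R)$ for all $L, R$; in that case the smallest such $n_0$ equals $\psi^-(G)$, and I must show $\psi(G) = n_0$ as a partizan value. If $G$ is odd, then $\psi(G^L), \psi(G^R) \in [0, 1]$ force $n \ge 1$ from the left condition and $n \le 0$ from the right condition, a contradiction, so no such $n$ exists. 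If $G$ is even, the auxiliary bounds on odd Boolean options force $n_0 \in \{0, 1\}$. When $n_0 = 0$, Lemma \ref{outcome-somewhat-preserved} applied to each odd option $G^L$ translates $\psi(G^L) \lhd 0$ into $\rout(G^L) = 0$, hence $\lout(G) = 0$, and then Lemma \ref{outcome-somewhat-preserved} gives both $\psi(G) \le 0$ and $\psi(G) \ge 0$, so $\psi(G) = 0 = n_0$. When $n_0 = 1$, the failure of $n = 0$ forces $\lout(G) = 1$, and one analyzes $\psi(G) - 1$: each Right option $\psi(G^R) - 1$ has $\psi(G^R) \rhd 1$ so Left wins going next, while the extra Right option $\psi(G)$ itself satisfies $\psi(G) \rhd 0$ (from $\lout(G) = 1$) so Left again wins going next. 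Hence Right loses going first in $\psi(G) - 1$, giving $\psi(G) \ge 1$, and combined with $\psi(G) \le 1$ from the auxiliary we get $\psi(G) = 1 = n_0$.

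The main obstacle I anticipate is keeping the whole argument off of the tempting identity $\psi(G - k) = \psi(G) - k$, which does not hold in general: a direct computation on $G = \{\{0|0\} | \{1|1\}\}$ shows that $\psi(G) = 0$ and $\psi(G - 1) = 0$, even though the partizan subtraction $\psi(G) - 1 = -1$. The game-tree arguments for the auxiliary upper bounds and for the $n_0 = 1$ case are chosen precisely to sidestep this issue by reasoning only with the options one ply at a time.
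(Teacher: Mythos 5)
Your proof is correct, but it takes a genuinely different route from the paper's. The paper argues by contradiction using the structural machinery developed in \S\ref{sec:psi}: since every Boolean game lies in $\mathcal{K}$ and every Boolean element of $\mathcal{I}$ lies in $\mathcal{J}$, a mismatch $\psi(G) \ne \psi^-(G)$ would force $\psi^-(G) = \psi(G^-)$ to be a \emph{negative} integer $n$, whence $G^- \approx \phi_{\pi(G)}(n)$ would have to be (equivalent to) $n$ or $\{n-1|n+1\}$; but $G^-$ can be chosen $\{0,1\}$-valued by Theorem~\ref{sidling}, so its outcomes lie in $\{0,1\}$ and cannot be negative. The machinery this invokes — Theorem~\ref{k-non-void}(d), $\psi^\pm(G) = \psi(G^\pm)$ from Theorem~\ref{reduction-2}, and the $\phi/\psi$ inverse pair from Theorem~\ref{reduction-3} — makes the proof two lines, at the cost of needing the full reduction theory. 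Your approach instead stays inside the partizan game tree: you prove the sharper range bounds $0 \le \psi(H) \le 1$ (even) and $-1 \le \psi(H)$, $\psi(H) \le 2$, $0 \lhd \psi(H) \lhd 1$ (odd) by direct one-ply analysis, then pin $n_0$ down to $\{0,1\}$ and verify $\psi(G) = n_0$ in each case using Lemma~\ref{outcome-somewhat-preserved} and one more game-tree argument. This is longer but uses only Lemma~\ref{outcome-somewhat-preserved} and elementary partizan reasoning, making it accessible before \S\ref{sec:psi} is fully developed. You are also right to flag and sidestep the false identity $\psi(G-k) = \psi(G) - k$ — that is exactly the temptation that makes the naive ``translate Lemma~\ref{outcome-somewhat-preserved} by $k$'' argument fail, and your counterexample $\{\{0|0\}|\{1|1\}\}$ illustrating it is correct. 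The argument details all check out: the reduction to $n_0 \ge 0$ from $\psi(G^L) \ge -1$, to $n_0 \le 1$ from $\psi(G^R) \le 2$, the impossibility of a separating integer when $G$ is odd, the derivation of $\lout(G)$ in each of the two even cases, and the final inequalities $\psi(G) \le 0$, $\psi(G) \ge 0$ (when $n_0 = 0$) and $\psi(G) \ge 1$, $\psi(G) \le 1$ (when $n_0 = 1$).
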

\begin{proof}
Let $G$ be a minimal counterexample.  Then $G$ is not an integer and
\[ \psi(G) = \{\psi(G^L)|\psi(G^R)\} \ne \psi^-(G) = \{\psi(G^L)|\psi(G^R)\}_-.\]
This can only happen if some negative integer $n$ satisfies $\psi(G^L) \lhd n \lhd \psi(G^R)$ for all $G^L$ and $G^R$.
In this case, $\psi^-(G)$ will be a negative integer $n < 0$.  But this means that $G^- \approx n$ or $G^- \approx \{n-1|n+1\}$, which are impossible
since $\lfout(G^-) \in \{0,1\}$ while the left outcomes of $n$ and $\{n-1|n+1\}$ are negative.
\end{proof}
It follows that
\begin{equation} \psi(\mathcal{I}_{\{0,1\}}) = \psi^+(\mathcal{W}_{\{0,1\}}) =  \psi^-(\mathcal{W}_{\{0,1\}}) =
 \psi(\mathcal{W}_{\{0,1\}}).\label{mystery-set}\end{equation}
The first two equalities hold on general principles, by Theorems~\ref{structure-small} and \ref{reduction-2}.

As we noted in \S \ref{sec:psi}, the overheating operation $\int_{\{1/2|\}}^{1/2} = \int_1^{1/2}$ defined recursively
on partizan games by
\[ \int_1^{1/2} G = \left\{\int_1^{1/2} G^L + \frac{1}{2} | \int_1^{1/2} G^R - \frac{1}{2} \right\}\]
is a well-defined order-preserving homomorphism.
Let $S$ be the following set of partizan games:
\[ S = \{0,\frac{1}{4},\frac{3}{8},\frac{1}{2},\frac{1}{2} + *, \frac{5}{8},\frac{3}{4}, 1\}.\]
\begin{theorem}\label{boolean-theorem}
The set $\psi(\mathcal{I}_{\{0,1\}}) = \psi(\mathcal{W}_{\{0,1\}})$ of Equation (\ref{mystery-set}) is the following:
\[ \left\{\int_1^{1/2} x : x \in S\right\} \cup \left\{* + \int_1^{1/2} x : x \in S\right\}\]
More specifically,
\begin{equation} \{\psi(G) : G \in \mathcal{W}_{\{0,1\}},~\pi(G) = 0\} = \left\{\int_1^{1/2} x : x \in S\right\} \label{boolean-even}\end{equation}
and
\begin{equation} \{\psi(G) : G \in \mathcal{W}_{\{0,1\}},~\pi(G) = 1\} = \left\{* + \int_1^{1/2} x : x \in S\right\} \label{boolean-odd}\end{equation}
Also, the two sets $\left\{\int_1^{1/2} x : x \in S\right\}$ and $\left\{* + \int_1^{1/2} x : x \in S\right\}$ are disjoint.
\end{theorem}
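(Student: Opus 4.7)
The plan is to combine the group isomorphism $\psi : \mathbf{J} \to \mathbf{Pg}$ of Corollary~\ref{reduction-1.5} with two structural facts: every Boolean-valued game lies in $\mathcal{K}$ by Theorem~\ref{k-non-void}(d), and every Boolean-valued invertible game lies in $\mathcal{J}$ since its $\gap_1$ is at most $1$. Combined with Theorem~\ref{reduction-2} and Lemma~\ref{boolean-psi-defaults}, these give the equalities of (\ref{mystery-set}), and the kernel analysis following Corollary~\ref{reduction-1.5} shows that $\psi$ is injective on each parity class of $\mathbf{J}$. So the task reduces to a direct enumeration of $\mathbf{I}_{\{0,1\}}$ under $\psi$.

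First I would split by parity using the involution $G \mapsto G + *$ from the Even and Odd subsection, which bijects $\mathcal{I}^0_{\{0,1\}}$ with $\mathcal{I}^1_{\{0,1\}}$. Since $\psi$ is a homomorphism on $\mathcal{J}$ with $\psi(*) = *$, this descends to $y \mapsto y + *$ on $\psi$-images, so (\ref{boolean-odd}) is an immediate consequence of (\ref{boolean-even}). The final disjointness claim reduces to showing $\int_1^{1/2}(x) \neq * + \int_1^{1/2}(y)$ for all $x, y \in S$; by the homomorphism property this is equivalent to $\int_1^{1/2}(x - y) \neq *$, which is a finite check exploiting strict order-preservation of $\int_1^{1/2}$ together with a direct inspection that $*$ is not in its image. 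For the $\supseteq$ direction of (\ref{boolean-even}), I would exhibit, for each of the eight $x \in S$, an explicit $G_x \in \mathcal{I}^0_{\{0,1\}}$ with $\psi(G_x) = \int_1^{1/2}(x)$; natural candidates are $G_0 = 0$, $G_1 = 1$, $G_{1/2} = \{\{0|1\}|\{0|1\}\}$, and analogous nested constructions for the remaining six values, each verified using Theorem~\ref{reduction-1} together with the recursive definition of $\int_1^{1/2}$.

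The main obstacle is the $\subseteq$ direction of (\ref{boolean-even}). I would argue by strong induction on the number of subgames of $G \in \mathcal{I}^0_{\{0,1\}}$. After replacing each option of $G$ by an equivalent game in $\mathcal{I}_{\{0,1\}}$ (which is possible because upsides and downsides of options in $\mathcal{W}_{\{0,1\}}$ lie in $\mathcal{I}_{\{0,1\}}$), every option of $G$ is odd-tempered and, by induction, has $\psi$-value in the eight-element set $* + \int_1^{1/2}(S)$. Thus $\psi(G) = \{\psi(G^L)|\psi(G^R)\}$ is a partizan game with left and right options drawn from a fixed eight-element pool, further constrained by $\rout(G) \le \lout(G)$ (from $G \in \mathcal{I}^0$). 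A finite case analysis — iterating over nonempty subsets of the pool on each side, removing dominated and reversible moves, and applying the simplicity rule where needed — shows the resulting partizan value always lies in $\int_1^{1/2}(S)$. The bookkeeping for this case analysis is the genuinely hard part; I would streamline it by first finding an intrinsic characterization of $\int_1^{1/2}(S)$ within $\mathbf{Pg}$, for instance as the smallest subset of partizan values in $[0,1]$ closed (up to the simplicity rule) under the construction $\{L|R\}$ with $L, R$ taken from $* + \int_1^{1/2}(S)$ subject to an appropriate outcome inequality.
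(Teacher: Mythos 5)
There is a genuine gap, centered on how you set up the $\subseteq$ direction of (\ref{boolean-even}).

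First, the step ``replacing each option of $G$ by an equivalent game in $\mathcal{I}_{\{0,1\}}$'' is false as stated. An option $G^L$ of $G\in\mathcal{I}^0_{\{0,1\}}$ is some Boolean-valued game, not in general invertible, and the upside and downside $(G^L)^\pm$ are only $\approx_\pm$-related to $G^L$, not $\approx$-equivalent to it; $G^L$ need not be equivalent to \emph{anything} in $\mathcal{I}$. What is actually true is that for Boolean-valued $G^L$ we have $\psi(G^L)=\psi^-(G^L)=\psi\bigl((G^L)^-\bigr)$ by Lemma~\ref{boolean-psi-defaults} and Theorem~\ref{reduction-2}, and $(G^L)^-$ can be taken in $\mathcal{I}^1_{\{0,1\}}$. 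That is a weaker and more precise substitute for your claim, and it is a nontrivial input that should be cited rather than waved away as a replacement by an equivalent game.

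Second, even granting the substitute, your induction is circular. You derive (\ref{boolean-odd}) from (\ref{boolean-even}) via the involution $G\mapsto G+*$, and then prove (\ref{boolean-even}) by induction on subgame count over $\mathcal{I}^0_{\{0,1\}}$. But the inductive step for $G\in\mathcal{I}^0_{\{0,1\}}$ needs to know $\psi(G^L)\in X_1$ for its odd-tempered options, i.e.\ it needs (\ref{boolean-odd}) for smaller games. Since your route from even to odd adds a $*$ (which increases the number of subgames rather than decreasing it), that derivation cannot be fed back into the induction. The fix — and the route the paper takes — is to prove a single statement for both parities simultaneously: $\psi(G)\in X_{\pi(G)}$ for all $G\in\mathcal{W}_{\{0,1\}}$, by induction on the number of subgames, with no replacement of options at all. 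This works because $\psi$ is a purely syntactic recursion: $\psi(G)=\{\psi(G^L)\mid\psi(G^R)\}$, so if all $\psi(G^L),\psi(G^R)$ lie in $X_{1-\pi(G)}$, it is enough to verify the purely partizan claim (Claim~\ref{boolean-claim} in the paper) that building a game with options drawn from $X_i$ lands in $X_{1-i}$. Your ``finite case analysis'' is essentially that claim, and your constraint $\rout(G)\le\lout(G)$ from $G\in\mathcal{I}^0$ turns out to be unnecessary — the closure property holds without it. Restricting to $\mathcal{I}^0$ therefore buys you nothing and creates both of the problems above. Your $\supseteq$ direction, the disjointness check, and the reduction via (\ref{mystery-set}) are all fine and match the paper's treatment.
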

\begin{proof}
We omit most of the details of the proof, since it consists entirely of calculations.  One can directly check
that if $x \in S$, then
\[ \phi_0\left(\int_1^{1/2} x\right) \text{ and } \phi_1\left(* + \int_1^{1/2} x\right)\]
are $\{0,1\}$-valued.  So all the specified values occur as $\psi(G)$ for various Boolean-valued games $G$.  The fact
that the two sets $\left\{\int_1^{1/2} x : x \in S\right\}$ and $\left\{* + \int_1^{1/2} x : x \in S\right\}$ can be checked by examination.
The only thing left is a proof that \emph{only} the specified values of $\psi$ occur as values of Boolean-valued games.
Letting $X_0 = \left\{\int_1^{1/2} x : x \in S\right\}$ and $X_1 = \left\{* + \int_1^{1/2} x : x \in S\right\}$, we assert the following:
\begin{claim}\label{boolean-claim}
If $G$ is a partizan game with more than zero left options and more than zero right options, and if every option of $G$ is in $X_i$,
then $G$ is (equivalent to a game) in $X_{1-i}$.
\end{claim}
Since the sets $X_i$ are finite, this can be checked directly.  We might as well assume that the left options of $G$ are pairwise incomparable,
and the same for the right options.  This speeds up the verification, since the posets $X_0$ and $X_1$ have very few antichains, as they are almost linearly ordered.
This leaves less than two hundred cases to check.  Another useful trick is to prove that for any even-tempered game $G$ with $\gap_0(G) = 0$
and $\gap_1(G) \le 1$,
\[ \psi\left(\int^{1/2} G\right) \approx \int_1^{1/2} \psi(G).\]
Then, one can cool all the Boolean-valued games by $1/2$ (which leaves them in $\mathcal{J}$), and one is reduced to considering numbers.
The simplicity rule now comes into play, and the verification becomes even easier.  There are some technical difficulties involved with applying
the $\int^{-1/2}$ operator to odd-tempered games, since this yields half-integer-valued scoring games.
Nevertheless, this approach works, but still degenerates into a case-by-case exhaustion of the possibilities, so we do not pursue it here.

Give Claim~\ref{boolean-claim}, it is easy to verify inductively that $\psi(G) \in X_{\pi(G)}$ for any Boolean-valued game $G$, which is the
remaining statement of the theorem.
\end{proof}

It follows from this theorem that $\mathbf{I}_{\{0,1\}}$ is isomorphic as a poset to the direct product of $S$ and $\mathbb{Z}/2\mathbb{Z}$,
since $\int_1^{1/2}$ is order-preserving. 
By directly considering the structure of the poset, we see that the number of pairs $(x,y) \in S \times S$ with $x \ge y$ is 35.
Therefore there are exactly $35$ even-tempered elements of $\mathbf{W}_{\{0,1\}}$ and exactly 70 elements total.  In other words,
there are 70 Boolean-valued games, up to equivalence.

\begin{definition}
Let $G$ be a Boolean-valued well-tempered game.  Then we define $u^+(G)$ and $u^-(G)$ to be the unique elements of $S$ such that
\[ \psi^\pm(G) = \int_1^{1/2} u^\pm(G)\]
if $G$ is even, and
\[ \psi^\pm(G) = * + \int_1^{1/2} u^\pm(G)\]
if $G$ is odd.
If $u^+(G) = u^-(G)$, we denote the common value by $u(G)$.  We call $u^+(G)$ and $u^-(G)$ \emph{the $u$-values of the game $G$}.
\end{definition}
Note that since $\psi^\pm(* + G) = * + \psi^\pm(G)$, we have $u^\pm(\even(G)) = u^\pm(G)$.

\subsection{Operations on Boolean-valued Games}
To finish our account of Boolean-valued well-tempered games, we need to describe how $u$-values interact with the operations $\wedge$ and $\vee$.
By Theorem~\ref{structure-small}, $\wedge$ and $\vee$ are determined by what they do to $\mathbf{I}^0_{\{0,1\}}$ which we can identify with $S$.
Therefore the following definition makes sense:
\begin{definition}
Let $\cup$ and $\cap$ be the binary operations on $S$ such that for any $G, H \in \mathcal{W}_{\{0,1\}}$,
\[ u^+(G \vee H) = u^+(G) \cup u^+(H)\]
\[ u^-(G \vee H) = u^-(G) \cup u^-(H)\]
\[ u^+(G \wedge H) = u^+(G) \cap u^+(H)\]
\[ u^-(G \wedge H) = u^-(G) \cap u^-(H)\]
\end{definition}
The fact that these identities hold for odd-tempered $G$ or $H$ follows from the equivalence $u^\pm(\even(G)) = u^\pm(G)$.
Note that $\cup$ and $\cap$ inherit commutativity and associativity from $\vee$ and $\wedge$.

\begin{theorem}\label{cup-cap-rule}
For any $x, y \in S$, $x \cup y$ is the maximum $z \in S$ such that $z \le x + y$.
Similarly, $x \cap y$ is the minimum element $w \in S$ such that $w \ge x + y - 1$.
\end{theorem}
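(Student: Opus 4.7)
The plan is to prove the $\cup$ identity directly and deduce the $\cap$ identity by Boolean complementation. Fix $x, y \in S$, choose representatives $G, H \in \mathcal{W}_{\{0,1\}}$ with $u^+(G) = x$ and $u^+(H) = y$, and set $z_0 = \max\{z \in S : z \le x + y\}$. The goal is to establish $u^+(G \vee H) = z_0$.

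For the upper bound $u^+(G \vee H) \le z_0$, I would exploit the pointwise inequalities $\vee(i,j) \le i + j$ and $\vee(i,j) \le 1$ for $i, j \in \{0,1\}$. By (b) of \S\ref{basic-distort}, these lift to $G \vee H \lesssim G + H$ and $G \vee H \lesssim 1$. Applying $\psi^+$—which is additive by Theorem~\ref{plus-minus-homomorphism} and factors as $\int_1^{1/2}$ of the $u$-value by Theorems~\ref{reduction-2} and~\ref{boolean-theorem}—and using that $\int_1^{1/2}$ is strictly order-preserving on $\mathbf{Pg}$, one obtains $u^+(G \vee H) \le x + y$ and $u^+(G \vee H) \le 1$ in $\mathbf{Pg}$. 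Thus $u^+(G \vee H)$ lies in the finite set $T = \{z \in S : z \le x + y\}$. The only incomparable pair in $S$ is $\{1/2,\, 1/2+*\}$, and a quick check rules out the ambiguous configuration (both $1/2$ and $1/2+*$ in $T$ with $5/8$ absent); hence $T$ has a unique maximum $z_0$, so $u^+(G \vee H) \le z_0$.

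The reverse inequality $u^+(G \vee H) \ge z_0$ is the main obstacle. My approach is a finite case analysis on unordered pairs $\{x, y\} \subseteq S$, of which there are at most $36$. For each pair I would fix canonical Boolean representatives of $x$ and $y$ supplied by Theorem~\ref{boolean-theorem}, expand the options of $G \vee H$ recursively, and evaluate $\psi^+(G \vee H) = \{\psi^+((G \vee H)^L) \mid \psi^+((G \vee H)^R)\}_+$ by induction on depth to extract $u^+(G \vee H)$, matching the result against $z_0$ in each case. This tedium parallels the enumeration in the proof of Theorem~\ref{boolean-theorem}, which itself is ultimately resolved by Claim~\ref{boolean-claim}, a finite check rather than a purely structural argument.

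Finally, the $\cap$ formula follows from $\cup$ by Boolean complementation. The pointwise identity $i \wedge j = 1 - ((1 - i) \vee (1 - j))$ on $\{0,1\}$ lifts, via the recursive game definitions together with properties (d) and (e) of \S\ref{basic-distort}, to the scoring-game equivalence $G \wedge H \approx 1 - ((1 - G) \vee (1 - H))$. Using that $\psi^+(1 - G) = 1 - \psi^-(G)$ (from Theorem~\ref{plus-minus-homomorphism} together with $(-G)^+ \approx -G^-$ in Theorem~\ref{structure}), the relation $\int_1^{1/2}(1) = 1$, and injectivity of $\int_1^{1/2}$, one derives $u^+(1 - G) = 1 - u^-(G)$. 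Substituting into the $\cup$ formula and invoking the symmetry $1 - S = S$ (verified element-wise, noting that $1 - (1/2 + *) = 1/2 + *$ because $-* = *$) rewrites the expression as $\min\{w \in S : w \ge x + y - 1\}$, completing the reduction.
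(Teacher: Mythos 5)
Your upper-bound argument and your reduction of $\cap$ to $\cup$ via Boolean complementation (unpacking the paper's terse ``By symmetry(!)'') are both sound. The issue is the reverse inequality $u^+(G\vee H)\ge z_0$, which you label ``the main obstacle'' and propose to settle by a roughly $36$-case enumeration over pairs $\{x,y\}\subseteq S$. That would presumably work, but it is not needed, and the paper closes it in one line using the \emph{same} truncation $f(n)=\min(n,1)$ that you already employed for the upper bound. The observation you are missing is that $f$ restricts to the identity on $\{0,1\}$, so $\tilde f(K)=K$ for every Boolean-valued $K$. Fix even-tempered invertible Boolean representatives $G_s$ with $u^\pm(G_s)=s$. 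If $z\le x+y$ in $\mathbf{Pg}$, then since $\int_1^{1/2}$ and $\psi$ are strictly order-preserving homomorphisms this is equivalent to $G_z\lesssim G_x+G_y$, and applying the order-preserving operation $\tilde f$ (Theorem~\ref{generalized-compatibility}) gives
\[ G_z \;=\; \tilde f(G_z) \;\lesssim\; \tilde f(G_x+G_y) \;=\; G_x\vee G_y, \]
i.e.\ $z\le x\cup y$. Combined with your upper-bound observation $G_x\vee G_y\lesssim G_x+G_y$ (from $f\le\mathrm{id}$ and (b) of \S\ref{basic-distort}), this establishes the full biconditional $G_z\lesssim G_x+G_y\iff G_z\lesssim G_x\vee G_y$ for every $z\in S$, which immediately identifies $x\cup y$ as the maximum of the down-set $\{z\in S: z\le x+y\}$. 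Note this also disposes of your separate worry about whether that down-set has a unique maximum: it equals the principal down-set below $x\cup y$, so it automatically does. In short, your plan is a correct but much heavier route for the lower bound; the structural fact that $\tilde f$ fixes Boolean games eliminates the case analysis entirely.
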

Note that it is not a priori clear that such a $z$ or $w$ exist in general.
\begin{proof}
By symmetry(!) we only need to prove that $x \cup y$ has the stated form.
For $s \in S$, let $G_s \in \mathcal{I}_{(0,1)}$ be even-tempered with $u^\pm(G_s) = s$.  For $x, y, z \in S$, we need
to show that $z \le x \cup y \iff z \le x + y$, or equivalently,
$G_z \lesssim (G_x \vee G_y) \iff z \le x + y$.  Since $\int_1^{1/2}$ and $\psi$ are strictly order-preserving homomorphisms, this
is the same as showing that
\begin{equation} G_z \lesssim (G_x \vee G_y) \iff G_z \le G_x + G_y.\label{cup-cap-goal}\end{equation}
Let $f(n) = \min(n,1)$.  Then $f$ is the identity on $\{0,1\}$, $f(n) \le n$, and $\tilde{f}(G_x + G_y) = G_x \vee G_y$.
Thus
\[ \tilde{f}(G_z) = G_z \text{ and } (G_x \vee G_y) \lesssim G_x + G_y.\]
Therefore
\[ G_z \lesssim G_x + G_y \implies \tilde{f}(G_z) \lesssim (G_x \vee G_y) \implies G_z \lesssim (G_x \vee G_y) \implies G_z \lesssim G_x + G_y.\]
This proves (\ref{cup-cap-goal}).
\end{proof}

We summarize all our results on Boolean-valued well-tempered games in the following theorem:
\begin{theorem}\label{boolean-summary}
Let $S$ be the following set of partizan games:
\[ S = \{0, \frac{1}{4},\frac{3}{8}, \frac{1}{2}, \frac{1}{2}*, \frac{5}{8}, \frac{3}{4}, 1\}\]
For every $G \in \mathcal{W}_{\{0,1\}}$ we have elements $u^\pm(G) \in S$ such that
\[ \psi^\pm(G) = \int_1^{1/2} u^\pm(G)\]
if $G$ is even-tempered, and
\[ \psi^\pm(G) = * + \int_1^{1/2} u^\pm(G)\]
if $G$ is odd-tempered, where $\psi$ is the map of \S\ref{sec:psi}.

Moreover $\mathbf{W}_{\{0,1\}}$ is isomorphic to $\{(x,y,i) : x,y \in S,~x \ge y, i \in \{0,1\}\}$ via the map induced by
$G \mapsto (u^+(G), u^-(G), \pi(G))$.  This map is order-preserving, in the sense that
if $G$ and $H$ are two elements of $\mathcal{W}_{\{0,1\}}$, then $G \lesssim H$ if and only if
\[ u^+(G) \le u^+(H),~ u^-(G) \le u^-(H),~ \text{and}~ \pi(G) \le \pi(H).\]

If $G, H \in \mathcal{W}_{\{0,1\}}$ then
\[ u^\pm(G \wedge H) = u^\pm(G) \cap u^\pm(H)\]
\[ u^\pm(G \vee H) = u^\pm(G) \cup u^\pm(H)\]
where $x \cup y$ is the greatest element of $S$ less than or equal to $x + y$, and $x \cap y$ is the minimum element
of $S$ greater than or equal to $x+ y - 1$.

If $G \in \mathcal{W}_{\{0,1\}}$ is even-tempered, then
\[ \lout(G) = 0 \iff u^-(G) = 0\]
\[\rout(G) = 1 \iff u^+(G) = 1\]
while if $G$ is odd-tempered, then
\[ \lout(G) = 0 \iff u^+(G) \le \frac{1}{2} \]\[ \rout(G) = 1 \iff u^-(G) \ge \frac{1}{2}.\]
\end{theorem}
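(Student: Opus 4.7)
The plan is to package the theorem as a synthesis of results already in hand. The existence of unique $u^\pm(G) \in S$ satisfying the claimed representations of $\psi^\pm(G)$ follows from Theorem~\ref{boolean-theorem}, which exhibits the image of $\psi$ on even- and odd-tempered Boolean-valued games as two disjoint sets indexed by $S$; since $\int_1^{1/2}$ is strictly order-preserving (as a Norton multiplication by the positive game $\{1/2|\}$), the indexing $u^\pm$ is uniquely determined. Theorem~\ref{reduction-2} supplies $\psi^\pm(G) = \psi(G^\pm)$, so $u^\pm$ extends unambiguously to all of $\mathcal{W}_{\{0,1\}}$.

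For the isomorphism $\mathbf{W}_{\{0,1\}} \cong \{(x,y,i) : x \ge y \in S,\, i \in \mathbb{Z}/2\mathbb{Z}\}$, I would invoke Theorem~\ref{structure-small} with $S = \{0,1\}$ and use the identification of $\mathbf{I}^0_{\{0,1\}}$ with the poset $S$ via $G \mapsto u(G)$. This identification is the composition of the strictly order-preserving map $\psi$ of Corollary~\ref{reduction-1.5} with the inverse of the strictly order-preserving overheating $\int_1^{1/2}$, so it transports the order faithfully. The formulas for $\wedge$ and $\vee$ are then a direct restatement of Theorem~\ref{cup-cap-rule} through the definition of $\cap$ and $\cup$. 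All of this amounts to bookkeeping.

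The only substantive remaining content is the outcome characterization. The plan here is to chase definitions. The bulleted identities at the end of \S\ref{sec:psi} give $\lout(G) = \lout(G^-)$ and $\rout(G) = \rout(G^+)$ when $G$ is even, and $\lout(G) = \lout(G^+)$ and $\rout(G) = \rout(G^-)$ when $G$ is odd. Because $G^\pm \in \mathcal{J}$, Theorem~\ref{reduction-1}(b) and (c) then turn each outcome condition into a comparison of $\int_1^{1/2} u^\pm(G)$ (even case) or $* + \int_1^{1/2} u^\pm(G)$ (odd case) against $0$ or $1$ in $\mathbf{Pg}$. In the even case, strict order-preservation of $\int_1^{1/2}$ together with $\int_1^{1/2}(0) = 0$ and $\int_1^{1/2}(1) = 1$ collapses the comparisons $\int_1^{1/2} u \le 0$ and $\int_1^{1/2} u \ge 1$ to $u = 0$ and $u = 1$ in $S$, which are the only elements of $S$ attaining those bounds.

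The odd case will be the main obstacle, though only because it calls for a finite verification rather than a conceptual argument. For each of the eight elements $x \in S$ I would compute $* + \int_1^{1/2} x$ in $\mathbf{Pg}$ and test it for the relations $\rhd 0$ (which gives $\lout = 1$) and $\lhd 1$ (which gives $\rout = 0$). Since the elements of $S$ appear in increasing partizan value with $\tfrac{1}{2}*$ as the unique element fuzzy to $\tfrac{1}{2}$, I expect the outcome flips to occur precisely at $x = \tfrac{1}{2}$ in the poset, yielding the stated conditions $\lout(G) = 0 \iff u^+(G) \le \tfrac{1}{2}$ and $\rout(G) = 1 \iff u^-(G) \ge \tfrac{1}{2}$.
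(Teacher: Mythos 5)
Your proposal is correct and follows essentially the same route as the paper, which likewise assembles the first three paragraphs from Theorems~\ref{boolean-theorem}, \ref{structure-small}, \ref{reduction-2} and \ref{cup-cap-rule}, and handles the outcome statement via Theorem~\ref{reduction-2} plus overheating facts or a direct check of the sixteen invertible games (indeed, by monotonicity only four of them need checking). One small correction to your final paragraph: the parenthetical thresholds are transposed --- by Theorem~\ref{reduction-1}(c) the test for $\lout(G)=1$ in the odd case is $\psi^+(G) \rhd 1$ (not $\rhd 0$, which only gives the vacuous $\lout(G)\ge 0$), and the test for $\rout(G)=0$ is $\psi^-(G) \lhd 0$ (not $\lhd 1$); with the correct thresholds the finite verification does come out exactly as you predict, with the flip occurring at $\tfrac{1}{2}$ and $\tfrac{1}{2}*$ excluded on the $u^+$ side.
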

\begin{proof}
All of this follows by piecing together previous results of this section, except for the final statement about outcomes.
This can be proven from Theorem~\ref{reduction-2} and some facts about overheating, or by directly checking the sixteen invertible games.
(In fact, by monotonicity one only needs to check it for the even-tempered games with $u$-values $\frac{1}{4}$ and $\frac{3}{4}$, and the odd-tempered
games with $u$-values $\frac{1}{2}$ and $\frac{1}{2}*$.)
\end{proof}

\subsection{The Number of S-valued Games}
Let $S$ be a finite set of integers.  By Corollary~\ref{same-size}, the number of $S$-valued games, up to equivalence, depends only on $|S|$.
If $S = \emptyset$, then there are no $S$-valued games.  If $S = \{0\}$, it is easy to see that the only games are $0$ and $\{0|0\}$
(for example, use Lemma~\ref{i-games-nice} and the fact that any $S$-valued game must be in $\mathcal{I}$).  Above we saw that the number
of $\{0,1\}$-valued games, up to equivalence, is exactly seventy.  For the remaining case, we have the following theorem:

\begin{theorem}
If $S \subseteq \mathbb{Z}$ has more than two elements, then $\mathbf{I}^0_S$ contains a subposet isomorphic to the all-small games.
In particular, there are infinitely many well-tempered $S$-valued games.
\end{theorem}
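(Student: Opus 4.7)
The plan is to exhibit an order-preserving injection of the poset of all-small partizan games (modulo equivalence) into $\mathbf{I}^0_S$ via the map $\phi_0$ of \S\ref{sec:psi}, thereby embedding an infinite poset and finishing both claims at once.

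First I would reduce to the case $S = \{-1, 0, 1\}$. For any inclusion $S' \subseteq S$ of subsets of $\mathbb{Z}$, the containment $\mathcal{I}^0_{S'} \subseteq \mathcal{I}^0_S$ descends to an order-preserving injection $\mathbf{I}^0_{S'} \hookrightarrow \mathbf{I}^0_S$, because $\approx$ and $\gtrsim$ are defined using all $\mathbb{Z}$-valued test games and so do not depend on the ambient value set. Combined with Corollary~\ref{same-size}, which lets us rename any three-element subset of $S$ to $\{-1, 0, 1\}$ via an order-preserving bijection, it suffices to embed the poset of all-small partizan games into $\mathbf{I}^0_{\{-1, 0, 1\}}$.

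Next, given an all-small partizan game $G$, replace $G$ by its canonical form. The canonical form of an all-small game is again all-small, and since all-small games are infinitesimal the only integer value such a game can equal is $0$. Hence every subposition of canonical $G$ is either literally $\{~|~\} = 0$ or has both Left and Right options. Consequently the recursive construction of $\phi_0(G)$ appeals only to the base cases $\phi_0(0) = 0$ and $\phi_1(0) = \{-1 \,|\, 1\}$, so every integer leaf of $\phi_0(G)$ lies in $\{-1, 0, 1\}$. That is, $\phi_0(G) \in \mathcal{W}_{\{-1, 0, 1\}}$, and by Theorem~\ref{reduction-3}, $\phi_0(G) \in \mathcal{J} \cap \mathcal{W}_{\{-1, 0, 1\}} \subseteq \mathcal{I}^0_{\{-1, 0, 1\}}$.

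Finally, the corollary following Theorem~\ref{reduction-3} identifies $\phi_0 : \mathbf{Pg} \to \mathbf{J}^0$ as an isomorphism of partially ordered groups satisfying $\phi_0(G) \gtrsim \phi_0(H) \iff G \ge H$. Restricting to the sub-poset of all-small games in $\mathbf{Pg}$ and composing with $\mathbf{I}^0_{\{-1,0,1\}} \hookrightarrow \mathbf{I}^0_S$ gives the desired order-preserving embedding. Since the all-small partizan games form an infinite poset, $\mathbf{I}^0_S$ (and hence $\mathbf{W}_S$) is infinite.

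The one step that merits care — and which I expect to be the main obstacle — is the verification that the canonical form of an all-small game uses only $0$ as an integer subposition. This is where the simplicity rule and the infinitesimal character of all-small games enter; everything else in the argument is a routine invocation of previously established results.
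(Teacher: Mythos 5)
Your proof is correct and follows essentially the same path as the paper: reduce to $S=\{-1,0,1\}$ (via Corollary~\ref{same-size}), apply $\phi_0$ to all-small games, verify $\phi_0(G)\in\mathcal{J}\cap\mathcal{W}_{\{-1,0,1\}}\subseteq\mathcal{I}^0_{\{-1,0,1\}}$, and invoke the corollary to Theorem~\ref{reduction-3} to see that $\phi_0$ is order-faithful. The detour through canonical forms is harmless but unnecessary: since ``all-small'' is already a property of the \emph{form} (every subposition has options for both players or neither), and since the base cases of $\phi_0$ and $\phi_1$ trigger on the \emph{value} equaling an integer (which for all-small subpositions can only be $0$), the paper's ``easy induction'' applies directly to any all-small form without reducing first.
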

\begin{proof}
We might as well take $S = \{-1,0,1\}$.  Let $G$ be any all-small game.  Then $\phi_0(G)$ and $\phi_1(G)$ are in $\mathcal{W}_{\{-1,0,1\}}$ by an easy-induction.
Since $\phi_0$ is faithful, having $\psi$ as its inverse, there is a complete copy of the partially ordered abelian group of all-small games
in $\mathbf{I}^0_{\{-1,0,1\}}$.
\end{proof}
In summary, we see that the size of $\mathbf{W}_S$ is given as follows:
\[ |\mathbf{W}_S| = \begin{cases}
0 & |S| = 0 \\
2 & |S| = 1 \\
70 & |S| = 2 \\
\infty & |S| \ge 3
\end{cases} \]
Since the group of all-small games contains a copy of $\mathbf{Pg}$ (via the embedding $G \mapsto G.\uparrow = \hat{G}$), it follows that the theory
of $S$-valued games is as complicated as the theory of short partizan games, for $|S| \ge 3$.  We can get at the additive structure of
the group of all-small games as follows:
\begin{definition}
Let $\star$ be the operation $\{-1,0,1\}^2 \to \{-1,0,1\}$ given by
\[ i \star j = \max(-1,\min(1,i+j)),\] as well as the extension of this
operation to $\{-1,0,1\}$-valued games.
\end{definition}
\begin{theorem}
If $G, H$ are all-small (partizan) games, then
\[ \phi_0(G+ H) \approx \phi_0(G) \star \phi_0(H)\]
\end{theorem}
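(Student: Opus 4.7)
The plan is to recognize the operation $\star$ as the composition of ordinary disjunctive addition with a clamping map, and then to exploit the fact that this clamping acts trivially on $\phi_0$ of an all-small game. Write $A = \phi_0(G)$ and $B = \phi_0(H)$, and let $r \colon \mathbb{Z} \to \{-1,0,1\}$ be the order-preserving function $r(x) = \max(-1,\min(1,x))$, which restricts to the identity on $\{-1,0,1\}$. The two-variable function defining $\star$ is precisely $r$ composed with integer addition, so fact (d) of \S\ref{basic-distort} yields the identity $A \star B = \tilde{r}(A + B)$.

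Next I would verify by an easy induction that $\phi_0(K) \in \mathcal{W}_{\{-1,0,1\}}$ whenever $K$ is all-small: the only integer to which an all-small partizan game can be equivalent is $0$, so the recursive definition of $\phi_0$ and $\phi_1$ only ever hits the integer case at $0$, yielding either the scoring integer $0$ or the scoring game $\{-1|1\}$. Applying this observation to the all-small game $G + H$, the game $\phi_0(G+H)$ is $\{-1,0,1\}$-valued. Since $\phi_0(G), \phi_0(H) \in \mathcal{J}$ by Theorem~\ref{reduction-3}, Corollary~\ref{reduction-1.5} gives $A + B \approx \phi_0(G+H)$.

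Putting the ingredients together, because $r$ is the identity on $\{-1,0,1\}$ we have $\tilde{r}(\phi_0(G+H)) = \phi_0(G+H)$, and Theorem~\ref{generalized-compatibility} guarantees that the order-preserving extension $\tilde{r}$ respects the equivalence $\approx$. Therefore
\[ A \star B = \tilde{r}(A+B) \approx \tilde{r}(\phi_0(G+H)) = \phi_0(G+H) \approx A + B \approx \phi_0(G+H), \]
and unwinding the definitions of $A$ and $B$ (together with Corollary~\ref{reduction-1.5} applied once more to $\phi_0(G+H) \approx \phi_0(G) + \phi_0(H)$) delivers $\phi_0(G+H) \approx \phi_0(G) \star \phi_0(H)$, which is what we wanted.

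The proof is therefore almost pure bookkeeping once the right functorial identity $A \star B = \tilde{r}(A+B)$ is spotted; the only content is the inductive lemma that $\phi_0$ carries all-small games into $\mathcal{W}_{\{-1,0,1\}}$, which is where the all-small hypothesis gets used. The main thing to be careful about is keeping straight the three different ``sum'' operations in play (the partizan sum in $\mathbf{Pg}$, the scoring disjunctive sum in $\mathcal{W}_{\mathbb{Z}}$, and the clamped sum $\star$ on $\mathcal{W}_{\{-1,0,1\}}$) and noticing that $\tilde{r}$ intertwines the last two via property (d).
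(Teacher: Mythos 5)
Your argument is correct, and it is essentially the approach the paper hints at by saying the proof is ``similar to the proof of Theorem~\ref{cup-cap-rule}'': the common key idea is a clamping function that is the identity on the relevant value set and intertwines disjunctive addition with the modified sum. The only blemish is the citation: the fact $\phi_0(G+H) \approx \phi_0(G) + \phi_0(H)$ comes from the (unnumbered) corollary immediately following Theorem~\ref{reduction-3}, not from Corollary~\ref{reduction-1.5} as written (though it can be deduced from that corollary plus the injectivity of $\psi$ on $\mathbf{J}^0$); the surrounding steps — the identity $A \star B = \tilde{r}(A+B)$ via \S\ref{basic-distort}(d), the induction giving $\phi_0(K) \in \mathcal{W}_{\{-1,0,1\}}$ for all-small $K$ (already stated in the theorem just before this one), the compatibility of $\tilde{r}$ with $\approx$ from Theorem~\ref{generalized-compatibility}, and $\tilde{r}$ acting as the identity on $\{-1,0,1\}$-valued games — are all applied correctly.
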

We leave the proof as an exercise to the reader - it is similar to the proof of Theorem~\ref{cup-cap-rule}.

%

\section{Examples}\label{sec:Examples}
Very few naturally occurring scoring games are well-tempered.  The games \textsc{Triangles and Squares}, \textsc{Mercenary Clobber}, \textsc{Celestial Clobber},
 and \textsc{Rebel Hex} were invented by the author to give examples of scoring games.  The only pre-existing scoring games considered here
are \textsc{Hex}, \textsc{Misere Hex}, and \textsc{To Knot or Not to Knot}.

\subsection{Triangles and Squares}
The joke game of \textsc{Brussel Sprouts}
is a variant of Conway and Paterson's \textsc{Sprouts}, played as follows: initially the board is full of small
crosses.  Players take turns connecting the loose ends of the crosses.  Every time you connect two lose ends, you add another cross in the middle of your new
segment.  Play continues until someone is unable to move; this person loses.  A sample game is shown in Figure~\ref{brussel-sprouts-game}.

Every move in Brussel Sprouts preserves the total number of loose ends, but decreases
\[ \text{ the number of connected components} - \text{ the number of faces }\]
by one.  At the end of the game, each face contains exactly one loose end, and there is one connected component.
As there are initially $4n$ loose ends,
$n$ connected components, and one face, the total length of the game is $5n-2$ moves.  The game is played by the normal play rule,
so if $n$ is odd, the first player wins, and if $n$ is even, the second player wins, \emph{regardless of how the players play}.  In particular, there is
no strategy involved - this is the joke of Brussel Sprouts.  This makes Brussel Sprouts frivolous as a normal-play partizan game, but well-suited for our purposes.
Further information on Sprouts and Brussel Sprouts can be found in Chapter 17 of \cite{WinWays}.

\begin{figure}
\centering
\def \svgwidth{4in}
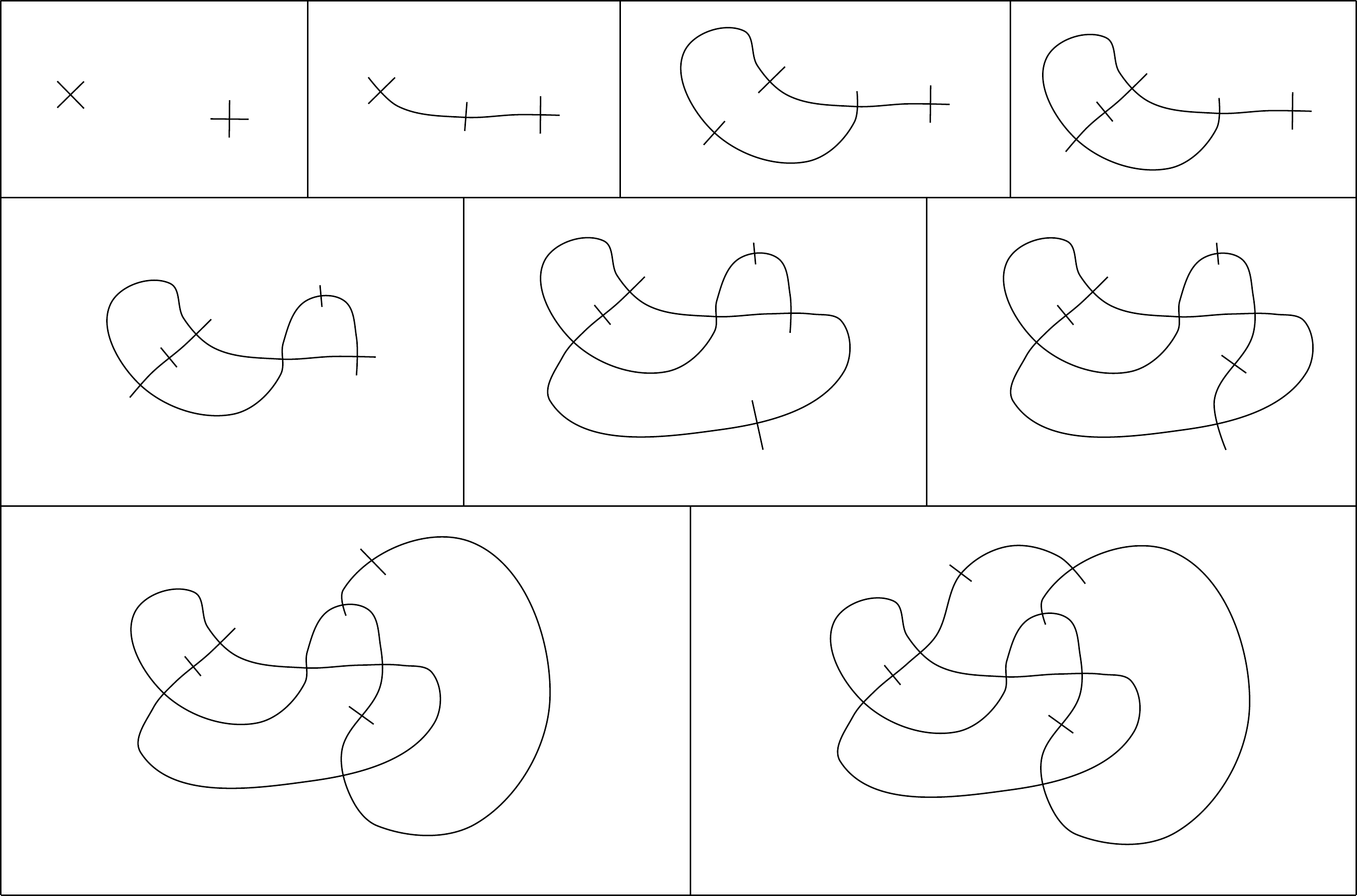 
\caption{A sample game of \textsc{Brussel Sprouts} (or of \textsc{Triangles and Squares}).  The initial position is in the top left, and
the successive positions are shown from left to right, top to bottom.  Because there were $n = 2$ crosses initially, the game lasts
$5n - 2 = 8$ turns.}
\label{brussel-sprouts-game}
\end{figure}

We define \textsc{Triangles and Squares} to be played exactly the same as \textsc{Brussel Sprouts} except that at the end, a score is assigned as follows:
Left gets one point for every ``triangLe'' (3-sided region), and loses one point for every ``squaRe'' (4-sided region).  Here, we say that a region
has $n$ sides if there are $n$ corners around its perimeter, not counting the two by the unique loose end.  We include the exterior unbounded
region in our count, as if the game were played on a sphere.  An example endgame is shown
in Figure~\ref{sample-side-count}.  Note that we have introduced an asymmetry
between Left and Right, so the game is no longer impartial.

\begin{figure}
\centering
\def \svgwidth{2in}
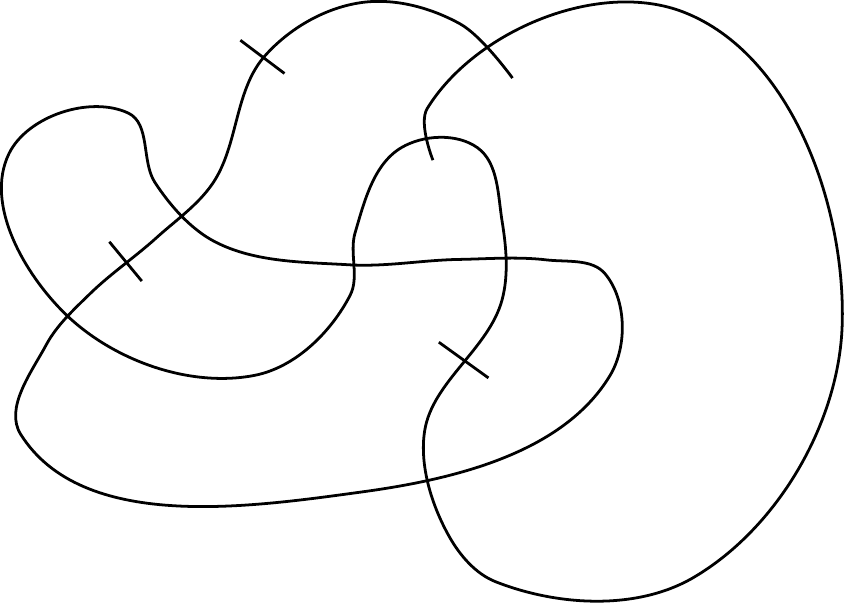 
\caption{The number of ``sides'' of each region in the final position of Figure~\ref{brussel-sprouts-game}.  Since there are two ``triangles''
and three ``squares,'' the final score is $2 - 3$ points for Left, or $3 - 2$ points for Right.}
\label{sample-side-count}
\end{figure}

Positions in Triangles and Squares naturally decompose as sums of smaller positions, making the
game amenable to our analysis.  Indeed, each cell (face) of a position acts independently of the others.
Figure~\ref{triangles-squares-dictionary} shows a few small positions and their values.  Using these values
of individuals cells, we can calculate the value of a compound position, as demonstrated in Figure~\ref{sample-calculation-t-s}.
Rather complicated
values seem to occur in this game, such as the position in Figure~\ref{octagon-t-s}.

\begin{figure}
\centering
\def \svgwidth{4in}
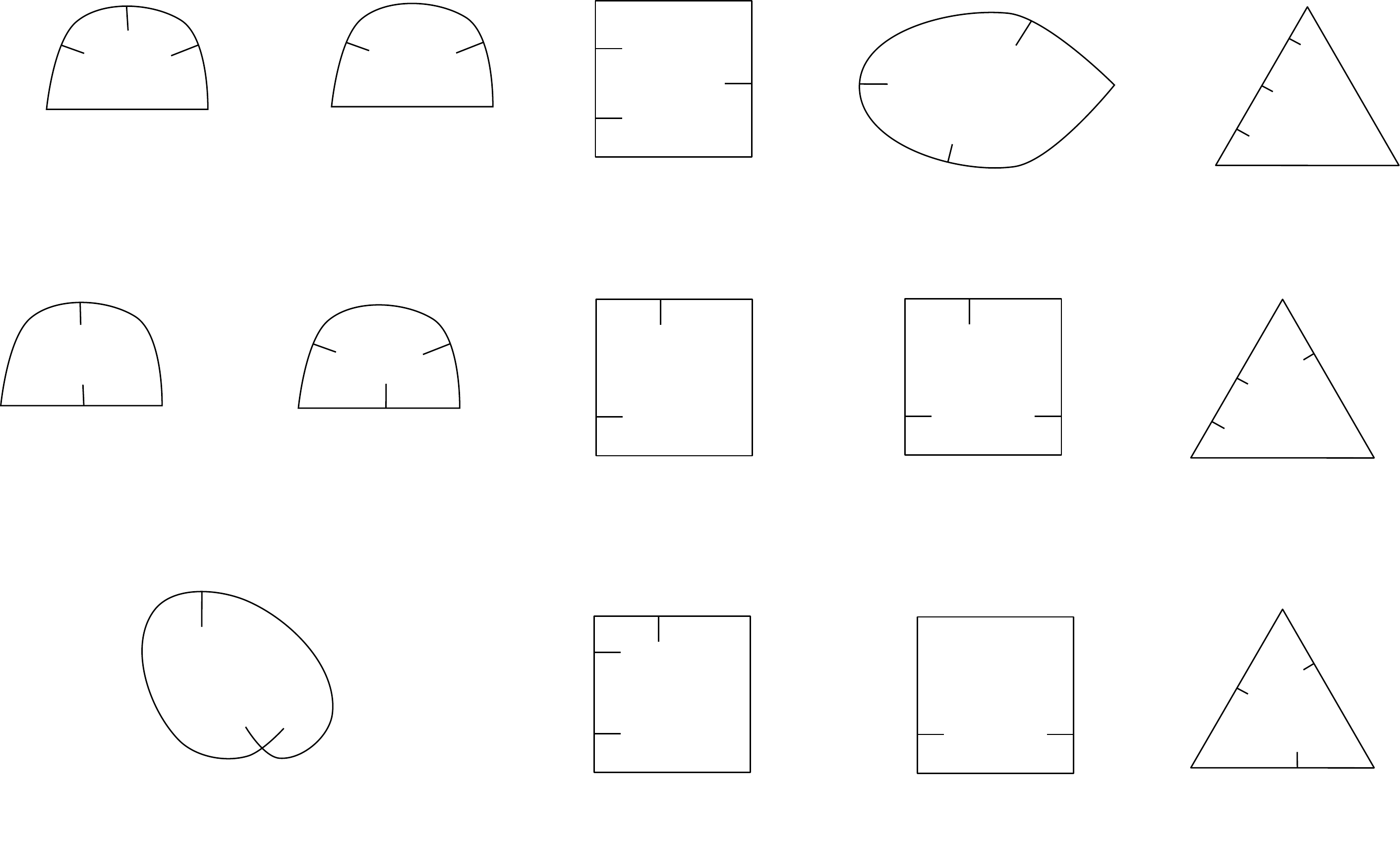 
\caption{The values of some small positions of \textsc{Triangles and Squares}, from Left's point of view.}
\label{triangles-squares-dictionary}
\end{figure}

\begin{figure}
\centering
\def \svgwidth{1in}
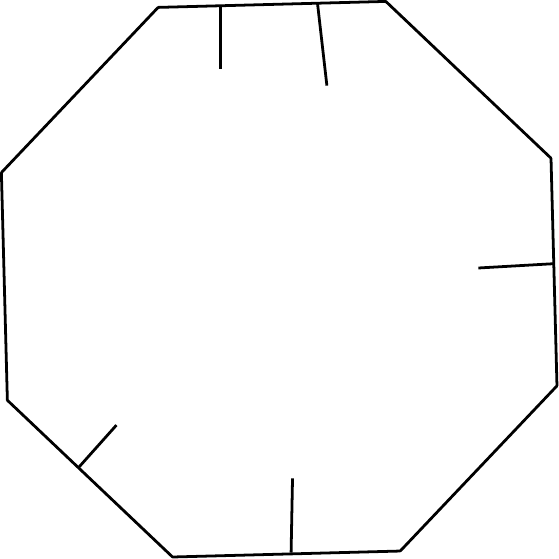 
\caption{This octagon has value $\{\{2|0\}, \{2||1*| -1*\} |
 \{0| -3\}, \{1*| -1*|| -3\}\}$
}
\label{octagon-t-s}
\end{figure}

\begin{figure}
\centering
\def \svgwidth{4in}
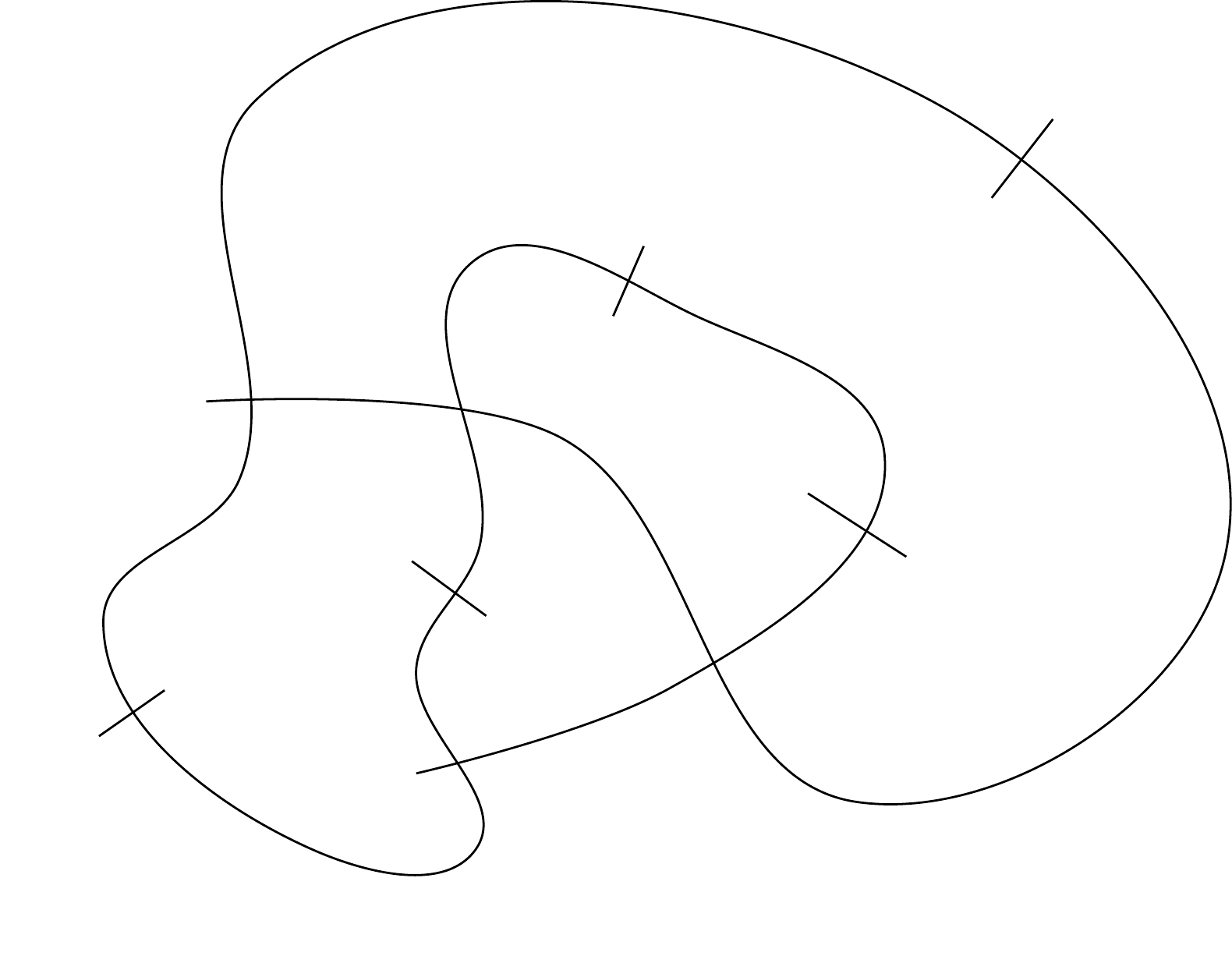 
\caption{This position can be decomposed as the sum of its five regions (faces).  Its value is the sum of the values of the five regions.}
\label{sample-calculation-t-s}
\end{figure}

\subsection{Well-tempered Scored Clobber}
\textsc{Clobber}, first presented in \cite{Clobber},
is an all-small partizan game played between Black and White using black and white checkers on a square grid.  On your turn,
you can move one of your pieces onto an orthogonally adjacent piece of the enemy color, which is removed from the game.
Eventually, no pieces of opposite colors are adjacent, at which point neither player can move and the game ends.
The last player able to move is the winner.  A common starting position is a board full of alternating black and white checkers.

One way to make this into a scoring game would be to assign scores for captured pieces.  This fails to make a \emph{well-tempered} scoring game,
so we do something different.  Say that a checker of color $C$ is \emph{isolated} if it belongs to a connected group of checkers of color $C$, none
of which is adjacent to a checker of the opposite color.  In other words, a checker is isolated if it has no chance of being taken by an enemy checker.

We then add a new \emph{collecting} move, in which you can collect an isolated checker of either color, removing it from the board.  With this rule, the game ends
when no checkers remain on the board.  Every move (clobbering or collecting) decreases the total number of checkers by exactly one.
If the initial position has $n$ checkers,
the game will last exactly $n$ moves.  So this is a well-tempered game.

In \textsc{Mercenary Clobber}, you get one point for collecting an enemy checker, and zero points for collecting one of your own.  Some example
positions and their values are shown in Figure~\ref{mercenary-clobber-dictionary}

\begin{figure}
\centering
\def \svgwidth{4in}
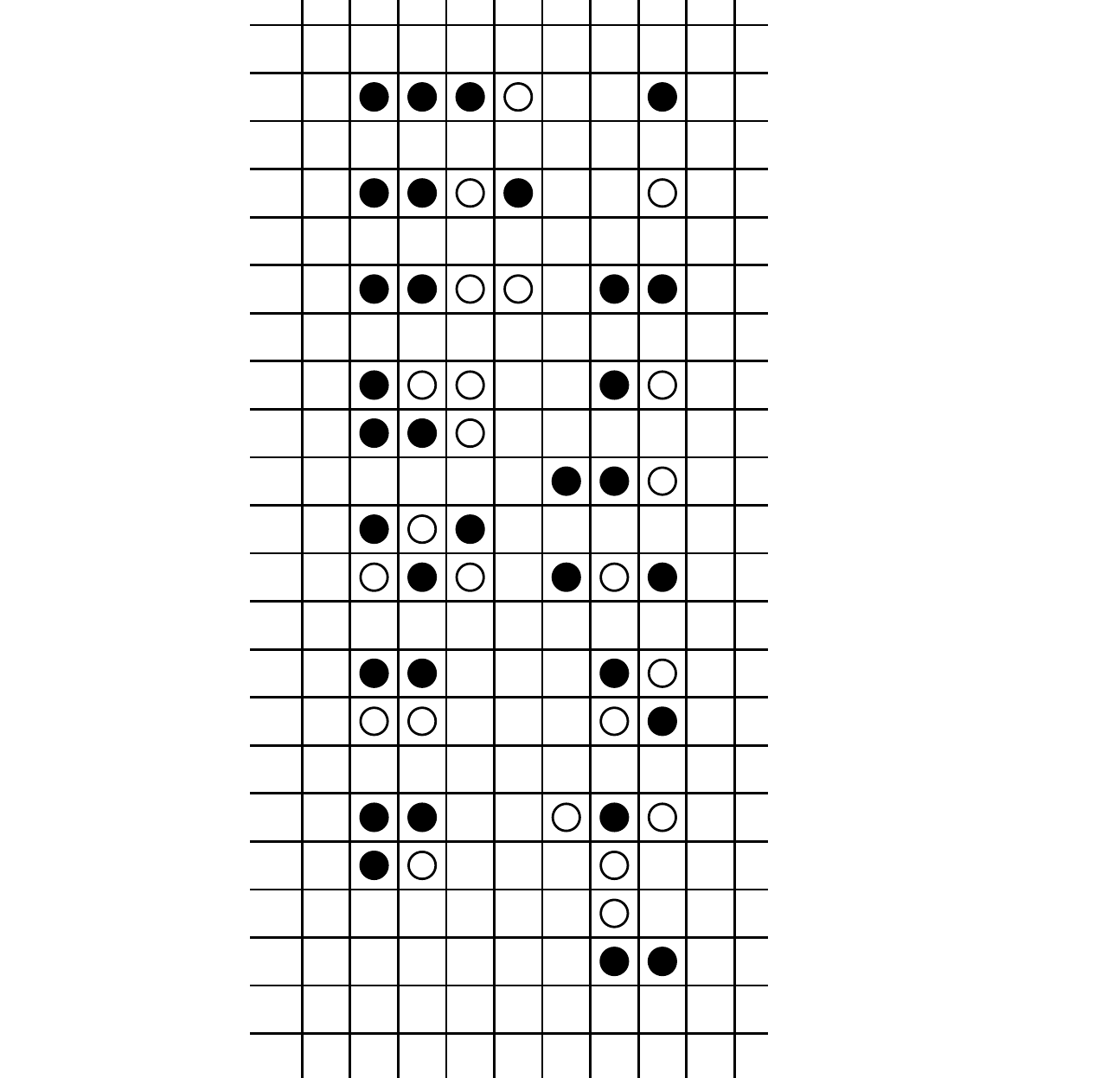 
\caption{Sample positions of \textsc{Mercenary Clobber}, in which you get one point for collecting an enemy piece and zero points for collecting
one of your own pieces.  Scores are from Left = Black's point of view.}
\label{mercenary-clobber-dictionary}
\end{figure}

In \textsc{Celestial Clobber}, the two players are GoLd and EaRth (Left and Right).  Golden pieces are worth 1 point for whoever collects them,
and earthen pieces are worth zero points.  Some example positions and their values are shown in Figure~\ref{penny-farthing-dictionary}.

\begin{figure}
\centering
\def \svgwidth{4in}
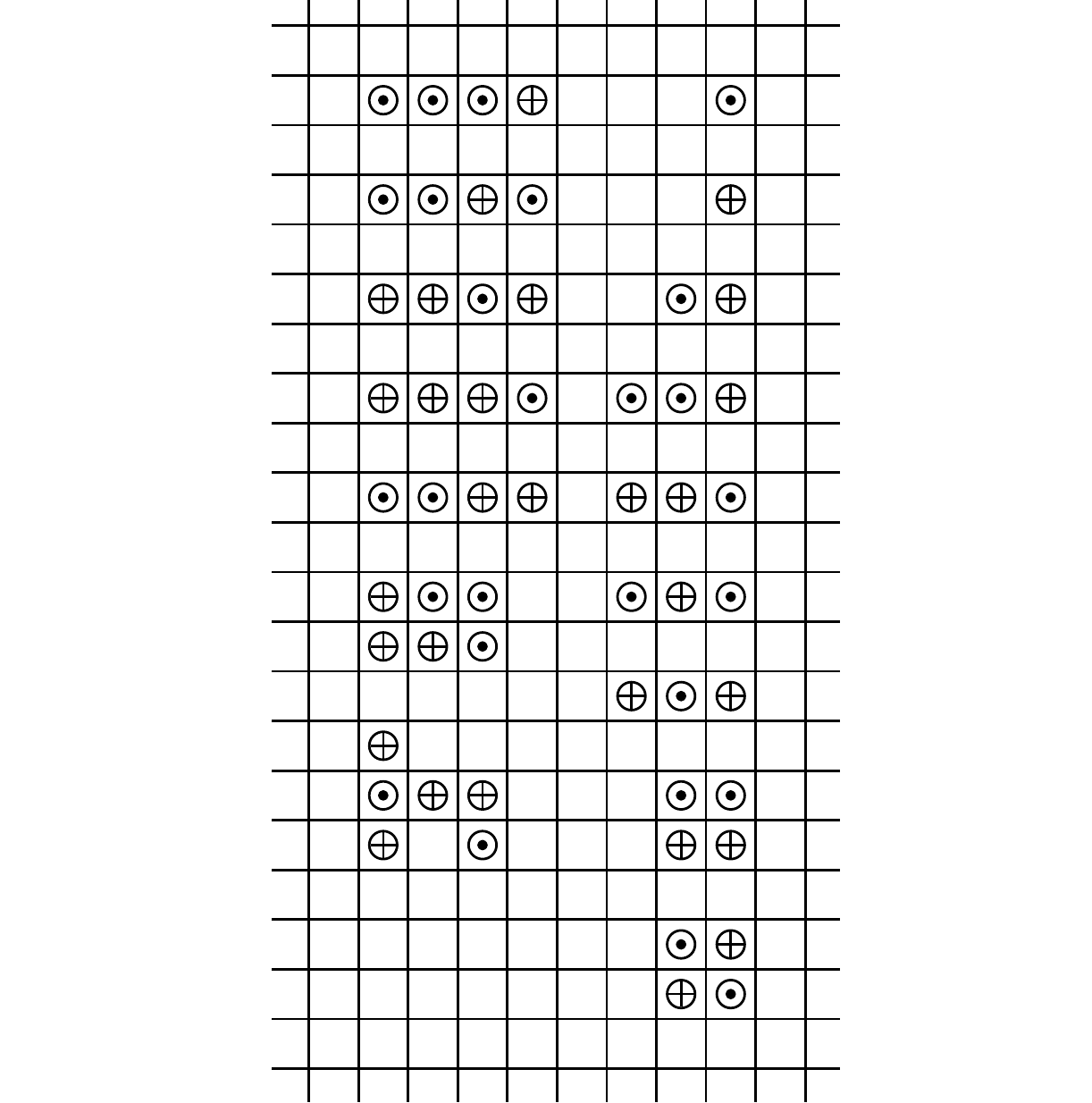 
\caption{Sample positions of \textsc{Celestial Clobber}, in which golden pieces are worth 1 point and earthen pieces are worth none.
Here, goLden pieces are denoted $\odot$ while eaRthen pieces are denoted $\oplus$.  Scores are from Left = Gold's point of view}
\label{penny-farthing-dictionary}
\end{figure}

\subsection{To Knot or Not to Knot}
The game \textsc{To Knot or Not to Knot}, introduced in~\cite{KnotGames}, is a game played using knot pseudodiagrams.  In knot theory, it
is common to represent three-dimensional knots with two-dimensional \emph{knot diagrams}, such as those in Figure~\ref{fig:knot-diagrams}.
Each crossing graphically expresses which strand is on top.  In a \emph{knot pseudodiagram}, ambiguous crossings are allowed, in which it is unclear
which strand is on top.  Figure~\ref{fig:pseudodiagrams} shows two examples.

\begin{figure}
\centering
\subfloat[knot diagrams]{\label{fig:knot-diagrams}
\def \svgwidth{2in}
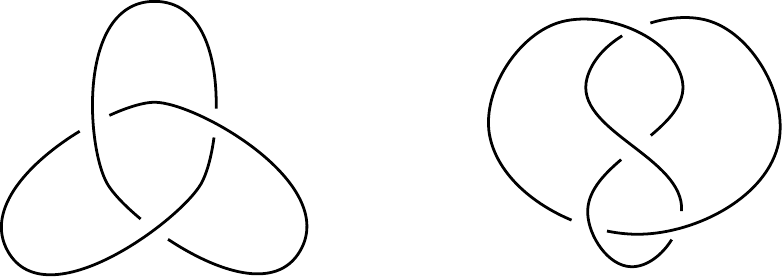}
\subfloat[pseudodiagrams]{\label{fig:pseudodiagrams}
\def \svgwidth{2in}
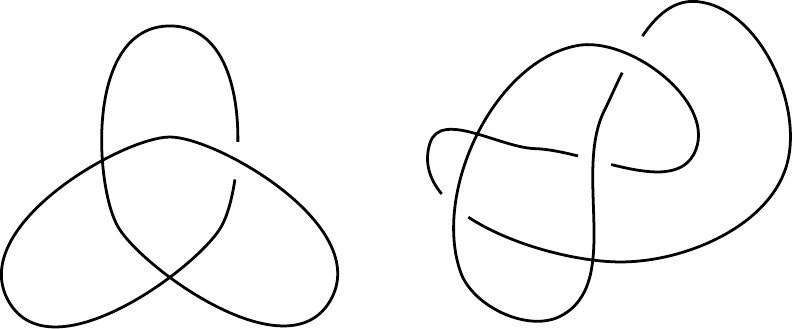}
\caption{Knot diagrams and pseudodiagrams.  In \ref{fig:pseudodiagrams}, the intersections marked as $*$ are ambiguous, unresolved crossings,
where we do not specify which strand is on top.}
\label{knot-diagrams-and-pseudodiagrams}
\end{figure}

Knot Pseudodiagrams were introduced by Hanaki~\cite{Hanaki}.
The motivation for studying pseudodiagrams comes from fuzzy electron microscopy images of DNA strands.
The motivation for the game of \textsc{To Knot or Not to Knot} is less clear:

Two players, \emph{King Lear} and \emph{Ursula} (also known as \emph{Knotter} and \emph{Unknotter}, or Left and Right)
start with a knot pseudodiagram in which every crossing is unresolved.  Such a pseudodiagram is called a \emph{knot shadow}.
They take turns alternately resolving an unresolved crossing.  (This game is best played on a chalkboard.)  Eventually, every crossing is resolved,
yielding a genuine knot diagram.  The Unknotter wins if the knot is topologically equivalent to the unknot, and the Knotter wins otherwise.
A sample game is shown in Figure~\ref{tkontk-game}.

\begin{figure}
\centering
\def \svgwidth{4in}
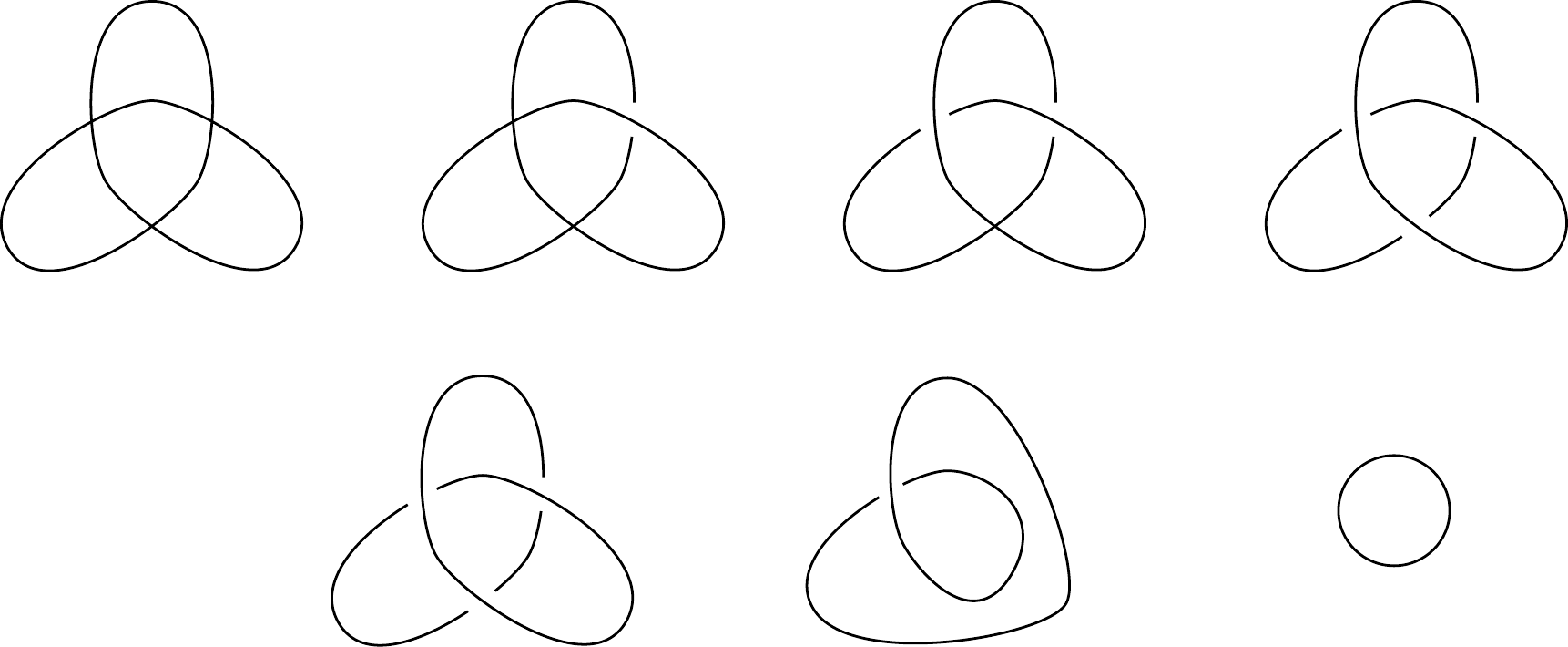
\caption{A sample game of \textsc{To Knot or Not to Knot}.  The top row shows the actual sequence of play,
which takes three moves.  The bottom row shows why the Unknotter has won.}
\label{tkontk-game}
\end{figure}

The connection with combinatorial game theory comes from positions such as the one shown in
Figure~\ref{tkontk-decomposition}, which decompose as connected sum of smaller pseudodiagrams.
From knot theory, we know that a connected sum of knots will be unknotted if and only if every summand is unknotted.\footnote{See pages 99-104 of
\cite{KnotBook} for an outline of a proof of this fact.}  Thus the Knotter wins a connected
sum of positions by winning at least one of the summands.

\begin{figure}
\centering
\def \svgwidth{4in}
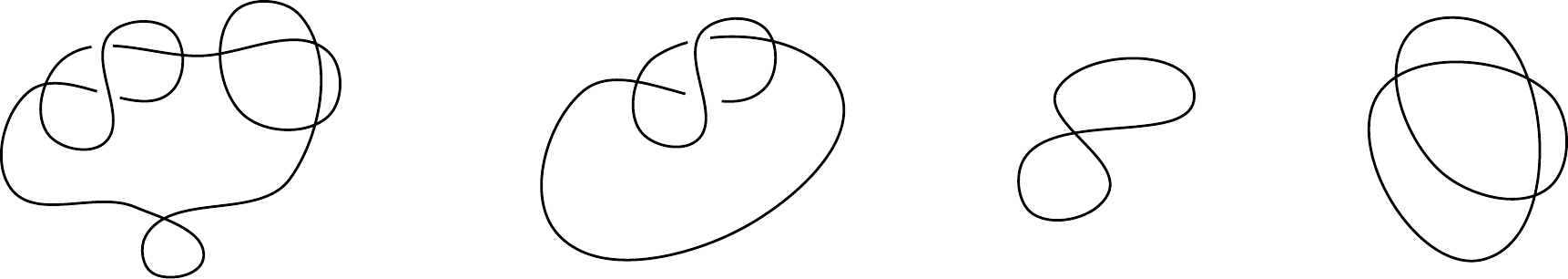
\caption{The position on the left decomposes as a connected sum of the three positions on the right.}
\label{tkontk-decomposition}
\end{figure}

This shows that a connected sum of positions is strategically the same as the disjunctive \textsc{or} (the operation $\vee$ of \S \ref{sec:boolean}).
Since the Knotter is Left and the
Unknotter is Right, we assign a value of $1$ to a Knotter victory and a value of $0$ to an Unknotter victory.  With these conventions,
Figure~\ref{tkontk-dictionary} shows the values of a few small positions.

\begin{figure}
\centering
\def \svgwidth{4in}
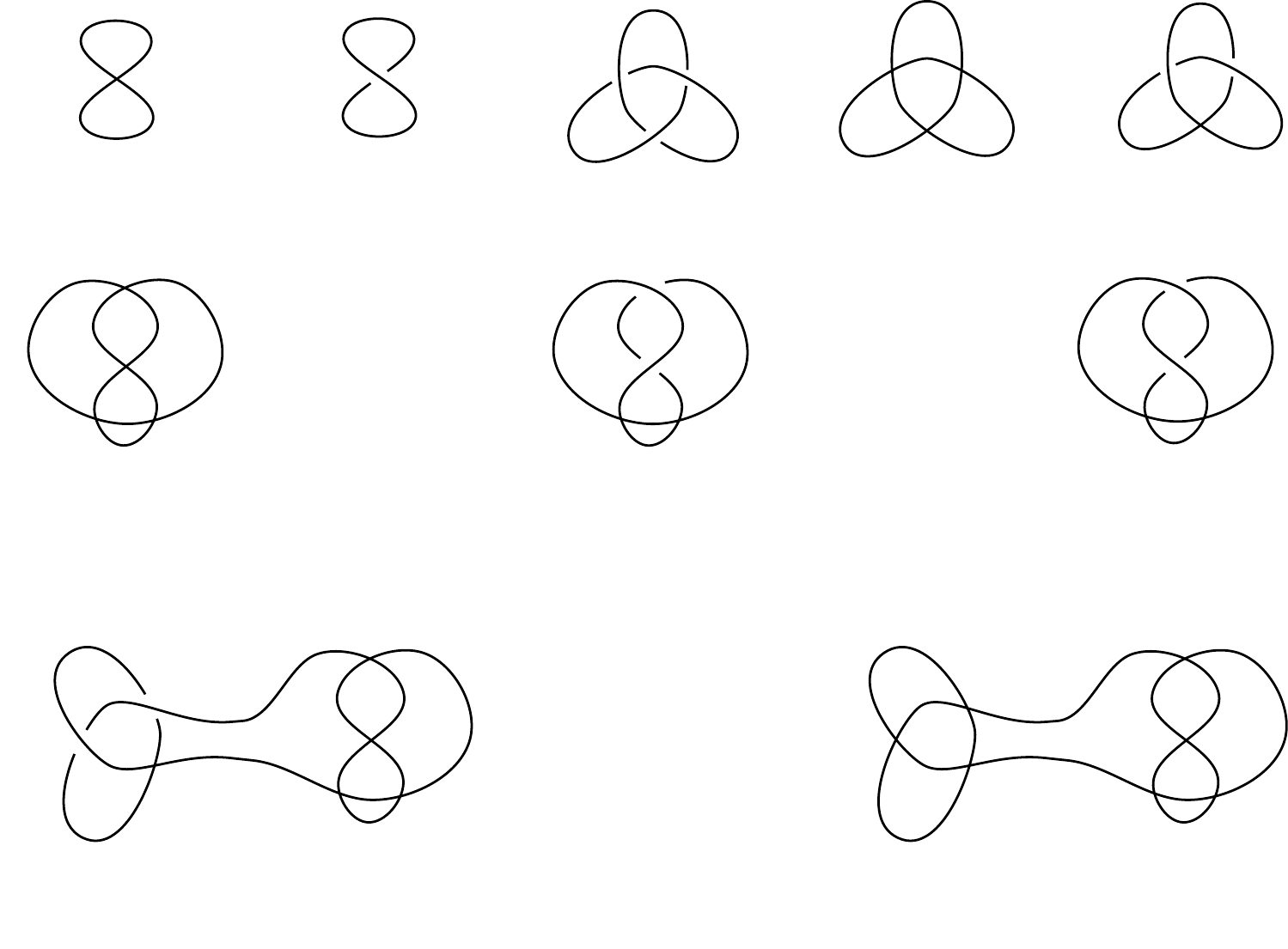
\caption{The values of a few positions of \textsc{To Knot or Not to Knot}, from the Knotter (= Left)'s point of view.}
\label{tkontk-dictionary}
\end{figure}

One difficulty with \textsc{To Knot or Not to Knot} is the problem of determining which player has won at the game's end!  Determining
whether two knots are equivalent is no easy problem.
However, there is a certain class of knot diagrams which has an efficient test for the unknot.  These are the \emph{rational knots}.
A \emph{rational tangle} is a tangle (knot with loose strands) built up from the initial tangles of Figure~\ref{fig:base-tangles} by the operations of adding
twists to the bottom or right sides, as in Figure~\ref{fig:tangle-construction}.
\footnote{It turns out that adding twists on the left or top is equivalent to adding twists
on the right or bottom.}  A rational knot is one
obtained from a rational tangle by closing up the top strands and the bottom strands, as in Figure~\ref{fig:tangle-closure}.

\begin{figure}
\centering
\subfloat[The base tangles]{\label{fig:base-tangles}
\def \svgwidth{1in}
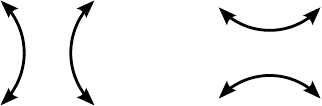}
\subfloat[Construction of new tangles]{\label{fig:tangle-construction}
\def \svgwidth{2in}
\small
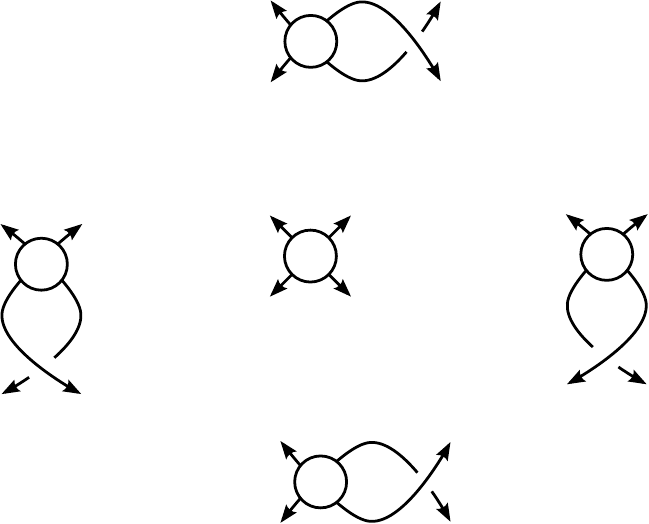}
\subfloat[Turning a tangle into a knot]{\label{fig:tangle-closure}
\def \svgwidth{1in}
\small
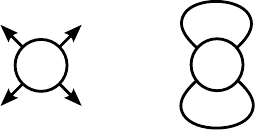}
\caption{The construction of rational tangles.  The rational tangles are the smallest class of tangles containing
the two tangles of \ref{fig:base-tangles} and closed under the four operations of \ref{fig:tangle-construction}.  A rational knot
is one obtained from a rational tangle by closing off a tangle, as in \ref{fig:tangle-closure}.  See \ref{tangle-example} for an example.}
\label{rational-tangles}
\end{figure}

\begin{figure}
\centering
\subfloat{\label{fig:rational-tangle-example-a}
\def \svgwidth{4in}
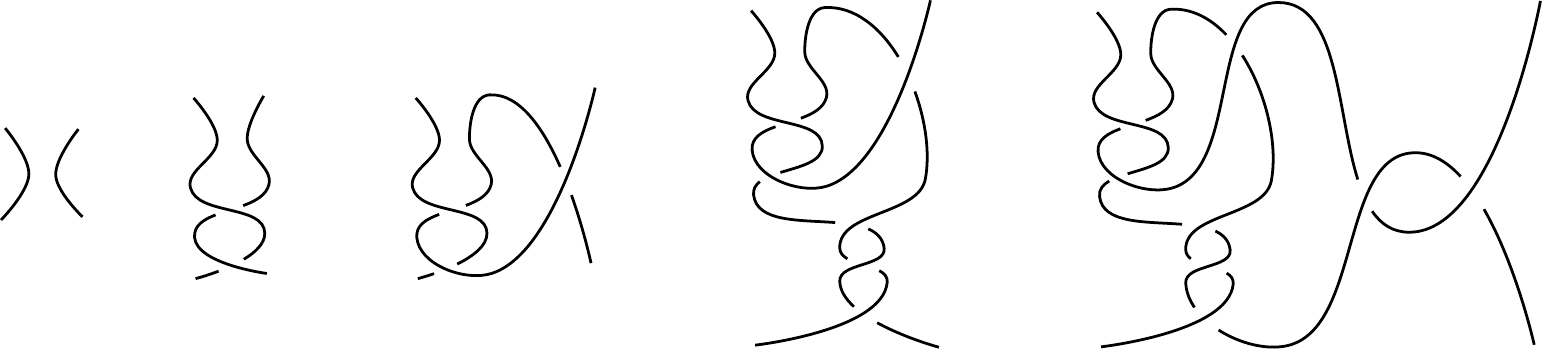} \\
\subfloat{\label{fig:rational-tangle-example-b}
\def \svgwidth{2in}
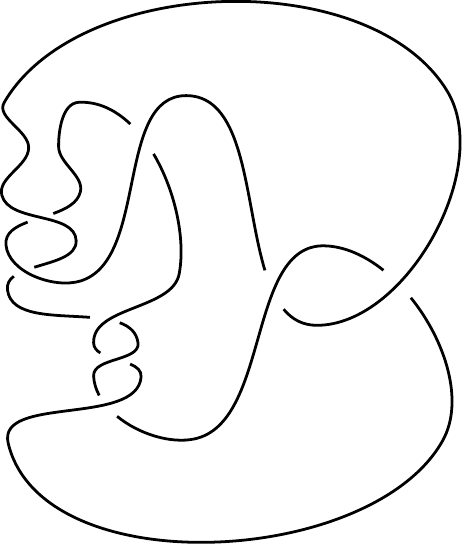}
\caption{An example construction of a rational knot.}
\label{tangle-example}
\end{figure}

Figure~\ref{fig:rational-tangle-example-a} shows the construction of a typical rational tangle, and Figure~\ref{fig:rational-tangle-example-b} shows
the resulting knot.
There is a simple way to classify rational knots using continued fractions; see \cite{KnotBook} or \cite{Conway1970} for details.  Consequently,
\emph{rational knot shadows} like those shown in Figure~\ref{rational-shadows}
become playable games\footnote{In the sense that at the game's end, it is algorithmically possible to identify the winner.}:

\begin{figure}
\centering
\def \svgwidth{4in}
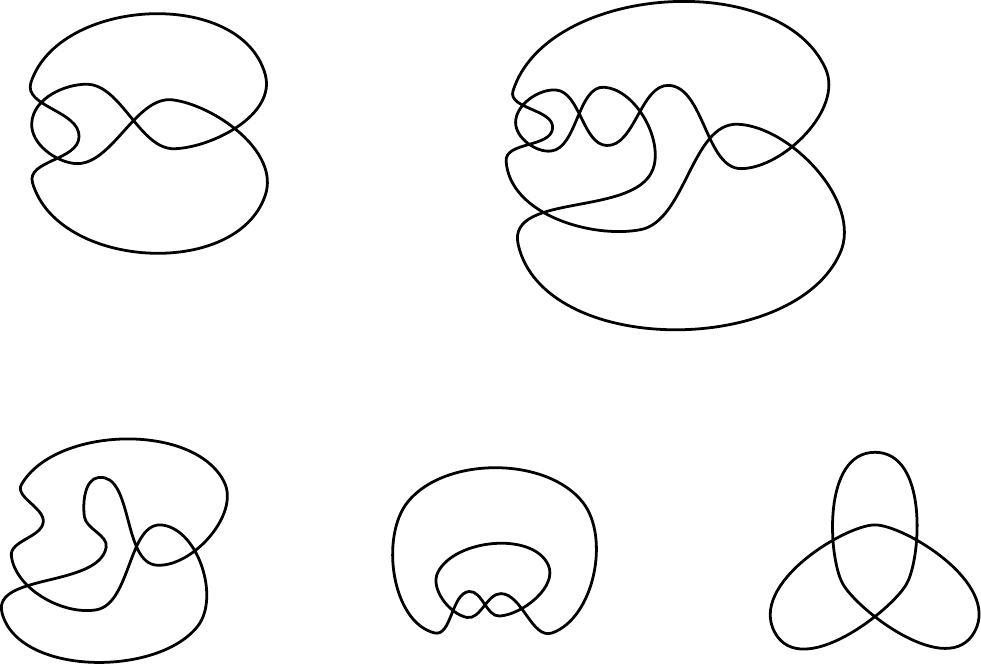
\caption{Rational shadows, on which we can play \emph{TKONTK}.}
\label{rational-shadows}
\end{figure}

It turns out that there is a general rule for determining the value of a rational shadow.\footnote{Let $\left[n_1,n_2,\ldots,n_m\right]$ denote the rational knot shadow
obtained by adding $n_1$ vertical twists, $n_2$ horizontal twists, $n_3$ vertical twists, and so on.  Then we have some reductions for simplifying
the game:
\[ \left[0,n+1,n_2,\ldots \right] = * + \left[0,n,n_2,\ldots\right] \]
\[ \left[\ldots,n_{m-2},n+1,0\right] = * + \left[\ldots,n_{m-2},n,0\right]\]
\[ \left[0,0,n_2,\ldots \right] = \left[ n_2, \ldots \right] \]
\[ \left[\ldots, n_{m-2},0, 0\right] = \left[ \ldots, n_{m-2} \right] \]
\[ \left[\ldots, n_{k-1}, 0, n_{k+1}, \ldots\right] = \left[\ldots, n_{k-1} + n_{k+1}, \ldots \right]\]
\[ \left[1,n_1,\ldots\right] = \left[1+n_1,\ldots\right]\]
\[ \left[\ldots,n_{m-1},1\right] = \left[\ldots,n_{m-1} + 1\right]\]
After applying these rules as much as possible, one reaches a minimal form $\left[n_1,n_2,\ldots,n_m\right]$.  Then if
all of the $n_k$ for $1 < k < m$ are even, and $n_1 + n_m$ is odd, then this position has value $*$.  Otherwise, it has value $1\&0$
or $(1+*)\&*$ depending on whether $n_1 + n_2 + \cdots + n_m$ is even or odd.  Note that this rule determines the value of rational \emph{shadows},
and does not apply to arbitrary rational \emph{pseudodiagrams}.
See \cite{Me} for further information.}

The only $u$-values that I have seen in \textsc{To Knot or Not to Knot} are $0, 1$ and $\frac{1}{2}*$.
As noted above, \textsc{To Knot or Not to Knot} the motivation for considering well-tempered scoring games.
The fact that $5$ of the $8$ possible $u$-values are missing is therefore disappointing.

\textsc{To Knot or Not to Knot} is an example of a Boolean-valued well-tempered scoring game that can be played with any boolean function
$f : \{0,1\}^n \to \{0,1\}$.  Thinking of $f$ as a black box with $n$ inputs and one output, we let the two players Left and Right take turns
setting the values of the inputs to $f$.  After $n$ moves, the inputs are all determined, and then the output of $f$ specifies the winner.
There is no stipulation that Left can only set inputs to 1 or that Right can only set inputs to 0, or that the players work through the inputs
in any particular order.  We call this the \textsc{Input-setting Game} for $f$.
\textsc{To Knot or Not to Knot} is an instance of this game where $f$ is the function which takes the crossing information and outputs
a bit indicating whether the final knot is knotted.

The scarcity of values occurring in \textsc{To Knot or Not to Knot} led me to initially conjecture that all input-setting games can only
involve the $u$-values $0, 1$, and $\frac{1}{2}*$.  However, it turns out that every Boolean-valued game is equivalent to an input-setting game
for some boolean function $f$.\footnote{
Consider Bill Taylor and Dan Hoey's game of \textsc{Projective Hex}~\cite{ProjectiveHex}, played on the faces of a dodecahedron or
the vertices of an icosahedron.  The same strategy-stealing argument for \textsc{Hex} shows that dodecahedral \textsc{Projective Hex} is a first-player
win.  By symmetry, there are no bad opening moves.  We can view this game as an input-setting game for some six-input binary function $f(x_1,\ldots,x_6)$.
Let $g(x_1,\ldots,x_7) = f(x_1,\ldots,x_6) \oplus x_7$, where $\oplus$ denotes exclusive-or.
Then the input-setting game of $g$ is a second-player win.  (This is not difficult
to show, given what we just said about dodecahedral \textsc{Projective Hex}.)  By examining Theorem~\ref{boolean-summary}, one can verify that
any odd-tempered Boolean-valued game that is a second-player win must have $u^+ = u^- = 1/2$.  This establishes the existence of input-setting
games with $1/2$ as a $u$-value.  Combining this game with the positions appearing in Figure~\ref{tkontk-dictionary} via $\vee$ and $\wedge$,
one can in fact get all 35 pairs of possible $u$-values, without leaving the realm of boolean input-setting games.}
This leads me to suspect that the same holds
within \textsc{To Knot or Not to Knot}, since there are no obvious restrictions on which boolean functions can be realized by a pseudodiagram.

\subsection{Hex, Misere Hex, Rebel Hex}
\textsc{Hex} is a well-known game due to Piet Hein and John Nash.\footnote{A good source on Hex is Browne's book~\cite{HexBook}.}
Two players, Black and White, alternately
place pieces on the vertices of the board shown in Figure~\ref{fig:hex-board}.
Black wins by forming an unbroken chain of black pieces connecting the left and right sides (as in Figure~\ref{fig:hex-win}),
while White wins by forming a chain
of white pieces connecting the top and bottom sides.  It turns out that the board cannot fill up without one player winning.\footnote{This fact is
related to the Brouwer fixed-point theorem.  See Gale's article~\cite{GaleBrouwerHex}, for example.}  Therefore, ties are impossible.

\begin{figure}
\centering
\subfloat[an $11 \times 11$ Hex board]{\label{fig:hex-board}
\def \svgwidth{2in}
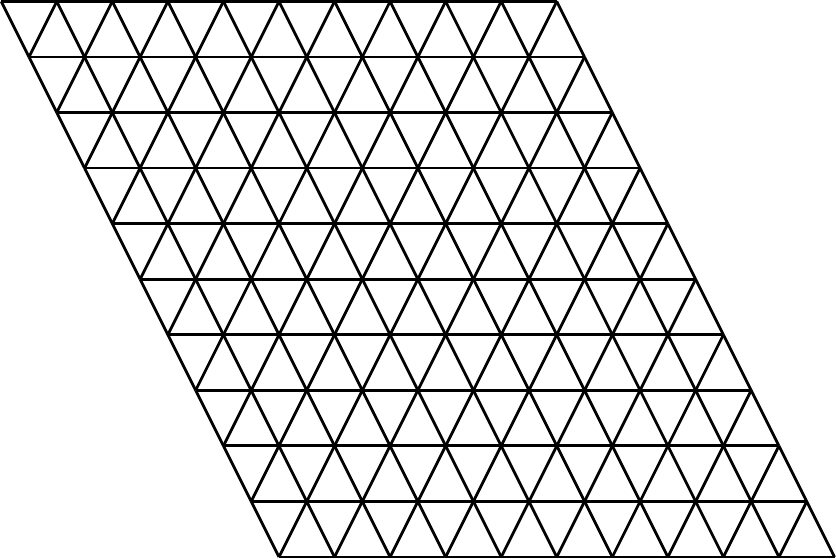}
\subfloat[a winning chain for Black]{\label{fig:hex-win}
\def \svgwidth{2in}
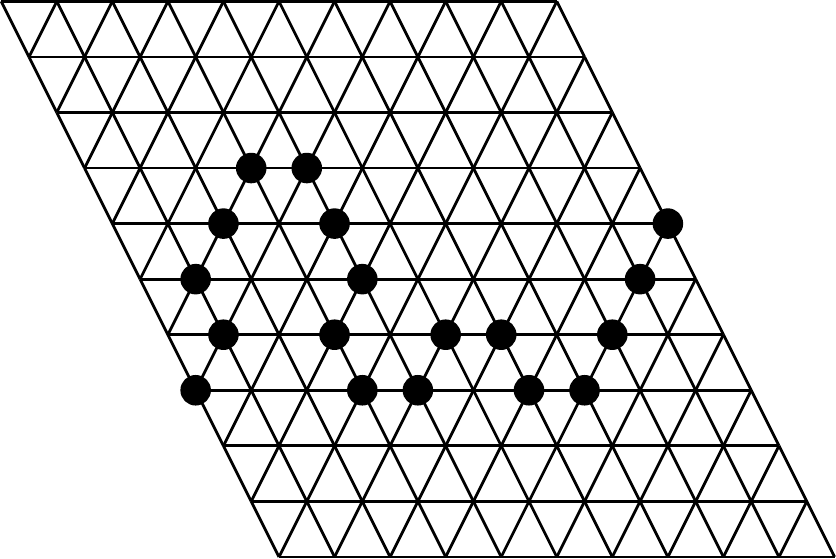}
\caption{An $11 \times 11$ board of Hex, and an example of a winning chain for Black.  We will always assume that Black is trying to connect
the right and left sides, while White is trying to connect the top and bottom sides.  In practice,
other board sizes are often used, such as $10 \times 10$ and $14 \times 14$.}
\label{hex-rules}
\end{figure}

In Figure~\ref{hex-connect}(a), Black can guarantee a connection between the two black pieces.  If White moves at one of the intervening
spaces, then Black can move at the other and complete the connection.  The configuration of Figure~\ref{hex-connect}(a) is called a \emph{bridge}.
Chains of bridges can provide guaranteed connections across long distances.  For example, in Figure~\ref{hex-connect}(b), Black can guarantee victory.

On the other hand, the connection of Figure~\ref{hex-connect}(c) is a weaker connection.  With one more move, Black can establish a bridge, and guarantee
a connection between her pieces.  However, White can also connect his pieces via a bridge with one move.
Thus whichever player moves next can control the connection.  However, when two of these configurations are place in parallel, as in
Figure~\ref{hex-connect}(d),
the result is a safe connection for Black, because if White interferes with the top connection, Black can respond by solidifying the bottom connection,
and vice versa.

\begin{figure}
\centering
\def \svgwidth{4.5in}
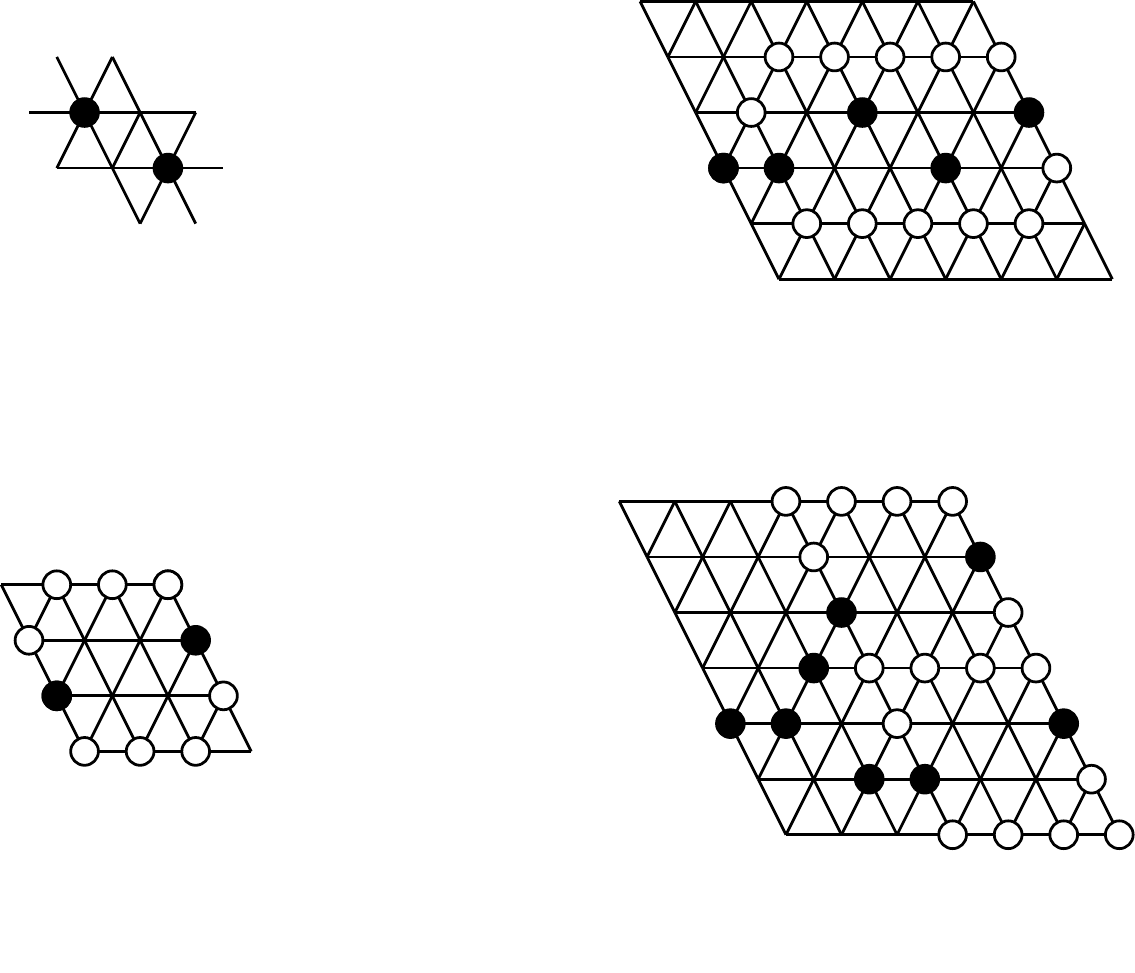
\caption{Examples of connections of various strengths.}
\label{hex-connect}
\end{figure}

These ways of chaining positions together in series or parallel correspond to the operations $\wedge$ and $\vee$ of \S\ref{sec:boolean},
if we assign a value of 1 to a successful Black connection and a value of 0 to a successful White connection.
Figure~\ref{hex-dictionary} shows a number of small positions and their values, with these conventions.

\begin{figure}
\centering
\def \svgwidth{4.5in}
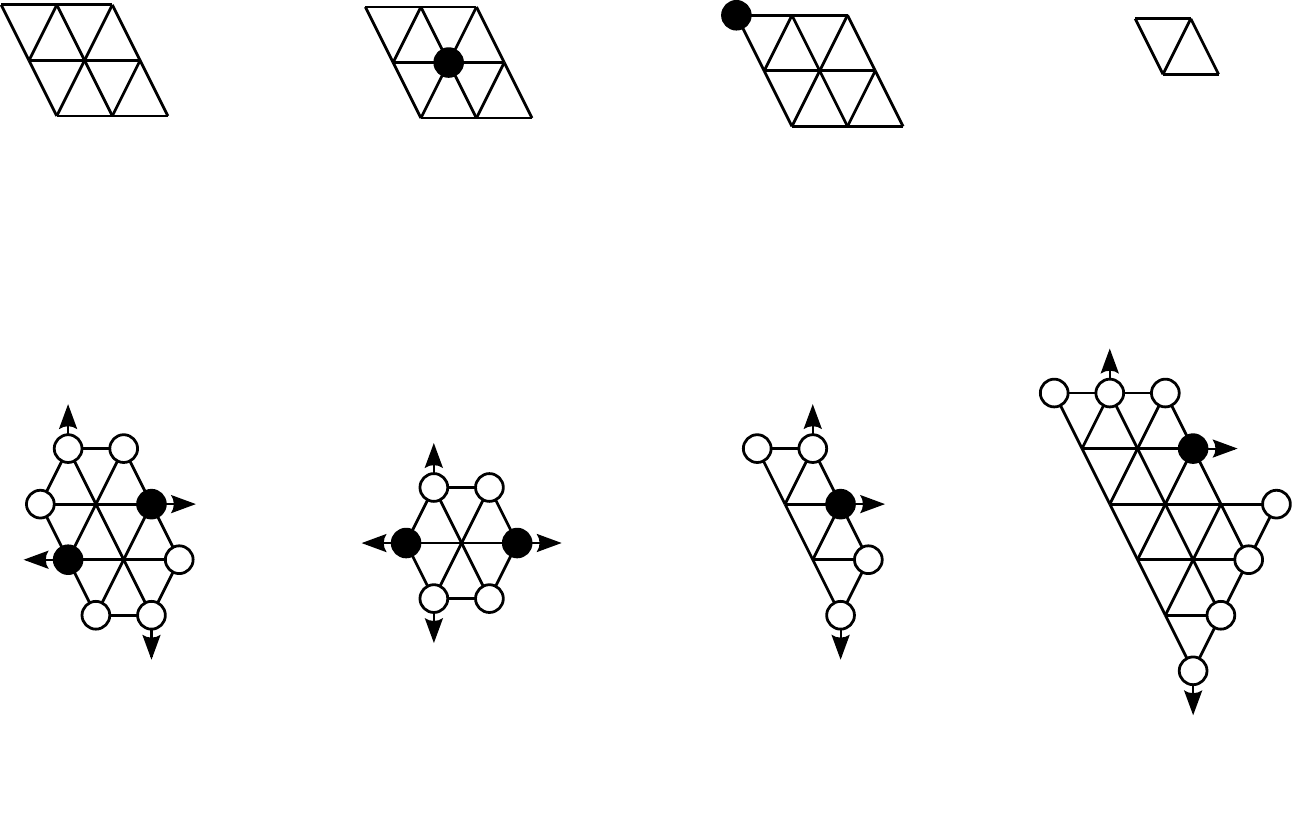
\caption{Sample positions in \textsc{Hex}, from the point of view of Black = Left = Horizontal.  Black victory is scored as 1, while White victory is scored as 0.
An arrow on a piece indicates that the piece is connected to the edge in the specified direction.}
\label{hex-dictionary}
\end{figure}

The reader will notice that the only $u$-values which occur are $0$, $1$, and $\frac{1}{2}*$, and that every value is in $\mathcal{I}$.
In other words, the only even-tempered
values are $0, 1,$ and $\{1*||*\}$ and the only odd-tempered values are $*, 1*,$ and $\{1|0\}$, and every game is already invertible.
This follows from the fact that in Hex, \emph{moving never hurts you}.  Consequently,
if $G$ is a position, then $\lout(G^L) \ge \lout(G) \ge \lout(G^R)$ and $\rout(G^L) \ge \rout(G) \ge \rout(G^R)$ for any left option
$G^L$ and right option $G^R$.  This prevents the values $\{0|1\}$ and $\{*|1*\}$ from ever occurring, and these in turn are necessary to produce
any of the other missing values.

A variant of Hex is \textsc{Misere Hex}, which is played under identical rules, except that the player who connects his two sides \emph{loses}.
As in Hex, positions can be chained together in series or parallel, and Section \ref{sec:boolean} becomes applicable.\footnote{It is unclear that
these configurations occur in actual games of Misere Hex, however.}  Assigning a value of 1 to a position with a white connection, and 0 for a position
with a black connection, some sample positions and their values are shown in Figure~\ref{misere-hex}.

\begin{figure}
\centering
\def \svgwidth{4.5in}
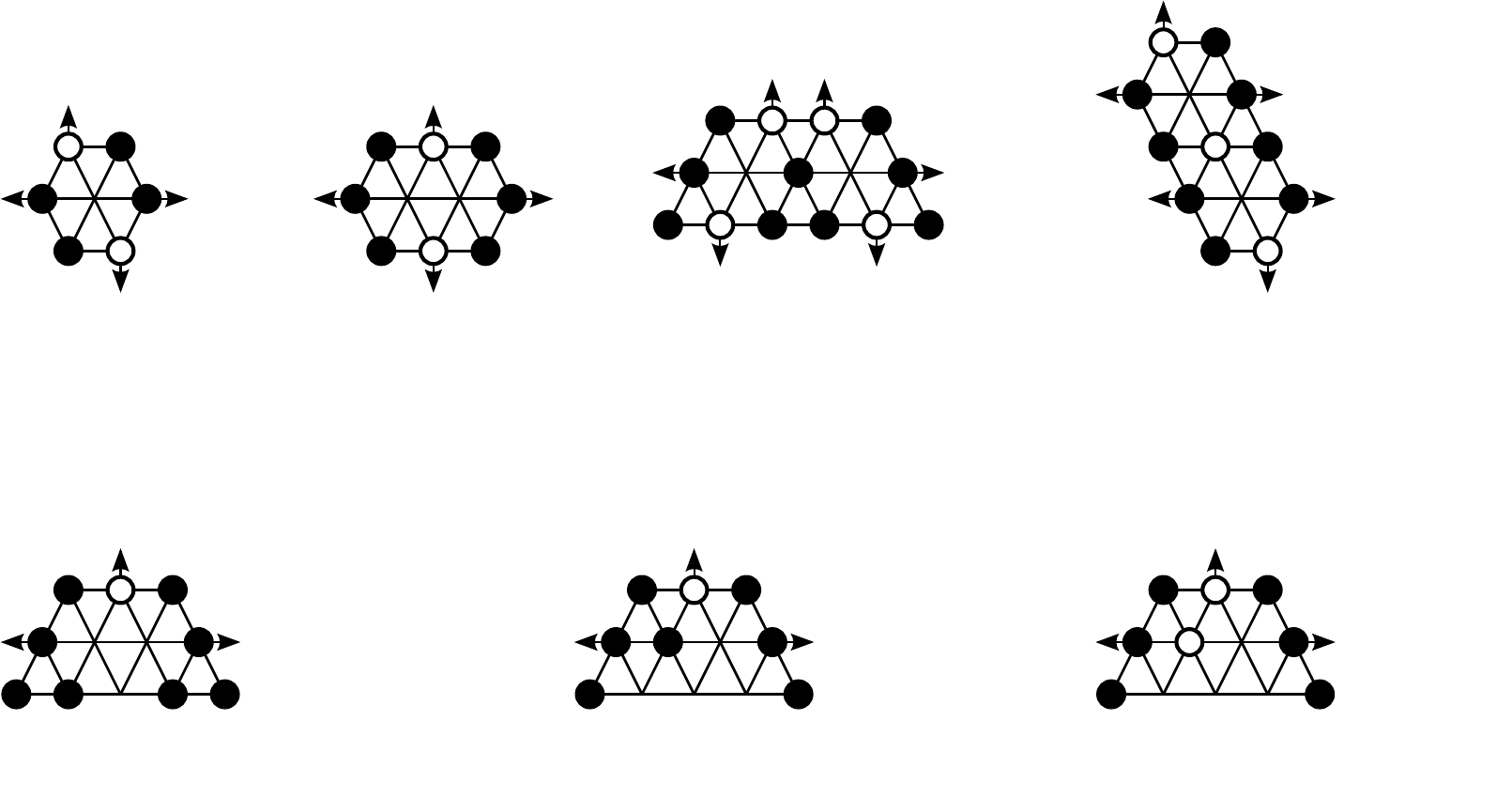
\caption{Sample positions in \textsc{Misere Hex}, from the point of view of Black = Left = Horizontal.  Black victory is scored as 1, while White victory is scored as 0.
An arrow on a piece indicates that the piece is connected to the edge in the specified direction.}
\label{misere-hex}
\end{figure}

Again, we notice a scarcity of values.  Lagarias and Sleator~\cite{Lagarias} show that $n \times n$ Misere Hex is a win for the first player when $n$ is even,
and for the second player when $n$ is odd.  Using their techniques, it is easy to show that more generally:
\begin{itemize}
\item If $G$ is an even-tempered position, then $G$ cannot be a second-player win.
\item If $G$ is an odd-tempered position, then $G$ cannot be a first-player win.
\item If $G$ is any position, then $G^L \gtrsim G + * \gtrsim G^R$ for every $G^L$ and $G^R$.
\item If $G$ is any position, then $u^+(G) = u^-(G) \in \{0,1,\frac{1}{2}\}$.  In other words, the only possible values
are
\[ 0, 1, \{0|0\}, \{1|1\}, \{0|1\}, \{0|1||0|1\}.\]
In fact, no new games can be built from these without contradicting one of the preceding bullet points.
\end{itemize}

To get more interesting values, we consider a third variant of Hex, \textsc{Rebel Hex}.
This is played the same as Hex, except for the following additional rule: whenever a piece is played directly between two enemy pieces, it changes colors.
For example in Figure~\ref{rebel-hex-rules}(a), if
White plays a piece at the $*$, it will immediately change colors to black.
On the other hand, if Black plays at $*$, her piece will remain black.
In Figure~\ref{rebel-hex-rules}(b), if either player moves at $*$, the piece will change colors.
For a color reversal to occur, the surrounding enemy pieces must be directly adjacent and on opposite
sides. Thus in Figure~\ref{rebel-hex-rules}(c), no reversal will occur if either player moves at $*$.
Moreover, playing a piece can never change the color of a piece already on the board,
so in Figure~\ref{rebel-hex-rules}(d), if Black moves at $*$, the white piece remains white, although it is now surrounded.

\begin{figure}
\centering
\def \svgwidth{3in}
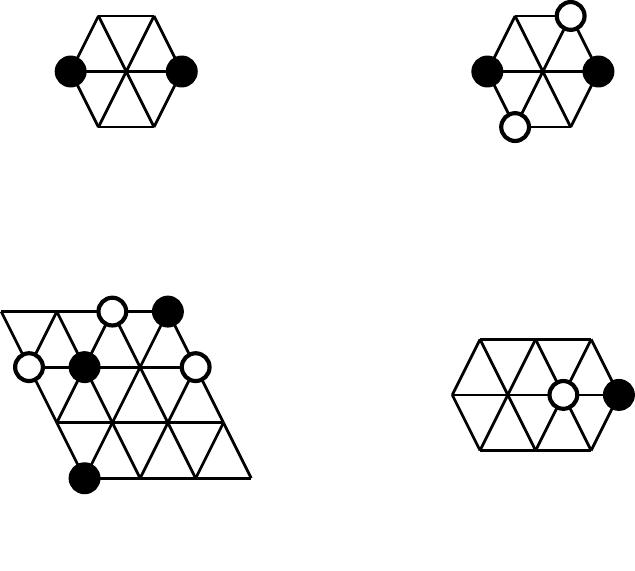
\caption{Rules of \textsc{Rebel Hex}.}
\label{rebel-hex-rules}
\end{figure}

With this new rule in place, new values can occur, as shown in Figure~\ref{rebel-hex-dictionary}.
For example, we can now have quarter-connections and $3/8$-connections.  If the players play a thousand copies
of Figure~\ref{fig:quarter-connection} in parallel and Black tries to win as many as possible, she will be able to win about 250 of them.
The significance of these partial connections is exhibited when we chain them together in series or parallel.
For example, White's $\frac{3}{4}$-strength connection in Figure~\ref{fig:quarter-connection} ensures that White will win, regardless of who moves next.
But when four copies of this configuration are placed in series, as in Figure~\ref{fig:four-quarters}, the resulting connection has strength
\[ \frac{3}{4} \cap \frac{3}{4} \cap \frac{3}{4} \cap \frac{3}{4} = 0,\]
and now Black is guaranteed a victory!
\begin{figure}
\centering
\def \svgwidth{4.5in}
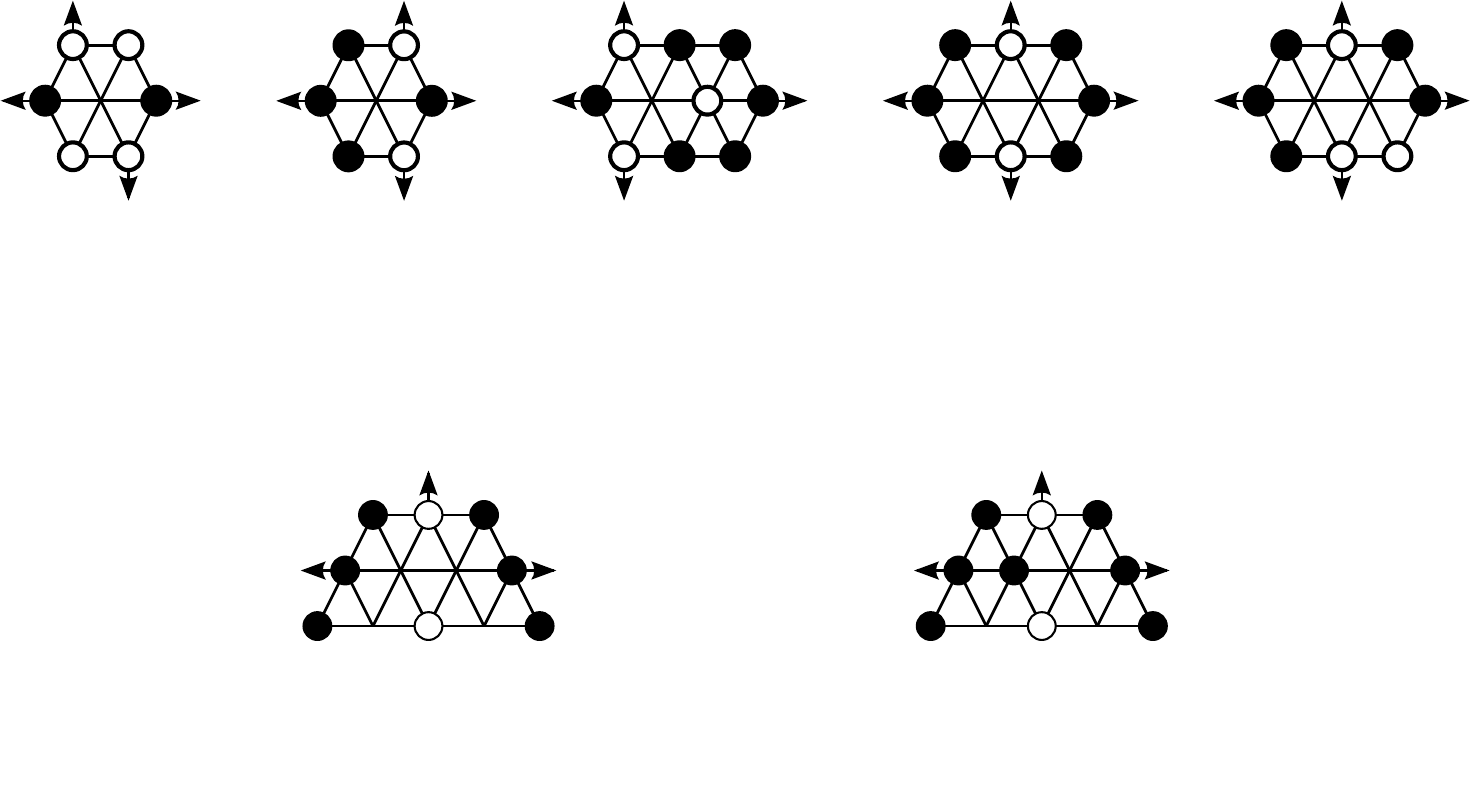
\caption{Sample positions in \textsc{Rebel Hex}, from the point of view of Black = Left = Horizontal.  Black victory is scored as 1, while White victory is scored as 0.
An arrow on a piece indicates that the piece is connected to the edge in the specified direction.  The positions on the bottom row
are assumed to be along the bottom edge of the board.  Each position is given with its value and its cooled $u$-value.}
\label{rebel-hex-dictionary}
\end{figure}

\begin{figure}
\centering
\subfloat[~]{\label{fig:quarter-connection}
\def \svgwidth{1in}
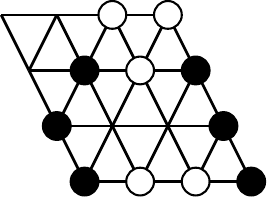}
\subfloat[~]{\label{fig:four-quarters}
\def \svgwidth{3in}
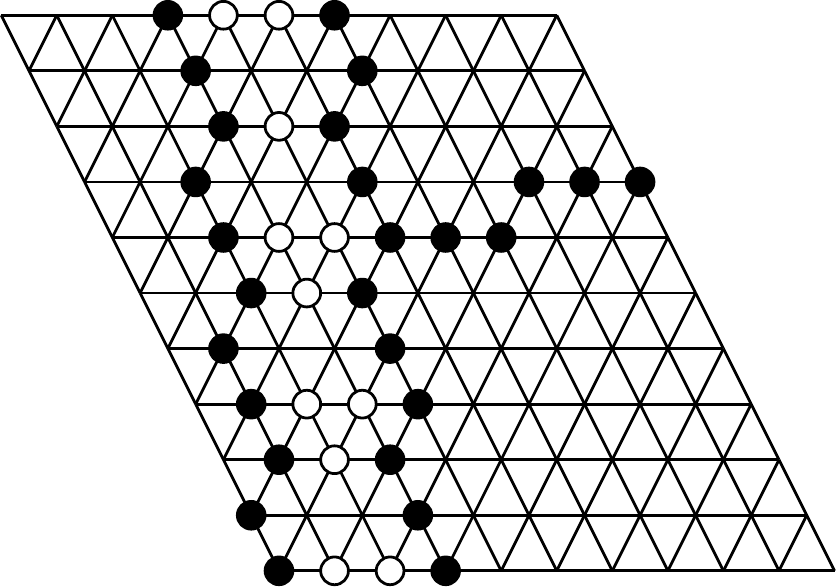}
\caption{The value of \ref{fig:quarter-connection} is $* + \{0|1||0|0\} = \{\{0|1||0|1\}|0\}$ which is a win for White no matter which player goes first.
Note that this is a $u$-value of $\frac{3}{4}$ for White, or $\frac{1}{4}$ for Black.
On the other hand, \ref{fig:four-quarters} has $u$-value $1$ for Black, or $0$ for white, because $\frac{1}{4} + \frac{1}{4} + \frac{1}{4} + \frac{1}{4} = 1$.
Therefore \ref{fig:four-quarters} is a guaranteed win for Black. }
\label{quarters}
\end{figure}

\section{Conclusion}
Sadly, none of our theory seems to generalize to scoring games which are not well-tempered.  Central notions such as $\approx_\pm$,
upsides, and downsides only make sense in a setting where the identity of the final player is predetermined, which generally does not occur.
If $G$ and $H$ are
well-tempered scoring games such that $G \approx H$, it may well be the case that $G + X$ and $H + X$ have different outcomes for some (non-well-tempered)
scoring game $X$.\footnote{One case where this does not occur is the following: if $G$ is well-tempered and
$G \gtrsim 0$, then $\lout(G + X) \ge \lout(X)$ and $\rout(G + X) \ge \rout(X)$ for all scoring games $X$, well-tempered or not.
This follows because the \emph{well-tempered} game $X = \{-N|N\}$ for $N \gg 0$ provides a sufficient test game for comparing an arbitrary game
to $0$.}

One approach to general scoring games might be to try heating them until
$\lout(H) \ge \rout(H)$ is satisfied for all subgames $H$.
There are a couple of problems with this approach.  First of all, a game like $\{*|1*\}$ will not satisfy this property
under any amount of heating, because $\{2t|0||1|-2t\}$ always has left outcome $0$ and right outcome $1$.  A bigger problem is
that after sufficient cooling,
the theory of scoring games is equivalent to the theory of misere all-small games.\footnote{If $G$ is a scoring game and $N \gg 0$, then
the left outcome of $\int^{-N} G$ is approximately $-N$ or approximately $0$, according to whether Left loses or wins $G$ played as a misere game
with Left going first.}
Since the theory of misere all-small games is somewhat hopeless, cooling
had better not be a well-defined operation on scoring games!  But if it is not, heating becomes a lossy operation, and its usefulness diminishes.
\subsection{Acknowledgments}
This work was done in part during 
REU programs at the University of Washington in 2010 and 2011, and in part while supported by the NSF Graduate Research Fellowship Program in the Autumn of 2011.
The author would like to thank Rebecca Keeping, James Morrow, Neil McKay, and Richard Nowakowski, with whom he discussed the content of this paper, as well as
Allison Henrich, who introduced the author to \textsc{To Knot or Not to Knot}, which ultimately led to this investigation.

\bibliographystyle{plain}
\bibliography{Master}{}
\end{document}